\def\supp{\mathop{\rm supp}\nolimits}
\newtheorem{theorem}{Theorem}[section]
\newtheorem{lemma}[theorem]{Lemma}
\newtheorem{corollary}[theorem]{Corollary}
\newtheorem{definition}[theorem]{Definition}
\newtheorem{remark}[theorem]{Remark}
\newtheorem{example}[theorem]{Example}
\newcommand{\N}{\mathbb{N}}
\newcommand{\R}{\mathbb{R}}
\renewenvironment{proof}[1][.]{%
\bigskip\noindent{\bf Proof#1 }}{%
\hfill$\blacksquare$\bigskip}
\newcommand{\mR}{\mathcal R}
\newcommand{\mS}{\mathcal S}
\newcommand{\mP}{\mathcal P}
\newcommand{\K}{\mathcal K}
\newcommand{\ve}{\varepsilon}
\newcommand{\on}{\operatorname}
\begin{document}
\pagestyle{myheadings}
\title{A multiresolution algorithm to approximate the Hutchinson measure for IFS and GIFS}

\author[1]{Rudnei D. da Cunha
\thanks{
Instituto de Matem\'atica e Estat\'istica - UFRGS, Av. Bento Gon\c calves 9500, 91500-900 Porto Alegre - RS - Brazil.\\
E-mail: rudnei.cunha@ufrgs.br}}
\author[1]{Elismar R. Oliveira
\thanks{E-mail: elismar.oliveira@ufrgs.br}}
\author[2]{Filip Strobin\thanks{
Instititute of Mathematics, Lodz University of Technology, W\'olcza\'nska 215, 90-924 {\L}\'od\'z, Poland.}}
\affil[1]{Universidade Federal do Rio Grande do Sul}
\affil[2]{Lodz University of Technology}

\date{\today}
\maketitle

\noindent\rule{\textwidth}{0.4mm}
\begin{abstract}
We introduce a discrete version of the Hutchinson--Barnsley theory providing algorithms to approximate the Hutchinson measure for iterated function systems (IFS) and generalized iterated function systems (GIFS), complementing the discrete version of the deterministic algorithm considered in our previous work.
\end{abstract}
\vspace {.8cm}

\emph{Key words and phrases: iterated function system (IFS),  generalized  iterated function system (GIFS), Markov operator, invariant measures, Hutchinson measures, attractor, discrete deterministic algorithm, discretization.}

\emph{2010 Mathematics Subject Classification: Primary 28A80; Secondary 37C70, 41A65, 65S05, 65P99.}

\section{Introduction}
This work is a sequel of \cite{DOS} where we approximate attractors for IFS and GIFS and their fuzzy versions by a discrete version of the deterministic algorithm.

Here we adapt the introduced theory to find a discrete version of the Markov operator acting on probability measures. Once we prove that this operator satisfies the assumptions of the discrete version of the Banach fixed point theorem (Theorem~\ref{dfp}), we justify the existence of a discrete measure approximating the Hutchinson Measure associated to the underlying IFS or GIFS with probabilities. This fits to problems addressed by several authors such as \cite{Ob1}, \cite{Ob2}, \cite{Bla}, \cite{Fro}, \cite{GN}, \cite{GMN} and \cite{CJ}. See the introduction of \cite{GMN} for a detailed review on the advances of most of these references.

 In recent years some authors have made significant advances in the task of giving an approximation of the invariant measure of a dynamical system with precisely estimated error. For example, the
strategy in \cite{GMN} is to replace the original Markov (transfer) operator, let us say $M$, by a finite rank one, let us say $\tilde{M}$. Then the authors use some powerful techniques and a functional analytic approach to approximate the actual invariant measure $\mu$ (i.e. $M(\mu)=\mu$) by the fixed point $\tilde{\mu}$ of $\tilde{M}$. Our approach is different: modifying the original IFS $\mS$ by replacing its maps with their certain discretizations defined on appropriately dense grids, we obtain a new IFS $\hat{S}$, whose Markov operator $\hat{M}$  need not be contractive (see Example \ref{exxx}), but its iterations give approximations of the invariant measure of $\mS$.
Not only we present an algorithm that approximates the invariant measure for an IFS with probabilities but,  similarly as in \cite{GMN}, we provide an explicit control on the convergence and the error estimation. The main novelty is the fact that we can apply this procedure even for generalized iterated function systems which have no dynamical counterpart, presenting, as far as we know, the first rigorous computation of the invariant measures in the literature, including place depending probabilities. The advantages of our approach (discretization of the space and of the associated operators)
 is that we obtain a very simple algorithm to be coded exhibiting an outstanding performance for computation without the need of a sophisticated computational machinery. Moreover, the lack of requirements allow us to extrapolate the application of the algorithm to investigate (heuristically) other situations where our hypothesis fail, for instance as in Example~\ref{ex_conze} where the convergence can not happen since we have more than one invariant measures.

 In Example~\ref{ex:galatolo_final} we make a comparative analysis  between our algorithm and the method employed in \cite{GMN} for a contracting case. Also, in Section~\ref{sec:Further applications}, subsection~\ref{sec:approx_integrals}, we make a comparison between our computations and with the computational results given in a recent preprint~\cite{CJ}. It is worth to mention that \cite{CJ} follows a very different approach to the problem: the maps are also assumed to be Lipschitz contractions but the authors use an operator theoretic approach similar to that given in \cite{JP} where the family of maps must be \emph{complex contracting} leading to a good numerical results.

The paper is organized as follows:\\
In Section~\ref{sec:basics hutc barnsley} we recall some basic facts on the Hutchinson--Barnsley theory and its certain generalization due to Miculescu and Mihail. Not only we formulate the Hutchinson-Barnsley theorem on the existence of attractors, but also, we recall definition of (generalized) Markov operator and provide theorems on the existence the Hutchinson measure.

After, in Section~\ref{sec:discret fix point thm},  we present the theory developed in \cite{DOS}. We formulate a discrete version of the Banach fixed point theory and present its application -  discretizations of the Hutchinson-Barnsley theorem, as well as its generalization due to Miculescu and Mihail.

In Section 4 we obtain the main result of our paper, which shows that we can approximate the Hutchinson measure of given IFS (or GIFS) by appropriate discrete measure, obtained by iterating process of discrete Markov operator.

In Section~\ref{sec:discret determ algor for hutch} we introduce an algorithm that can be used to generate discrete Hutchinson measures (for IFSs and GIFSs) and also we present some examples to illustrate it. Note that our algorithm draw the pictures of approximations of the attractors both of IFSs and GIFSs. Other such algorithms for GIFSs can be found in \cite{JMS} and \cite{MM3}.

Finally in Section~\ref{sec:Further applications} we address the problem of estimating the integral of functions with respect to the Hutchinson measures from \cite{CJ}, the problem of the Projected Hutchinson measures from \cite{EO} and the problem of IFS/GIFS with place dependent probabilities from \cite{Mi}.

Note that some results we recall in the initial sections (for example, Lemmas \ref{lem3} and \ref{lem3g}, and Sections 3.2 and 3.3) are given just for the sake of completeness and for the reader's convenience.

\section{Basics of the Hutchinson--Barnsley theory and its certain generalization}\label{sec:basics hutc barnsley}
\subsection{Iterated function systems and the Hutchinson--Barnsley theorem}
Let $(X,d)$ be a  metric space. We say that $f:X\to X$ is a \emph{Banach contraction}, if the Lipschitz constant $\on{Lip}(f)<1$.
\begin{definition}\emph{
An }iterated function system\emph{ (IFS in short)  $\mS=(X,(\phi_j)_{j=1}^{L})$ consists of a finite family  $\phi_1,...,\phi_L$ of continuous selfmaps of $X$. Each IFS $\mS$ generates the map $F_\mS:\K(X)\to\K(X)$ (where $\K(X)$ denotes the family of all nonempty and compact subsets of $X$), called }the Hutchinson operator\emph{, defined by
$$
\forall_{K\in\K(X)}\;F_\mS(K):=\bigcup_{j=1}^L\phi_j(K).
$$
  By an \emph{attractor} of an IFS $\mS$ we mean a (necessarily unique) set} $A_\mS\in \K(X)$ which satisfies
$$
A_\mS=F_\mS(A_\mS)=\bigcup_{j=1}^L\phi_j(A_\mS)
$$
and such that for every $K\in\K(X)$, the sequence of iterates $(F_\mS^k(K))_{k=0}^{\infty}$ converges to $A_\mS$ with respect to the Hausdorff-Pompeiu metric $h$ on $\K(X)$(see \cite{BM} for details on the Hausdorff-Pompeiu metric).
\end{definition}
The classical Hutchinson--Barnsley theorem \cite{Bar}, \cite{Hut} states that:
\begin{theorem}\label{H-B}
Each IFS $\mS$ consisting of Banach contractions on a complete metric space $X$ admits attractor.
\end{theorem}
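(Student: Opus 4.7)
The plan is to apply the classical Banach fixed point theorem to the Hutchinson operator $F_\mS$ acting on the hyperspace $(\K(X),h)$. This reduces the statement to three standard facts: (i) $(\K(X),h)$ is a complete metric space whenever $(X,d)$ is complete; (ii) $F_\mS$ genuinely maps $\K(X)$ into itself; (iii) $F_\mS$ is a Banach contraction on $(\K(X),h)$ with constant at most $s:=\max_{j=1,\ldots,L}\on{Lip}(\phi_j)<1$. Once these are in place, the Banach fixed point theorem yields a unique $A_\mS\in\K(X)$ with $F_\mS(A_\mS)=A_\mS$ and, for every $K\in\K(X)$, the iterates $F_\mS^k(K)$ converge to $A_\mS$ in the Hausdorff--Pompeiu metric, which is exactly the definition of an attractor stated above.

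For (i) I would simply cite the classical fact (e.g.\ from \cite{BM}) that completeness of $(X,d)$ transfers to $(\K(X),h)$; no new argument is needed. For (ii) I would note that each $\phi_j$ is continuous, so $\phi_j(K)$ is compact whenever $K$ is compact, and a finite union of nonempty compact sets is nonempty and compact; hence $F_\mS(K)\in\K(X)$ for $K\in\K(X)$.

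The main technical point is (iii). I would establish it by combining two elementary estimates. First, for any continuous map $\phi:X\to X$ with $\on{Lip}(\phi)<\infty$ and any $A,B\in\K(X)$, one has
\[
h(\phi(A),\phi(B))\le \on{Lip}(\phi)\, h(A,B),
\]
which follows directly from the definition of $h$ via suprema of distances. Second, the Hausdorff--Pompeiu metric behaves well with respect to finite unions: for $A_1,\ldots,A_L,B_1,\ldots,B_L\in\K(X)$,
\[
h\!\left(\bigcup_{j=1}^L A_j,\bigcup_{j=1}^L B_j\right)\le \max_{j=1,\ldots,L} h(A_j,B_j),
\]
again an easy consequence of the $\inf/\sup$ characterization of $h$. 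Applying these to $A_j=\phi_j(K)$ and $B_j=\phi_j(K')$ gives $h(F_\mS(K),F_\mS(K'))\le s\, h(K,K')$ with $s<1$, which is the desired contraction.

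The only step that carries any real content is (iii), and even that reduces to the two inequalities above; the potential pitfall is merely bookkeeping with the two equivalent formulations of $h$ (as a max of two one-sided Hausdorff distances, or via $\varepsilon$-neighborhoods). After (iii) the conclusion is immediate from the Banach fixed point theorem: uniqueness of the fixed point gives the uniqueness of $A_\mS$, and the contraction property gives the geometric convergence $h(F_\mS^k(K),A_\mS)\le s^k h(K,A_\mS)$ for every starting $K\in\K(X)$.
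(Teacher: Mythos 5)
Your proposal is correct and follows exactly the route the paper indicates: it invokes the Banach fixed point theorem on the complete hyperspace $(\K(X),h)$ together with the fact (the paper's Lemma~\ref{lem3}) that $F_\mS$ is a Banach contraction with $\on{Lip}(F_\mS)\leq\max_j\on{Lip}(\phi_j)$. The two union/Lipschitz estimates you supply are precisely the standard ingredients behind that lemma, so there is nothing to add.
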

This result can be proved with a help of the Banach fixed point theorem as it turns out that $F_\mS$ is a Banach contraction provided each $\phi_j$ is a Banach contraction   (see, for example, \cite[Section 3.2]{Hut}):
\begin{lemma}\label{lem3}
Let $(X,d)$ be a metric space and $\mS=(X,(\phi_j)_{j=1}^{L})$  be an IFS consisting of Banach contractions. Then $F_\mS$ is a Banach contraction and $\on{Lip}(F_\mS)\leq\max\{\on{Lip}(\phi_j):j=1,...,L\}$.
\end{lemma}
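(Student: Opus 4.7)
The plan is standard: reduce the estimate on $h(F_\mathcal{S}(K_1), F_\mathcal{S}(K_2))$ to two elementary facts about the Hausdorff--Pompeiu metric, namely (i) that Lipschitz maps do not expand $h$ more than their Lipschitz constant, and (ii) that taking finite unions is $h$-nonexpansive in the max sense. Once these are in hand, the claim follows by one direct chain of inequalities.

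First I would fix $K_1, K_2 \in \mathcal{K}(X)$ and set $c := \max\{\on{Lip}(\phi_j) : j=1,\dots,L\} < 1$. The first lemma to isolate is: for any continuous Lipschitz $\phi : X \to X$ and any $A,B \in \mathcal{K}(X)$, one has $h(\phi(A),\phi(B)) \leq \on{Lip}(\phi)\cdot h(A,B)$. This is immediate from the two-sided definition of $h$ (sup over $A$ of distance to $B$, and vice versa), using that $d(\phi(a),\phi(B)) \leq \on{Lip}(\phi)\cdot d(a,B)$ pointwise. The second lemma is the ``union inequality'': for families $A_1,\dots,A_L$ and $B_1,\dots,B_L$ in $\mathcal{K}(X)$,
\[
h\Bigl(\bigcup_{j=1}^L A_j, \bigcup_{j=1}^L B_j\Bigr) \leq \max_{j=1,\dots,L} h(A_j,B_j),
\]
which one verifies by observing that any point of $\bigcup_j A_j$ lies in some $A_{j_0}$ and hence has distance to $\bigcup_j B_j$ at most $d(\,\cdot\,, B_{j_0}) \leq h(A_{j_0},B_{j_0})$, and symmetrically.

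Combining the two, I would write
\[
h(F_\mathcal{S}(K_1), F_\mathcal{S}(K_2)) = h\Bigl(\bigcup_{j=1}^L \phi_j(K_1), \bigcup_{j=1}^L \phi_j(K_2)\Bigr) \leq \max_{j} h(\phi_j(K_1),\phi_j(K_2)) \leq \max_j \on{Lip}(\phi_j)\cdot h(K_1,K_2) = c\cdot h(K_1,K_2).
\]
Since $c < 1$, this shows $F_\mathcal{S}$ is a Banach contraction with $\on{Lip}(F_\mathcal{S}) \leq c$, which is exactly the asserted bound.

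There is no real obstacle here; the only mildly delicate point is verifying the union inequality carefully (the asymmetry between the two sup's in the definition of $h$ can trip up a quick write-up). I would also briefly remark that $F_\mathcal{S}$ is well-defined as a self-map of $\mathcal{K}(X)$ because each $\phi_j$ is continuous and a finite union of nonempty compact sets is nonempty and compact, so the conclusion lives in the correct space.
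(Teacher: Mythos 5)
Your argument is correct and is exactly the standard one: the paper itself gives no proof of Lemma~\ref{lem3}, only a citation to Hutchinson, and the cited proof proceeds precisely via your two ingredients (Lipschitz maps contract the Hausdorff--Pompeiu metric by their Lipschitz constant, and finite unions are nonexpansive in the max sense). Nothing is missing; the chain of inequalities and the well-definedness remark are all that is needed.
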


Given an IFS $\mS=(X,(\phi_j)_{j=1}^L)$ consisting of Banach contractions, we will denote $$\alpha_\mS:=\max\{\on{Lip}(\phi_1),...,\on{Lip}(\phi_L)\}.$$

\subsection{IFSs with probabilities and the Hutchinson measure}
For a metric space $(X,d)$, by $\mP(X)$ we denote the family of all nonnegative Borel probability measures $\mu$ with compact support, that is, for which there exists a compact set $K$ so that $\mu(X\setminus K)=0$. For every $\mu,\nu\in\mP(X)$, define
\begin{equation}
d_{MK}(\mu, \nu)=\sup \left\{\left|\int_{X} f \mathrm{d} \mu-\int_{X} f \mathrm{d} \nu\right| : f \in \operatorname{Lip}_{1}(X, \mathbb{R})\right\},
\end{equation}
where $\operatorname{Lip}_{1}(X, \mathbb{R})$ is the set of maps $f:X\to\R$ with $\on{Lip}(f)\leq 1$. It is known (see \cite{Bog},\cite{KTMV},\cite{Hut}, \cite{Bar}) that $d_{MK}$ is a metric, and it is complete provided $(X,d)$ is complete. The metric $d_{MK}$ is called the \emph{Monge-Kantorovich metric} or the \emph{Hutchinson metric}. \\
For a Borel measurable map $w:X\to Y$ (where $Y$ is a metric space) and $\mu\in\mP(X)$, by $w^\sharp\mu$ we denote the measure on $Y$ defined by
\begin{equation}
w^\sharp\mu(B):=\mu(w^{-1}(B)).
\end{equation}
It is known that $w^\sharp\mu\in\mP(Y)$ and that for any continuous map $f:Y\to\R$ and every $\mu\in\mP(X)$,
\begin{equation}\label{filip1}
\int_{X}f \;dw^\sharp\mu=\int_Xf\circ w\;d\mu.
\end{equation}

\begin{definition}
\emph{By an }IFS with probabilities (IFSp in short) \emph{we mean a triple $\mS=(X,(\phi_j)_{j=1}^L,(p_j)_{j=1}^L)$ so that $(X,(\phi_j)_{j=1}^L)$ is an IFS and $p_1,...,p_L>0$ with $\sum_{j=1}^Lp_j=1$.\\
Each IFSp generates the map $M_\mS:\mP(X)\to\mP(X)$, called the} Markov operator, \emph{which adjust to every $\mu\in\mP(X)$, the measure $M_\mS(\mu)$ defined by:
\begin{equation*}
M_{\mathcal{S}}(\mu)(B)=\sum_{j=1}^{L} p_{j} \mu\left(\phi_{j}^{-1}(B)\right)=\sum_{j=1}^Lp_j\phi_j^\sharp\mu, \quad
\end{equation*}
for any Borel set $B\subset X$.\\
By  a Hutchinson measure \emph{of an IFSp $\mS$ we mean  a (necessarily unique)} measure $\mu_\mS\in\mP(X)$ which satisfies
\begin{equation}
\mu_\mS=M_\mS(\mu_\mS)
\end{equation}
and such that for every $\mu\in\mP(X)$, the sequence of iterates $M^k_\mS(\mu)$ converges to $\mu_\mS$ with respect to $d_{MK}$.}
\end{definition}
Observe that by (\ref{filip1}), for every IFSp $\mS$ and every continuous map $f:X\to\R$,
 \begin{equation}
\int_{X}f \;dM_\mS(\mu)=\sum_{j=1}^Lp_j\int_Xf\circ \phi_j\;d\mu.
\end{equation}
In fact, the above formula characterizes the Markov operator $M_\mS$.
The following result is known (see mentioned papers,   for example \cite[Section 4.4]{Hut}).
\begin{theorem}
Each   IFSp on a complete metric space consisting of Banach contractions admits the Hutchinson measure.
\end{theorem}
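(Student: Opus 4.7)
The plan is to prove the theorem by showing that the Markov operator $M_\mS$ is a Banach contraction on the complete metric space $(\mP(X), d_{MK})$ with contraction ratio at most $\alpha_\mS$, and then invoking the Banach fixed point theorem. The existence of the Hutchinson measure $\mu_\mS$ as a fixed point together with the convergence of iterates $M_\mS^k(\mu) \to \mu_\mS$ in $d_{MK}$ both follow immediately from that theorem.

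First I would check that $M_\mS$ indeed maps $\mP(X)$ into itself. Clearly $M_\mS(\mu)(X) = \sum_j p_j \mu(\phi_j^{-1}(X)) = \sum_j p_j = 1$, so $M_\mS(\mu)$ is a Borel probability measure. For the compact support condition, if $K$ is a compact set with $\mu(X\setminus K)=0$, then $\bigcup_j \phi_j(K)$ is compact (finite union of continuous images of compacts) and the push-forward formulation makes it routine to see that $M_\mS(\mu)$ is supported on this set.

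The heart of the argument is the contraction estimate, which I would derive using the characterization of $M_\mS$ via integration. For $\mu,\nu \in \mP(X)$ and $f \in \on{Lip}_1(X,\R)$, using the formula $\int f\, dM_\mS(\mu) = \sum_j p_j \int f\circ\phi_j\, d\mu$, I compute
\begin{equation*}
\left|\int f\, dM_\mS(\mu) - \int f\, dM_\mS(\nu)\right| \leq \sum_{j=1}^L p_j \left|\int f\circ\phi_j\, d\mu - \int f\circ\phi_j\, d\nu\right|.
\end{equation*}
Since $\on{Lip}(f\circ\phi_j) \leq \on{Lip}(f)\on{Lip}(\phi_j) \leq \alpha_\mS$, the scaled function $(f\circ\phi_j)/\alpha_\mS$ lies in $\on{Lip}_1(X,\R)$ (assuming $\alpha_\mS>0$; otherwise all $\phi_j$ are constant and the result is trivial). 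Therefore the inner difference is bounded by $\alpha_\mS\, d_{MK}(\mu,\nu)$, and summing with $\sum_j p_j = 1$ gives
\begin{equation*}
\left|\int f\, dM_\mS(\mu) - \int f\, dM_\mS(\nu)\right| \leq \alpha_\mS\, d_{MK}(\mu,\nu).
\end{equation*}
Taking the supremum over $f \in \on{Lip}_1(X,\R)$ yields $d_{MK}(M_\mS(\mu),M_\mS(\nu)) \leq \alpha_\mS\, d_{MK}(\mu,\nu)$.

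The main conceptual obstacle is ensuring the metric space on which we apply Banach's theorem is both correct and complete; this is why I would explicitly invoke the completeness of $(\mP(X), d_{MK})$ recalled earlier, and be careful that the class $\mP(X)$ (compactly supported probability measures) is preserved by $M_\mS$. With that in hand, $\alpha_\mS<1$ makes $M_\mS$ a Banach contraction on a complete metric space, so the Banach fixed point theorem delivers a unique fixed point $\mu_\mS$ with $M_\mS^k(\mu) \to \mu_\mS$ for every $\mu \in \mP(X)$, which is exactly the Hutchinson measure.
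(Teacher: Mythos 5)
Your proposal is correct and follows essentially the same route as the paper: the paper reduces the theorem to Lemma~\ref{rnf} (that $M_\mS$ is a Banach contraction with $\on{Lip}(M_\mS)\leq\alpha_\mS$) together with the Banach fixed point theorem on the complete space $(\mP(X),d_{MK})$, which is exactly the contraction estimate you derive via the integral characterization and $\on{Lip}_1$ test functions. The only difference is that the paper cites this lemma from the literature rather than proving it, whereas you supply the standard argument, including the routine but worthwhile check that $M_\mS$ preserves compact supports.
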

This result can be proved with a help of the Banach fixed point theorem as the following lemma holds:
\begin{lemma}\label{rnf}
If an IFSp $\mS$ consists of Banach contractions, then $M_\mS$ is a Banach contraction and $\on{Lip}(M_\mS)\leq\alpha_{\mS}$.
\end{lemma}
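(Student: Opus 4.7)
The plan is to unfold the definition of $d_{MK}$ and use the integral characterization of $M_\mS$ displayed just before the lemma, then exploit that Lipschitz constants behave well under composition with Banach contractions.

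First I would fix $\mu,\nu\in\mP(X)$ and let $f\in\lip_1(X,\R)$ be arbitrary. Using the identity
\[
\int_X f\,dM_\mS(\mu)=\sum_{j=1}^L p_j\int_X f\circ\phi_j\,d\mu,
\]
and the analogous one for $\nu$, I would estimate
\[
\left|\int_X f\,dM_\mS(\mu)-\int_X f\,dM_\mS(\nu)\right|\leq \sum_{j=1}^L p_j\left|\int_X f\circ\phi_j\,d\mu-\int_X f\circ\phi_j\,d\nu\right|
\]
by the triangle inequality, together with the convexity coming from $\sum p_j=1$ and $p_j>0$.

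Next I would observe that for each $j$, the map $f\circ\phi_j$ has $\on{Lip}(f\circ\phi_j)\leq \on{Lip}(f)\cdot\on{Lip}(\phi_j)\leq \alpha_\mS$. Assuming the nontrivial case $\alpha_\mS>0$ (otherwise every $\phi_j$ is constant and the inequality is immediate), the rescaled function $g_j:=(f\circ\phi_j)/\alpha_\mS$ lies in $\lip_1(X,\R)$, so by definition of $d_{MK}$,
\[
\left|\int_X f\circ\phi_j\,d\mu-\int_X f\circ\phi_j\,d\nu\right|=\alpha_\mS\left|\int_X g_j\,d\mu-\int_X g_j\,d\nu\right|\leq \alpha_\mS\, d_{MK}(\mu,\nu).
\]
Substituting back and using $\sum_j p_j=1$ yields an upper bound of $\alpha_\mS\, d_{MK}(\mu,\nu)$ uniformly in $f\in\lip_1$. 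Taking the supremum over $f$ gives $d_{MK}(M_\mS(\mu),M_\mS(\nu))\leq \alpha_\mS\, d_{MK}(\mu,\nu)$, so $M_\mS$ is a Banach contraction with $\on{Lip}(M_\mS)\leq \alpha_\mS<1$.

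There is no real obstacle here; the only subtle point is the legitimacy of the pushforward computations, which is granted by equation (\ref{filip1}) and its consequence displayed just before the lemma, and the minor bookkeeping about the degenerate case $\alpha_\mS=0$. The argument is a clean weighted version of the standard proof that a single Banach contraction $\phi$ induces a contraction $\mu\mapsto \phi^\sharp\mu$ on $(\mP(X),d_{MK})$ with the same Lipschitz constant.
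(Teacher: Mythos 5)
Your proof is correct and is the standard argument: the paper itself does not actually write out a proof of Lemma~\ref{rnf} (it is treated as folklore, with the $m=1$ case attributed to \cite{KTMV} inside the inductive proof of the generalized $(a_0,\dots,a_{m-1})$-contraction lemma that follows), but your route --- the integral characterization of $M_\mS$, the weighted triangle inequality over $j$, and rescaling $f\circ\phi_j$ by $\alpha_\mS$ so that it lands back in $\operatorname{Lip}_1(X,\mathbb{R})$ --- is exactly the $m=1$ specialization of that inductive argument. Your explicit handling of the degenerate case $\alpha_\mS=0$ is a small extra point of rigor that the folklore version usually omits.
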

For a measure $\mu\in\mP(X)$, by its support we mean the set
$$
\on{supp}(\mu)=\{x\in X:\mu(U)>0\;\mbox{for all open sets}\;U\ni x\}.
$$
Alternatively, $\on{supp}(\mu)$ is the largest compact set $C$ such that $\mu(U)>0$ for every open set $U$ with $U\cap C\neq \emptyset$.
The following result shows that there is a strong relationship between the Markov and Hutchinson operators. It implies that the support of the Hutchinson measure is exactly the attractor for a given IFS  (see \cite[Section 4.4, Thm. 4]{Hut}).
\begin{lemma}\label{filip5}
In the above frame, for every $\mu\in\mP(X)$, $\on{supp}(M_\mS(\mu))=F_\mS(\on{supp}(\mu))$.
\end{lemma}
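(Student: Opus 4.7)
The strategy is to reduce the identity $\on{supp}(M_\mS(\mu))=F_\mS(\on{supp}(\mu))$ to two independent ingredients: (a) the behavior of the support under the pushforward by a single continuous map, and (b) the behavior of the support under finite positive linear combinations of measures. Combining (a) with $F_\mS(\on{supp}(\mu))=\bigcup_{j=1}^L\phi_j(\on{supp}(\mu))$ and (b) applied to $M_\mS(\mu)=\sum_{j=1}^L p_j\phi_j^\sharp\mu$ will immediately give the result.

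\textbf{Step 1: pushforward.} Set $K=\on{supp}(\mu)$ and fix $j$. I would prove $\on{supp}(\phi_j^\sharp\mu)=\phi_j(K)$ as follows. First, $\phi_j(K)$ is compact by continuity of $\phi_j$, and since $K\subseteq \phi_j^{-1}(\phi_j(K))$, we have $\phi_j^\sharp\mu(X\setminus\phi_j(K))=\mu(\phi_j^{-1}(X\setminus\phi_j(K)))\le\mu(X\setminus K)=0$, so $\on{supp}(\phi_j^\sharp\mu)\subseteq\phi_j(K)$. For the reverse inclusion, take $y=\phi_j(x)\in\phi_j(K)$ and any open $U\ni y$; then $\phi_j^{-1}(U)$ is open, contains $x\in K=\on{supp}(\mu)$, so $\phi_j^\sharp\mu(U)=\mu(\phi_j^{-1}(U))>0$, giving $y\in\on{supp}(\phi_j^\sharp\mu)$.

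\textbf{Step 2: finite convex combinations.} I would verify that for any finite family $\nu_1,\dots,\nu_L\in\mP(X)$ and positive weights $p_j$ summing to one, $\on{supp}\left(\sum_{j=1}^L p_j\nu_j\right)=\bigcup_{j=1}^L\on{supp}(\nu_j)$. The inclusion $\supseteq$ is immediate: if $x\in\on{supp}(\nu_j)$ and $U\ni x$ is open, then $\sum_k p_k\nu_k(U)\ge p_j\nu_j(U)>0$. For $\subseteq$, if $x\notin\bigcup_j\on{supp}(\nu_j)$, then for each $j$ pick an open $U_j\ni x$ with $\nu_j(U_j)=0$; the finite intersection $U=\bigcap_j U_j$ is open, contains $x$, and has $\sum_j p_j\nu_j(U)=0$.

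\textbf{Step 3: assembling.} Applying Step 2 to $\nu_j=\phi_j^\sharp\mu$ and then Step 1, one obtains
\[
\on{supp}(M_\mS(\mu))=\bigcup_{j=1}^L\on{supp}(\phi_j^\sharp\mu)=\bigcup_{j=1}^L\phi_j(\on{supp}(\mu))=F_\mS(\on{supp}(\mu)),
\]
and compactness of the right side is automatic (finite union of continuous images of a compact set). I do not anticipate a genuine obstacle; the only delicate point is making sure the characterization of $\on{supp}$ used (no open neighbourhood of $x$ has measure zero) is applied consistently, in particular using continuity of the $\phi_j$ to guarantee that preimages of open sets are open in Step 1.
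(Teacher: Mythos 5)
Your proof is correct. It is worth noting that the paper does not actually prove Lemma~\ref{filip5} itself --- it is quoted from Hutchinson --- and the only proof of this type given in the text is for the GIFS generalization, Lemma~\ref{filip5a}, which is done as a single two-inclusion argument by contraposition: one takes a neighbourhood $U$ of a point outside one of the two sets and derives a contradiction, using that preimages of open sets under the $\phi_j$ are open and that $\overline{F}_\mS(\on{supp}(\mu))$ is closed. Your route factors the same underlying facts into two reusable sub-lemmas, $\on{supp}(\phi^\sharp\mu)=\phi(\on{supp}(\mu))$ for a single continuous map and $\on{supp}\bigl(\sum_j p_j\nu_j\bigr)=\bigcup_j\on{supp}(\nu_j)$ for finite positive combinations, which is cleaner and more modular for the $m=1$ case; to recover the paper's Lemma~\ref{filip5a} along the same lines you would additionally need $\on{supp}(\mu\times\dots\times\mu)=\on{supp}(\mu)\times\dots\times\on{supp}(\mu)$, which is precisely the extra ingredient the paper invokes there. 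The one point you use silently (as does the paper) is that $\mu(X\setminus\on{supp}(\mu))=0$; for the compactly supported Borel measures of $\mP(X)$ this is standard (the carrier is compact, hence Lindel\"of, so the open cover of $\on{supp}(\mu)$'s complement by null neighbourhoods reduces to a countable one), but it deserves a word since without some such hypothesis it can fail.
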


\subsection{Generalized IFSs in the sense of Miculescu and Mihail}
Here we recall some basics of a generalization of the classical IFS theory introduced by R. Miculescu and A. Mihail in 2008. For references, see \cite{M1}, \cite{MM}, \cite{MM1}, \cite{SS1} and references therein.\\
If $(X,d)$ is a metric space and $m\in\N$, then by $X^m$ we denote the Cartesian product of $m$ copies of $X$. We consider it as a metric space with the maximum metric
$$
d_m((x_0,...,x_{m-1}),(y_0,...,y_{m-1})):=\max\{d(x_0,y_0),...,d(x_{m-1},y_{m-1})\}.
$$
A map $f:X^m\to X$ is called a \emph{generalized Banach contraction}, if $\on{Lip}(f)<1$.

It turns out that a counterpart of the Banach fixed point theorem holds. Namely,
if $f:X^m\to X$ is a generalized Banach contraction, then there is a unique point $x_*\in X$ (called \emph{a generalized fixed point} of $f$), such that $f(x_*,...,x_*)=x_*$. Moreover, for every $x_0,...,x_{m-1}\in X$, the sequence $(x_k)$ defined by $$x_{k+m}=f(x_k,...,x_{k+m-1}),\;\;k\geq 0,$$
converges to $x_*$.
\begin{remark}\label{filip2}\emph{
It is worth to observe that if $f:X^m\to X$ is a generalized Banach contraction, then the map $\tilde{f}:X\to X$ defined by $\tilde{f}(x):=f(x,...,x)$, is a Banach contraction and $\on{Lip}(\tilde{f})\leq \on{Lip}(f)$.}
\end{remark}

\begin{definition}\emph{
A }generalized iterated function system of order $m$\emph{ (GIFS in short) $\mS=(X,(\phi_j)_{j=1}^{L})$ consists of a finite family  $\phi_1,...,\phi_L$ of continuous maps from $X^m$ to $X$. Each GIFS $\mS$ generates the map $F_\mS:\K(X)^m\to\K(X)$, called }the generalized Hutchinson operator \emph{, defined by
$$
\forall_{K_0,...,K_{m-1}\in \K(X)}\;F_\mS(K_0,...,K_{m-1}):=
\bigcup_{j=1}^L\phi_j(K_0\times...\times K_{m-1}).
$$
By  an \emph{attractor} of a GIFS $\mS$ we mean a (necessarily unique) set $A_\mS\in\K(X)$ which satisfies
$$
A_\mS=F_\mS(A_\mS,...,A_\mS)=\bigcup_{j=1}^L\phi_j(A_\mS\times...\times A_\mS)
$$
and such that for every $K_0,...,K_m\in\K(X)$, the sequence $(K_k)$ defined by
$$
K_{k+m}:=F_\mS(K_k,...,K_{k+m-1}),\;\;k\geq 0,
$$
converges to $A_\mS$.
}\end{definition}
 \begin{theorem}
Each GIFS $\mS$ on a complete metric space consisting of generalized Banach contractions generates the (unique) attractor $A_\mS$.
\end{theorem}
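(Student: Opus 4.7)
The plan is to mimic the classical Hutchinson--Barnsley proof (Theorem \ref{H-B} together with Lemma \ref{lem3}), but to invoke the generalized Banach fixed point theorem for maps $X^m\to X$ stated just before Remark \ref{filip2} in place of the usual one. So the task reduces to establishing a generalized analogue of Lemma \ref{lem3}: if each $\phi_j$ is a generalized Banach contraction, then $F_\mS:\K(X)^m\to\K(X)$ is a generalized Banach contraction, with $\on{Lip}(F_\mS)\le \alpha_\mS:=\max_j \on{Lip}(\phi_j)<1$, where $\K(X)^m$ carries the max-metric $h_m$ built from the Hausdorff--Pompeiu metric $h$.

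First I would recall (or cite) that $(\K(X),h)$ is complete whenever $X$ is complete, so $(\K(X)^m,h_m)$ is complete as well; this is what lets us apply the generalized Banach fixed point theorem. Next I would verify two standard ingredients. (a) Union is nonexpansive: for any $A_1,\dots,A_L,B_1,\dots,B_L\in\K(X)$,
\[
h\Bigl(\bigcup_{j=1}^L A_j,\bigcup_{j=1}^L B_j\Bigr)\le \max_{j=1,\ldots,L} h(A_j,B_j).
\]
(b) For each $j$, the map $(K_0,\ldots,K_{m-1})\mapsto \phi_j(K_0\times\cdots\times K_{m-1})$ sends $\K(X)^m$ into $\K(X)$ (by continuity of $\phi_j$ and compactness of products) and is Lipschitz with constant $\on{Lip}(\phi_j)$.

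The main obstacle is (b), specifically the Lipschitz estimate
\[
h\bigl(\phi_j(K_0\times\cdots\times K_{m-1}),\phi_j(K_0'\times\cdots\times K_{m-1}')\bigr)\le \on{Lip}(\phi_j)\,h_m\bigl((K_0,\ldots,K_{m-1}),(K_0',\ldots,K_{m-1}')\bigr).
\]
For this I would unfold the definition of $h$ by bounding, for any $x=\phi_j(x_0,\ldots,x_{m-1})$ with $x_i\in K_i$, the distance to the other set: by compactness of each $K_i'$, pick $y_i\in K_i'$ realizing $d(x_i,y_i)\le h(K_i,K_i')$; then $d_m((x_0,\ldots,x_{m-1}),(y_0,\ldots,y_{m-1}))\le h_m(\cdots)$, and applying $\on{Lip}(\phi_j)$ gives the desired bound for $d(x,\phi_j(y_0,\ldots,y_{m-1}))$. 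The symmetric inequality is analogous. Combining (a) and (b) yields $\on{Lip}(F_\mS)\le\alpha_\mS<1$.

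Finally, applying the generalized Banach fixed point theorem to $F_\mS:\K(X)^m\to\K(X)$ produces a unique $A_\mS\in\K(X)$ with $F_\mS(A_\mS,\ldots,A_\mS)=A_\mS$, and for every $K_0,\ldots,K_{m-1}\in\K(X)$ the iterates $K_{k+m}:=F_\mS(K_k,\ldots,K_{k+m-1})$ converge to $A_\mS$ in $h$. This is exactly the definition of attractor, so the theorem follows.
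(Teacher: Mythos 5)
Your proposal is correct and follows exactly the route the paper indicates: establish that $F_\mS$ is a generalized Banach contraction with $\on{Lip}(F_\mS)\le\alpha_\mS$ (the content of Lemma~\ref{lem3g}, which the paper cites to \cite[Lemma 3.6]{MM} rather than reproving) and then apply the generalized Banach fixed point theorem on the complete space $(\K(X),h)$. The only difference is that you supply the details of the Lipschitz estimate and the nonexpansiveness of unions, which the paper leaves to the reference.
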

This result can be proved with a help of a certain generalization of the Banach fixed point theorem as it turns out that $F_\mS$ is a Banach contraction provided each $\phi_j$ is a Banach contraction (see, for example, \cite[Lemma 3.6]{MM}):

\begin{lemma}\label{lem3g}
Let $(X,d)$ be a metric space and $\mS=(X,(\phi_j)_{j=1}^{L})$ be a GIFS consisting of generalized Banach contractions. Then $F_\mS$ is a generalized Banach contraction with $\on{Lip}(F_\mS)\leq\alpha_\mS:=\max\{\on{Lip}(\phi_j):j=1,...,L\}$.
\end{lemma}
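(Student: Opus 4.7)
The plan is to reduce the estimate to two basic properties of the Hausdorff--Pompeiu metric $h$: (i) compatibility with finite unions, namely
$$h\!\left(\bigcup_{j=1}^L A_j,\bigcup_{j=1}^L B_j\right)\leq \max_{j=1,\ldots,L}h(A_j,B_j),$$
and (ii) compatibility of $h$ with images under a generalized Lipschitz map acting on a product of compacts. Once both are in place, applying (ii) to each $\phi_j$ and then (i) to the union that defines $F_\mS$ will immediately yield the stated constant $\alpha_\mS$.

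First I would fix tuples $(K_0,\ldots,K_{m-1})$ and $(K_0',\ldots,K_{m-1}')$ in $\K(X)^m$ and set $\delta:=\max_i h(K_i,K_i')$, which is exactly $d_m$ between the two tuples in the maximum metric on $\K(X)^m$. For a fixed $j$, I would prove the key inequality
$$h\bigl(\phi_j(K_0\times\cdots\times K_{m-1}),\,\phi_j(K_0'\times\cdots\times K_{m-1}')\bigr)\leq \on{Lip}(\phi_j)\cdot \delta.$$
To bound one of the two directed Hausdorff distances, pick any $(x_0,\ldots,x_{m-1})\in K_0\times\cdots\times K_{m-1}$. By compactness, for each $i$ the distance $d(x_i,K_i')$ is attained at some $y_i\in K_i'$, and $d(x_i,y_i)\leq h(K_i,K_i')\leq\delta$. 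Then using that $\on{Lip}(\phi_j)$ is measured with respect to the maximum metric on $X^m$,
$$d\bigl(\phi_j(x_0,\ldots,x_{m-1}),\phi_j(y_0,\ldots,y_{m-1})\bigr)\leq \on{Lip}(\phi_j)\max_i d(x_i,y_i)\leq \on{Lip}(\phi_j)\,\delta.$$
Taking the supremum over $(x_0,\ldots,x_{m-1})$ controls one directed distance; the symmetric argument controls the other, establishing the key inequality.

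Next, I would combine this over $j$ using property (i): since $F_\mS(K_0,\ldots,K_{m-1})$ and $F_\mS(K_0',\ldots,K_{m-1}')$ are unions over $j=1,\ldots,L$ of the sets $\phi_j(K_0\times\cdots\times K_{m-1})$ and $\phi_j(K_0'\times\cdots\times K_{m-1}')$ respectively, we obtain
$$h\bigl(F_\mS(K_0,\ldots,K_{m-1}),F_\mS(K_0',\ldots,K_{m-1}')\bigr)\leq \max_{j}\on{Lip}(\phi_j)\cdot\delta=\alpha_\mS\,d_m.$$
Property (i) itself follows by an elementary directed-distance computation and can be invoked as standard; property (ii) is the one real piece of content, and its proof uses the attainment of $d(x_i,K_i')$ on compacts together with the definition of $\on{Lip}$ with respect to the maximum metric $d_m$.

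I do not anticipate a true obstacle: the only subtle point is to line up the definition of the Lipschitz constant of $\phi_j$ (relative to $d_m$ on $X^m$) with the definition of $d_m$ on $\K(X)^m$, so that the ``coordinatewise nearest point'' argument delivers $\max_i d(x_i,y_i)$ rather than, say, a sum. Beyond this bookkeeping, the argument is a direct translation of the classical Hutchinson proof to the generalized setting.
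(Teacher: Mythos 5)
Your argument is correct and is essentially the standard proof that the paper delegates to the literature (it cites \cite[Lemma 3.6]{MM} rather than proving the lemma itself): bound each $h\bigl(\phi_j(K_0\times\cdots\times K_{m-1}),\phi_j(K_0'\times\cdots\times K_{m-1}')\bigr)$ by a coordinatewise nearest-point selection on compacts, then pass to the union via $h(\bigcup_j A_j,\bigcup_j B_j)\leq\max_j h(A_j,B_j)$. The one point you rightly flag --- that $\on{Lip}(\phi_j)$ is taken with respect to the maximum metric $d_m$ on $X^m$, so the estimate produces $\max_i d(x_i,y_i)$ and hence the constant $\alpha_\mS$ --- is handled correctly.
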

In \cite{M2}   and \cite{MM2}, Miculescu and Mihail studied the counterpart of the Hutchinson measure for GIFSs.
\begin{definition}
\emph{By a }GIFS with probabilities (GIFSp in short) \emph{we mean a triple $\mS=(X,(\phi_j)_{j=1}^L,(p_j)_{j=1}^L)$ so that $(X,(\phi_j)_{j=1}^L)$ is a GIFS and $p_1,...,p_L>0$ with $\sum_{j=1}^Lp_j=1$.\\
Each GIFSp generates the map $M_\mS:\mP(X)^m\to\mP(X)$, called the} generalized Markov operator \emph{which adjust to any $\mu_0,...,\mu_{m-1}\in\mP(X)$, the measure $M_\mS(\mu_0,...,\mu_{m-1})$ defined by
\begin{equation*}
M_{\mathcal{S}}(\mu_0,...,\mu_{m-1})(B):=\sum_{j=1}^{L} p_{j} (\mu\times...\times \mu_{m-1})(\phi_j^{-1}(B))=\sum_{j=1}^Lp_j\phi_j^\sharp(\mu_0\times...\times\mu_{m-1})(B), \quad
\end{equation*}
for any Borel set $B\subset X$, and where $\mu_0\times...\times\mu_{m-1}$ is the product measure.\\
By the } generalized Hutchinson measure \emph{of a GIFSp $\mS$ we mean the unique measure $\mu_\mS\in\mP(X)$ which satisfies
\begin{equation}
\mu_\mS=M_\mS(\mu_\mS,...,\mu_\mS)
\end{equation}
and such that for every $\mu_0,...,\mu_{m-1}\in\mP(X)$, the sequence $(\mu_k)$ defined by $\mu_{m+k}:=M_\mS(\mu_{k},...,\mu_{k+m-1})$, converges to $\mu_\mS$ with respect to $d_{MK}$.}
\end{definition}
Once again we observe that by (\ref{filip1}), for every IFSp $\mS$ and every continuous map $f:X\to\R$,
 \begin{equation}\label{filipaaa1}
\int_{X}f  \;dM_\mS(\mu_0,...,\mu_{m-1})=\sum_{j=1}^Lp_j\int_{X^m}f\circ \phi_j\;d(\mu_0\times...\times \mu_{m-1}).
\end{equation}
 For $m\in\N$ and $a_0,...,a_{m-1}\geq 0$, we say that $f:X^m\to X$ is a $(a_0,...,a_{m-1})$-contraction, if
\begin{equation}
\forall_{(x_0,...,x_{m-1}),(y_0,...,y_{m-1})\in X^m}\;d(f(x_0,...,x_{m-1}),f(y_0,...,y_{m-1}))\leq \sum_{j=0}^{m-1}a_jd(x_j,y_j)
\end{equation}
It is easy to see that the Lipschitz constant of $(a_0,...,a_{m-1})$-contraction  $f$ is bounded by $\on{Lip}(f)\leq\sum_{j=0}^{m-1}a_j$. In particular, if $\sum_{j=0}^{m-1}a_j<1$, then each $(a_0,...,a_{m-1})$-contraction is a generalized Banach contraction.\\
 Miculescu and Mihail in \cite{M2} and \cite{MM2} proved the following theorem (in fact,   they assumed $m=2$, but, as we will see, the proof works for general case):
\begin{theorem}\label{newfilipp1}
Assume that $\mS$ is a GIFSp on a complete metric space consisting of $(a_0,...,a_{m-1})$-contractions, where $\sum_{j=0}^{m-1}a_j<1$. Then $\mS$ admits the Hutchinson measure.
\end{theorem}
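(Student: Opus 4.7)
The plan is to mirror the proof of Lemma \ref{rnf} in the classical IFS case: show that the generalized Markov operator $M_\mS:\mP(X)^m\to\mP(X)$ is itself an $(a_0,\dots,a_{m-1})$-contraction with respect to the Monge--Kantorovich metric $d_{MK}$, and then invoke the generalized Banach fixed point theorem recalled just before Remark \ref{filip2}. Since completeness of $(X,d)$ transfers to $(\mP(X),d_{MK})$ and $\sum_{k=0}^{m-1}a_k<1$, this would immediately produce the unique generalized fixed point $\mu_\mS\in\mP(X)$ together with convergence of any sequence defined by $\mu_{k+m}:=M_\mS(\mu_k,\dots,\mu_{k+m-1})$.

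The substantive step is the inequality
\begin{equation*}
d_{MK}\!\bigl(M_\mS(\mu_0,\dots,\mu_{m-1}),\,M_\mS(\nu_0,\dots,\nu_{m-1})\bigr)\leq \sum_{k=0}^{m-1}a_k\,d_{MK}(\mu_k,\nu_k).
\end{equation*}
Fix $f\in\on{Lip}_1(X,\R)$. Using (\ref{filipaaa1}) and $\sum_j p_j=1$, the problem reduces to bounding, for each $j\in\{1,\dots,L\}$, the quantity
\begin{equation*}
\left|\int_{X^m}(f\circ\phi_j)\,d(\mu_0\times\dots\times\mu_{m-1})-\int_{X^m}(f\circ\phi_j)\,d(\nu_0\times\dots\times\nu_{m-1})\right|
\end{equation*}
by $\sum_k a_k\,d_{MK}(\mu_k,\nu_k)$. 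I would achieve this through a telescoping argument that interpolates between the two product measures by swapping $\mu_k$ for $\nu_k$ one coordinate at a time, producing $m$ increments indexed by $k=0,\dots,m-1$.

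For the $k$-th increment I would apply Fubini to integrate out the $m-1$ coordinates other than the $k$-th, obtaining a function $g_{j,k}:X\to\R$ of the remaining variable. Because $\phi_j$ is $a_k$-Lipschitz in its $k$-th argument uniformly in the others and $f$ is $1$-Lipschitz, pulling an absolute value inside the integral of the $\nu$'s and $\mu$'s shows $\on{Lip}(g_{j,k})\leq a_k$. If $a_k>0$, then $a_k^{-1}g_{j,k}\in\on{Lip}_1(X,\R)$, and the definition of $d_{MK}$ bounds the $k$-th increment by $a_k\,d_{MK}(\mu_k,\nu_k)$ (the case $a_k=0$ being trivial). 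Summing over $k$, taking the $p_j$-weighted average over $j$, and finally taking the supremum over $f\in\on{Lip}_1(X,\R)$ yields the contraction estimate.

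The only delicate point is the telescoping--Fubini bookkeeping, specifically the observation that integrating a function which is $a_k$-Lipschitz in one coordinate against a product of probability measures in the remaining coordinates preserves the $a_k$-Lipschitz constant. Once this contraction estimate is in hand, the rest is a direct application of the generalized Banach fixed point theorem, and the argument works verbatim for any $m\in\N$, subsuming the $m=2$ case originally treated by Miculescu and Mihail.
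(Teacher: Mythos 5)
Your proof is correct and follows essentially the same route as the paper: the paper likewise reduces the theorem to the lemma that $M_\mS$ is an $(a_0,\dots,a_{m-1})$-contraction for $d_{MK}$ and establishes the key inequality by the same coordinate-swapping telescoping between product measures, merely organized as an induction on $m$ (whose two-term inductive step, when unrolled, is exactly your $m$-step interpolation). The observation you isolate --- that integrating out coordinates against probability measures preserves the $a_k$-Lipschitz constant in the remaining variable, so the definition of $d_{MK}$ applies to $a_k^{-1}g_{j,k}$ --- is precisely what drives the paper's inductive step.
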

The proof bases on the following  lemma, which generalizes Lemma \ref{rnf}. As \cite[Theorem 3.4 ]{MM2} is stated for $m=2$, for completeness, we give here a short proof for the general case:
\begin{lemma}
If a GIFSp $\mS$ consists of $(a_0,...,a_{m-1})$-contractions, then $M_\mS$ is also $(a_0,...,a_{m-1})$-contraction.
\end{lemma}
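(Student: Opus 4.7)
The plan is to use the Monge--Kantorovich duality, that is, to bound
$\left|\int f\,dM_\mS(\mu_0,\dots,\mu_{m-1}) - \int f\,dM_\mS(\nu_0,\dots,\nu_{m-1})\right|$
by $\sum_{k=0}^{m-1}a_k d_{MK}(\mu_k,\nu_k)$ for an arbitrary $f\in\on{Lip}_1(X,\R)$, and then take the supremum over such $f$. By formula (\ref{filipaaa1}), this difference equals
$\sum_{j=1}^L p_j\bigl[\int f\circ\phi_j\,d(\mu_0\times\cdots\times\mu_{m-1}) - \int f\circ\phi_j\,d(\nu_0\times\cdots\times\nu_{m-1})\bigr]$,
so since $\sum p_j = 1$ it suffices to establish the desired bound for each fixed $j$ separately.

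Fix $j$, and telescope the two product measures by swapping marginals one coordinate at a time: the difference of integrals against $\mu_0\times\cdots\times\mu_{m-1}$ and $\nu_0\times\cdots\times\nu_{m-1}$ splits as a sum over $k=0,\dots,m-1$ of terms in which only the $k$-th marginal changes from $\mu_k$ to $\nu_k$ (with the preceding marginals being $\mu_\ell$'s and the following ones being $\nu_\ell$'s). For each such $k$, Fubini's theorem lets me integrate out the remaining $m-1$ coordinates first, producing a function
$$g_k(x_k) := \int f\bigl(\phi_j(x_0,\dots,x_{m-1})\bigr)\,d\mu_0(x_0)\cdots d\mu_{k-1}(x_{k-1})\,d\nu_{k+1}(x_{k+1})\cdots d\nu_{m-1}(x_{m-1}),$$
so that the $k$-th telescoped term equals $\int g_k\,d(\mu_k-\nu_k)$.

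The key estimate is that $\on{Lip}(g_k)\leq a_k$: indeed, from $\on{Lip}(f)\leq 1$ and the $(a_0,\dots,a_{m-1})$-contraction property of $\phi_j$, changing only $x_k$ to $x_k'$ inside the integrand changes the value by at most $a_k\, d(x_k,x_k')$, and this bound passes through the integral over the other coordinates. If $a_k>0$, then $g_k/a_k\in\on{Lip}_1(X,\R)$ and the duality definition of $d_{MK}$ yields $|\int g_k\,d(\mu_k-\nu_k)|\leq a_k\,d_{MK}(\mu_k,\nu_k)$; if $a_k=0$, then $g_k$ is constant and the integral vanishes, so the same bound holds trivially. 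Summing over $k$ and then over $j$ (with weights $p_j$ summing to $1$) gives the required inequality for each $f$, and taking the sup over $f\in\on{Lip}_1(X,\R)$ finishes the proof.

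The only step with any real content is the Lipschitz bound $\on{Lip}(g_k)\leq a_k$, which relies on the componentwise control provided by the $(a_0,\dots,a_{m-1})$-contraction hypothesis rather than merely the aggregate bound on $\on{Lip}(\phi_j)$; everything else is telescoping, Fubini, and Monge--Kantorovich duality. A minor bookkeeping point to check is that the partially integrated function $g_k$ is indeed Borel measurable, which is immediate by Fubini since $f\circ\phi_j$ is continuous and the marginals are Borel probability measures with compact support.
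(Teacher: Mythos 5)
Your proof is correct and follows essentially the same route as the paper's: both arguments telescope the product measures one marginal at a time and reduce each swapped coordinate to the one-dimensional Monge--Kantorovich estimate via the $a_k$-Lipschitz control that the $(a_0,\dots,a_{m-1})$-contraction hypothesis gives on each coordinate separately. The only difference is organizational --- you unroll the paper's induction on $m$ into a single $m$-term telescoping sum and prove the key one-coordinate bound $\on{Lip}(g_k)\leq a_k$ explicitly rather than quoting the $m=1$ case as known.
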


\begin{proof}
At first, by induction we show that for every $m\geq 1$, every $(a_0,...,a_{m-1})$-contraction $\phi:X^m\to X$, nonexpansive map $f:X\to\R$ and measures $\mu_0,...,\mu_{m-1},\nu_0,...,\nu_{m-1}\in\mP(X)$, we have
\begin{equation}\label{newfilip}
\left|\int_{X^m}f\circ \phi\;d(\mu_0\times...\times\mu_{m-1})-\int_{X^m}f\circ \phi\;d(\nu_0\times...\times\nu_{m-1})\right|\leq a_0d_{MK}(\mu_0,\nu_0)+...+a_{m-1}d_{MK}(\mu_{m-1},\nu_{m-1})
\end{equation}
The case $m=2$ is proven in the proof of \cite[Theorem 3.4]{MM2}, and the case $m=1$ is a folklore (the proof can be found in \cite{KTMV}; it also can be deduced from the case $m=1$). Assume that (\ref{newfilip}) holds for some $m$. We will prove it for $m+1$. For every $(a_0,...,a_{m})$-contraction $\phi:X^m\to X$, nonexpansive map $f:X\to\R$ and measures $\mu_,...,\mu_{m},\nu_0,...,\nu_{m}\in\mP(X)$, we have
\begin{equation*}
\left|\int_{X^{m+1}}f\circ \phi\;d(\mu_0\times...\times\mu_m)-\int_{X^{m+1}}f\circ \phi\;d(\nu_0\times...\times\nu_m)\right|\leq
\end{equation*}
\begin{equation*}
\left|\int_{X^{m+1}}f\circ \phi\;d(\mu_0\times...\times\mu_m)-\int_{X^{m+1}}f\circ \phi\;d(\mu_0\times...\times\mu_{m-1}\times\nu_m)\right|+
\end{equation*}
\begin{equation*}
+\left|\int_{X^{m+1}}f\circ \phi\;d(\mu_0\times...\times\mu_{m-1}\times\nu_m)-\int_{X^{m+1}}f\circ \phi\;d(\nu_0\times...\times\nu_m)\right|\leq
\end{equation*}
\begin{equation*}
\left|\int_{X^{m}}\left(\int_Xf\circ \phi\;d\mu_m\right)\;d(\mu_0\times...\times\mu_{m-1})-\int_{X^{m}}\left(\int_Xf\circ \phi\;d\nu_m\right)\;d(\mu_0\times...\times\mu_{m-1})\right|+
\end{equation*}
\begin{equation*}
\left|\int_{X}\left(\int_{X^m}f\circ \phi\;d(\mu_0\times...\times\mu_{m-1})\right)\;d\nu_m+\int_{X}\left(\int_{X^m}f\circ \phi\;d(\nu_0\times...\times\nu_{m-1})\right)\;d\nu_m\right|\leq
\end{equation*}
\begin{equation*}
\leq \int_{X^{m}}\left|\int_Xf\circ \phi\;d\mu_m - \int_Xf\circ \phi\;d\nu_m\right|\;d(\mu_0\times...\times\mu_{m-1})  +
\end{equation*}
\begin{equation*}+
\int_X\left|\int_{X^m}f\circ \phi\;d(\mu_0\times...\times\mu_{m-1})-\int_{X^m}f\circ \phi\;d(\nu_0\times...\times\nu_{m-1})\right|d\nu_m\leq\;\bigoplus
\end{equation*}
we use the case $m=1$ for maps of the form $y\to \phi(x_0,...,x_{m-1},y)$, and the inductive assumption for maps  of the form $(y_0,...,y_{m-1})\to \phi(y_0,...,y_{m-1},x_m)$
\begin{equation*}
\bigoplus\;\leq\int_{X^m}a_md_{MK}(\mu_m,\nu_m)\;d(\mu_0\times...\times\mu_{m-1})+\int_X(a_0d_{MK}(\mu_0,\nu_0)+...+a_{m-1}d_{MK}(\mu_{m-1},\nu_{m-1})\;d\nu_m=
\end{equation*}
\begin{equation*}
 =a_0d_{MK}(\mu_0,\nu_0)+...+a_md_{MK}(\mu_m,\nu_m).
\end{equation*}
Thus the inductive step is finished.\\
Finally, by (\ref{filipaaa1}), we have for every $m\in\N$, GIFS $\mS$, nonexpansive map $f:X\to \R$, and measures $\mu_0,...,\mu_{m-1},\nu_0,...,\nu_{m-1}\in\mP(X)$,
\begin{equation*}
\left|\int_Xf\;dM_{\mS}(\mu_0,...,\mu_{m-1})-\int_Xf\;dM_{\mS}(\nu_0,...,\nu_{m-1})\right|=
\end{equation*}
\begin{equation*}
=\left|\sum_{j=1}^Lp_j\int_{X^m}f\circ \phi_j\;d(\mu_0\times...\times\mu_{m-1})-\sum_{j=1}^Lp_j\int_{X^m}f\circ \phi_j\;d(\nu_0\times...\times\nu_{m-1})\right|\leq
\end{equation*}
\begin{equation*}
\leq\sum_{j=1}^Lp_j\left|\int_{X^m}f\circ \phi_j\;d(\mu_0\times...\times\mu_{m-1})-p_j\int_{X^m}f\circ \phi_j\;d(\nu_0\times...\times\nu_{m-1})\right|\leq
\end{equation*}
\begin{equation*}
\leq\sum_{j=1}^Lp_j(a_0d_{MK}(\mu_0,\nu_0)+...+a_{m-1}d_{MK}(\mu_{m-1},\nu_{m-1}))=a_0d_{MK}(\mu_0,\nu_0)+...+a_{m-1}d_{MK}(\mu_{m-1},\nu_{m-1})
\end{equation*}
Hence, as $f$ was arbitrary, we get
\begin{equation*}
d_{MK}(M_{\mS}(\mu_0,...,\mu_{m-1}),M_{\mS}(\nu_0,...,\nu_{m-1}))\leq a_0d_{MK}(\mu_0,\nu_0)+...+a_{m-1}d_{MK}(\mu_{m-1},\nu_{m-1})
\end{equation*}
and the result follows.
\end{proof}

From the perspective of our future results, it is worth to note the following:
\begin{corollary}
If a GIFSp $\mS$ consists of $(a_0,...,a_{m-1})$-contractions so that $\sum_{i=0}^{m-1}a_i<1$, then the map $\overline{M}_\mS:\mP(X)\to\mP(X)$ defined by
$$
\forall_{\mu\in\mP(X)}\;\overline{M}_\mS(\mu):=M_\mS(\mu,...,\mu)
$$
is a Banach contraction with the Lipschitz constant $\on{Lip}(\overline{M}_\mS)\leq \sum_{i=0}^{m-1}a_i$.
\end{corollary}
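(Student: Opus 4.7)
The plan is to derive this as an immediate consequence of the preceding lemma, in direct analogy with Remark \ref{filip2}: the lemma says that the full multi-argument operator $M_\mS$ is itself an $(a_0,\dots,a_{m-1})$-contraction on $\mP(X)^m$ (with respect to the Monge--Kantorovich metric on each factor), so restricting to the diagonal should produce a Banach contraction whose constant is the sum $\sum_{i=0}^{m-1}a_i$.

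Concretely, I would proceed in a single step. Given $\mu,\nu\in\mP(X)$, set $\mu_0=\cdots=\mu_{m-1}=\mu$ and $\nu_0=\cdots=\nu_{m-1}=\nu$. By the definition of $\overline{M}_\mS$ and by the preceding lemma applied to these constant $m$-tuples, one obtains
\begin{equation*}
d_{MK}\bigl(\overline{M}_\mS(\mu),\overline{M}_\mS(\nu)\bigr)
=d_{MK}\bigl(M_\mS(\mu,\dots,\mu),M_\mS(\nu,\dots,\nu)\bigr)
\leq \sum_{i=0}^{m-1}a_i\,d_{MK}(\mu,\nu)=\Bigl(\sum_{i=0}^{m-1}a_i\Bigr)d_{MK}(\mu,\nu).
\end{equation*}
Since by assumption $\sum_{i=0}^{m-1}a_i<1$, this estimate both establishes that $\overline{M}_\mS$ is a Banach contraction on the metric space $(\mP(X),d_{MK})$ and provides the asserted bound on its Lipschitz constant.

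There is no genuine obstacle here; the content of the corollary is entirely contained in the previous lemma, and the only thing to verify is the trivial arithmetic fact that an $(a_0,\dots,a_{m-1})$-contraction, when restricted to the diagonal, produces a map with Lipschitz constant at most $\sum_i a_i$. This parallels exactly the reduction in Remark \ref{filip2} from generalized Banach contractions on $X^m$ to Banach contractions on $X$.
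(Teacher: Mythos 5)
Your argument is correct and coincides with the paper's own proof: both apply the preceding lemma (that $M_\mS$ is an $(a_0,\dots,a_{m-1})$-contraction) to the constant tuples $(\mu,\dots,\mu)$ and $(\nu,\dots,\nu)$ and sum the coefficients. Nothing is missing.
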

\begin{proof}
  For every $\mu,\nu\in\mP(X)$, we have
\begin{equation*}
d_{MK}(\overline{M}_\mS(\mu),\overline{M}_\mS(\nu))=d_{MK}(M_\mS(\mu,...,\mu),M_{\mS}(\nu,...,\nu))\leq
\end{equation*}
\begin{equation*}\leq
a_0d_{MK}(\mu,\nu)+...+a_{m-1}d_{MK}(\mu,\nu)=(\sum_{i=0}^{m-1}a_i)d_{MK}(\mu,\nu)
\end{equation*}
\end{proof}

Now we give an extension of  Lemma \ref{filip5} for GIFSs:
\begin{lemma}\label{filip5a}
In the above frame, for every $\mu\in\mP(X)$, $\on{supp}(\overline{M}_\mS(\mu))=\overline{F}_\mS(\on{supp}(\mu))$ , where
$$
\forall_{K\in \K(X)}\;\overline{F}_\mS(K):=F_\mS(K,...,K).
$$
\end{lemma}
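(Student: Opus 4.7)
The plan is to mimic the proof of Lemma \ref{filip5} (the IFS case), reducing the GIFS statement to three standard facts about supports of Borel probability measures with compact support, applied in sequence to unwind the definition of $\overline{M}_\mS(\mu)$.

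First, I would record the three auxiliary facts. (i) If $\nu_1,\ldots,\nu_L\in\mP(X)$ and $p_1,\ldots,p_L>0$, then $\on{supp}(\sum_{j=1}^L p_j\nu_j)=\bigcup_{j=1}^L\on{supp}(\nu_j)$; this follows at once from the definition, because an open set $U$ has $(\sum p_j\nu_j)(U)=0$ iff $\nu_j(U)=0$ for every $j$. (ii) If $w:Y\to Z$ is continuous and $\nu\in\mP(Y)$, then $\on{supp}(w^\sharp\nu)=\overline{w(\on{supp}(\nu))}$; when $\on{supp}(\nu)$ is compact (which is our case, since we work in $\mP$), the image $w(\on{supp}(\nu))$ is already compact, hence closed, so the closure can be dropped. (iii) For $\mu\in\mP(X)$, the product measure satisfies $\on{supp}(\mu\times\cdots\times\mu)=\on{supp}(\mu)\times\cdots\times\on{supp}(\mu)=\on{supp}(\mu)^m$ in $X^m$ with the maximum metric; this is the standard fact that the support of a product of Borel probability measures is the product of supports, and in the $\mP$-setting all supports are compact so the product is compact as well.

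With these in hand the computation is a one-line composition. Starting from the definition
\begin{equation*}
\overline{M}_\mS(\mu)=M_\mS(\mu,\ldots,\mu)=\sum_{j=1}^L p_j\,\phi_j^\sharp(\mu\times\cdots\times\mu),
\end{equation*}
I apply (i) to reduce the support to a union, then (ii) (using continuity of each $\phi_j$ and compactness of the support of the product measure) to push the support inside $\phi_j$, and finally (iii) to identify the support of the product measure with $\on{supp}(\mu)^m$:
\begin{equation*}
\on{supp}(\overline{M}_\mS(\mu))=\bigcup_{j=1}^L\on{supp}\bigl(\phi_j^\sharp(\mu\times\cdots\times\mu)\bigr)=\bigcup_{j=1}^L\phi_j\bigl(\on{supp}(\mu)^m\bigr)=\overline{F}_\mS(\on{supp}(\mu)),
\end{equation*}
where the last equality is just the definition of $\overline{F}_\mS$.

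I do not expect a genuine obstacle here; the only point that requires care is (iii), since in full generality the support of a product measure need not equal the product of supports without some regularity hypothesis (e.g.\ $\sigma$-compactness of $X$), but in the present setting both factors have compact support, so $\on{supp}(\mu)^m$ is compact and the equality holds on general grounds. The rest is a routine bookkeeping exercise parallel to Lemma \ref{filip5}.
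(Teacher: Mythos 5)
Your proof is correct, but it is organized differently from the paper's. The paper argues the two inclusions directly: for $\supp(\overline{M}_\mS(\mu))\subseteq\overline{F}_\mS(\supp(\mu))$ it takes a point outside the right-hand side, separates it by an open set $U$, and shows $(\mu\times\cdots\times\mu)(\phi_j^{-1}(U))=0$ using $\supp(\mu\times\cdots\times\mu)=\supp(\mu)^m$; for the reverse inclusion it assumes $\overline{M}_\mS(\mu)(U)=0$ and derives a contradiction by fitting an open box $W_0\times\cdots\times W_{m-1}\subseteq\phi_{j_0}^{-1}(U)$ around a preimage point and using $\mu(W_i)>0$. You instead factor the identity through three general support lemmas --- support of a positive combination, support of a pushforward under a continuous map, and support of a product measure --- and compose them. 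The content is the same (your fact (iii) is exactly what the paper invokes, and your fact (ii) is what the paper's open-box manipulation proves in the special case at hand), but your version is modular and reusable, at the cost of having to justify (ii), whose inclusion $\supp(w^\sharp\nu)\subseteq\overline{w(\supp(\nu))}$ silently uses that a compactly supported Borel measure vanishes off its support (a Lindel\"of argument on the compact carrier); the paper's hands-on argument sidesteps stating that as a separate lemma by working directly with the box neighborhoods. Both proofs are sound; yours is shorter and cleaner if the three auxiliary facts are taken as known, while the paper's is self-contained.
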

\begin{proof}
Assume first that $x\notin \on{supp}(\overline{M}_\mS(\mu))$. Then there exists an open set $U\ni x$ with $\overline{M}_\mS(\mu)(U)=0$, which means that for every $j=1,...,L$, $(\mu\times...\times \mu)(\phi_j^{-1}(U))=0$. Suppose on the contrary that $x\in \overline{F}_\mS(\on{supp}(\mu))$. Then there is $(x_0,...,x_{m-1})\in\on{supp}(\mu)\times...\times\on{supp}(\mu)$ and $j_0=1,...,L$ such that $\phi_{j_0}(x_0,...,x_{m-1})=x$. In particular, $(x_0,...,x_{m-1})\in\phi_{j_0}^{-1}(U)$, so there are open sets $W_0,...,W_{m-1}$ so that $(x_0,...,x_{m-1})\in W_0\times...\times W_{m-1}\subset \phi_{j_0}^{-1}(U)$. Hence
$$
0=(\mu\times...\times \mu)(\phi_{j_0}^{-1}(U))\geq (\mu\times...\times \mu)(W_0\times...\times W_{m-1})=\mu(W_0)\cdot...\cdot \mu(W_{m-1})>0,
$$
where the last inequality follows from the fact that $x_i\in\on{supp}(\mu)$ for every $i=0,...,m-1$. All in all, $x\notin \overline{F}_\mS(\on{supp}(\mu))$.\\
Now assume that $x\notin\overline{F}_\mS(\on{supp}(\mu))$. Since the latter set is closed, we can find an open set $U\ni x$ disjoint with $\overline{F}_\mS(\on{supp}(\mu))$. Then for any $j=1,...,L$, we have that $\phi_j^{-1}(U)\cap(\on{supp}(\mu)\times...\times\on{supp}(\mu))=\emptyset$. Hence, as $\on{supp}(\mu\times...\times\mu)=\on{supp}(\mu)\times...\times\on{supp}(\mu)$, we have that
$$
\overline{M}_\mS(\mu)(U)=\sum_{j=1}^Lp_j(\mu\times...\times\mu)(\phi^{-1}(U))=0.
$$
\end{proof}

Finally, let us remark that if $m=1$, then presented theory reduces to recalled earlier classical IFS theory. However, as was proved in \cite{MS} and \cite{S}, there are sets (even subsets of the real line) which are GIFSs attractors but which are not IFS attractors. Hence the theory of GIFSs is essentially wider than the IFSs' one.

\section{Discretization of the Banach fixed point theorem and its application to IFS theory} \label{sec:discret fix point thm}
\subsection{Discretization of the Banach fixed point theorem}
In this section we recall an important result from our earlier paper \cite{DOS}.
\begin{definition}\emph{
A subset ${\hat{X}}$ of a metric space $(X,d)$ is called an }$\ve$-net\emph{ of $X$, if for every $x\in X$, there is $y\in {\hat{X}}$ such that $d(x,y)\leq \ve$.\\
A map $r:X\to {\hat{X}}$ such that $r(x)=x$ for $x\in {\hat{X}}$ and $d(x,r(x))\leq\ve$ for all $x\in X$ will be called an }$\ve$-projection\emph{ of $X$ to $\hat{X}$.\\
For $f:X\to X$, by its} $r$-discretization\emph{ we will call the map $\hat{f}:=(r\circ f)_{\vert \hat{X}}$.\\
Similarly, if $f:X^m\to X$, by its} $r$-discretization\emph{ we will call the map $\hat{f}:=(r\circ f)_{\vert \hat{X}^m}$.\\
}
\end{definition}
The following result can be considered as a discrete version of the Banach fixed point theorem.

\begin{theorem}\label{dfp} (\cite[Theorem 4.2]{DOS})
Assume that $(X,d)$ is a complete metric space and $f:X\to X$ is a Banach contraction with the unique fixed point $x_*$ and the Lipschitz constant $\alpha$. Let $\ve>0$, ${\hat{X}}$ be an $\ve$-net, $r:X\to{\hat{X}}$ be an $\ve$-projection and $\hat{f}$ be an $r$-discretization of $f$.\\
For every $x\in {\hat{X}}$ and $n\in\N$,
\begin{equation}\label{approx}
d(\hat{f}^n(x),x_*)\leq\frac{\ve}{1-\alpha}+\alpha^nd(x,x_*).
\end{equation}
  In particular, there exists a point $y\in X$ so that $d(x_*,y)\leq \frac{2\ve}{1-\on{Lip}(f)}$ and which can be reached as an appropriate iteration of $\hat{f}$ of an arbitrary point of $\hat{X}$.
\end{theorem}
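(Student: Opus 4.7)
The plan is to combine the contraction inequality for $f$ with the defining bound $d(r(y),y)\le \varepsilon$ for the projection, using the triangle inequality to get a perturbed recursive estimate, and then iterate.

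\textbf{Step 1 (one-step estimate).} First I would fix any $x\in\hat X$ and control $d(\hat f(x),x_*)$. Since $\hat f(x)=r(f(x))$, the defining property of the $\varepsilon$-projection gives $d(\hat f(x),f(x))\le \varepsilon$. The contraction of $f$ together with $f(x_*)=x_*$ gives $d(f(x),x_*)\le\alpha\, d(x,x_*)$. Triangle inequality then yields
\begin{equation*}
d(\hat f(x),x_*)\le d(\hat f(x),f(x))+d(f(x),x_*)\le \varepsilon+\alpha\, d(x,x_*).
\end{equation*}

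\textbf{Step 2 (induction).} Next I would iterate Step 1. Note that $\hat f$ maps $\hat X$ into itself (because $r$ takes values in $\hat X$), so the one-step bound can be applied to $\hat f^{n-1}(x)\in\hat X$. Induction on $n$ gives
\begin{equation*}
d(\hat f^n(x),x_*)\le \varepsilon(1+\alpha+\alpha^2+\dots+\alpha^{n-1})+\alpha^n d(x,x_*)\le \frac{\varepsilon}{1-\alpha}+\alpha^n d(x,x_*),
\end{equation*}
which is precisely (\ref{approx}).

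\textbf{Step 3 (existence of the approximating point).} For the ``in particular'' part I would simply choose $n$ large enough that $\alpha^n d(x,x_*)\le \frac{\varepsilon}{1-\alpha}$; this is possible because $\alpha<1$. Setting $y:=\hat f^n(x)$ then yields $d(x_*,y)\le \frac{2\varepsilon}{1-\alpha}=\frac{2\varepsilon}{1-\operatorname{Lip}(f)}$, and $y$ is obtained by iterating $\hat f$ starting from $x\in\hat X$ as required.

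Honestly, I do not see a serious obstacle here: the whole argument is a perturbation of the standard Banach fixed point estimate, with the perturbation size controlled uniformly by $\varepsilon$. The only small thing to be careful about is that $x_*$ need not lie in $\hat X$, but the estimates only use $d(\cdot,x_*)$ as a scalar quantity, so this causes no trouble. The geometric-series collapse of the additive error into $\varepsilon/(1-\alpha)$ is the one piece of ``real'' content, and it is forced by the one-step bound obtained in Step 1.
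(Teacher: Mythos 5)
Your argument is correct: the one-step bound $d(\hat f(x),x_*)\le \varepsilon+\alpha\,d(x,x_*)$, the geometric-series iteration, and the choice of $n$ with $\alpha^n d(x,x_*)\le \frac{\varepsilon}{1-\alpha}$ together give exactly the stated estimates. The present paper does not reprove this theorem (it is quoted from \cite[Theorem 4.2]{DOS}), and your perturbed Banach-iteration argument is precisely the standard proof of that result, so there is nothing to add.
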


\subsection{Discretization of hyperspace $\K(X)$}

\begin{definition}\emph{
We say that an $\ve$-net $\hat{X}$ of a metric space $X$ is }proper\emph{, if for every bounded $D\subset X$, the set $D\cap\hat{X}$ is finite.}
\end{definition}
Note that proper $\ve$-nets are discrete (as topological subspaces), but the converse need not be true.
The existence of proper $\ve$-nets for every $\ve>0$ is guaranteed by the assumption that $X$ has so-called \emph{Heine--Borel property}, that is, the assumption that each closed and bounded set is compact. In particular, Euclidean spaces and compact spaces admit such nets.

\begin{lemma}\label{filip4}  (\cite[Lemma 5.2]{DOS})
Assume that $(X,d)$ is a metric space, ${\hat{X}}$ is a proper $\ve$-net and $r:X\to\hat{X}$ is an $\ve$-projection . Then:
\begin{itemize}
\item[(i)] $\K(\hat{X})$ consists of all finite subsets of ${\hat{X}}$;
\item[(ii)] $\K(\hat{X})$ is an $\ve$-net of $\K(X)$;
\item[(ii)] the map $r:\K(X)\to \K(\hat{X})$ defined by $r(K):=\{r(x):x\in K\}$, is an $\ve$-projection of $\K(X)\to\K(\hat{X})$ (using the same letter $r$ will not lead to any confusion).
\end{itemize}
\end{lemma}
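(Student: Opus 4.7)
The plan is to prove the three assertions in order, using the properness of $\hat{X}$ as the key ingredient. I first note that the properness assumption means that bounded subsets of $\hat{X}$ are finite, so compactness in $\hat{X}$ collapses to finiteness; this is exactly what lets us bypass the fact that the $\varepsilon$-projection $r:X\to\hat{X}$ need not be continuous.

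For (i), I would observe that every finite subset of $\hat{X}$ is trivially in $\K(\hat{X})$. Conversely, if $K\in\K(\hat{X})$, then $K$ is compact, hence bounded in $X$, hence a bounded subset of $\hat{X}$; by properness, $K$ is finite. For (ii) and (iii) simultaneously, I would define, for $K\in\K(X)$, the candidate $r(K):=\{r(x):x\in K\}$ and first check that $r(K)\in\K(\hat{X})$. Since $K$ is bounded and $d(x,r(x))\leq\varepsilon$ for every $x\in X$, the set $r(K)$ is a bounded subset of $\hat{X}$, hence finite by properness, hence compact and in $\K(\hat{X})$ by (i). This shows the target set of $r:\K(X)\to\K(\hat{X})$ is well defined.

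Next I would estimate the Hausdorff--Pompeiu distance $h(K,r(K))$. For any $x\in K$ we have $r(x)\in r(K)$ with $d(x,r(x))\leq\varepsilon$, so $\sup_{x\in K}d(x,r(K))\leq\varepsilon$. Conversely, for any $y\in r(K)$, say $y=r(x)$ for some $x\in K$, we have $d(y,K)\leq d(r(x),x)\leq\varepsilon$. Both bounds together give $h(K,r(K))\leq\varepsilon$, which proves (ii). Finally, for (iii), I would note that if $K\in\K(\hat{X})$ then each point of $K$ lies in $\hat{X}$ and is fixed by $r$, hence $r(K)=K$; combined with the Hausdorff bound just established, this is exactly the definition of an $\varepsilon$-projection of $\K(X)$ onto $\K(\hat{X})$.

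The only mild subtlety is ensuring that $r(K)$ lies in $\K(\hat{X})$ despite $r$ being possibly discontinuous; this is handled by the boundedness-to-finiteness shortcut supplied by properness. Everything else reduces to two short estimates with the triangle inequality for $d$, so no step requires a genuinely new idea beyond properly invoking the definitions.
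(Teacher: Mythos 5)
Your argument is correct and complete: properness turns boundedness into finiteness, which gives (i) and guarantees $r(K)\in\K(\hat{X})$, and the two one-line Hausdorff estimates together with $r(x)=x$ on $\hat{X}$ yield (ii) and (iii). The paper itself states this lemma without proof, citing \cite[Lemma 5.2]{DOS}, and your proof is exactly the standard argument one would expect there; the only cosmetic point is that in (i) the finite subsets should be taken nonempty, since $\K(\hat{X})$ consists of nonempty compact sets.
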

\subsection{Discretization of the Hutchinson-Barnsley theorem and its version for GIFSs}
Assume that $\mS=(X,(\phi_j)_{j=1}^L)$ is a GIFS.  Recall that $\overline{F}_\mS:\K(X)\to\K(X)$  is given by
\begin{equation}
\forall_{K\in\K(X)}\;\overline{F}_\mS(K):=F_\mS(K,...,K)=\bigcup_{j=1}^L\phi_j(K\times...\times K).
\end{equation}
\begin{remark}\label{filip3}\emph{\\
(1) If $m=1$, then $\mS$ is an IFS and $\overline{F}_\mS=F_\mS$, the Hutchinson operator.\\
(2) If $\mS$ consists of generalized Banach contractions and $(X,d)$ is complete, then
by Remark \ref{filip2} and Lemma \ref{lem3g}, we see $\overline{F}_\mS$ is a Banach contraction, $\on{Lip}(\overline{F}_\mS)\leq\alpha_\mS$ and the unique fixed point of $\overline{F}_\mS$ equals the generalized attractor $A_\mS$.}
\end{remark}
For $\delta>0$, a set $A_\delta\in\K(X)$ will be called \emph{an attractor of $\mS$ with resolution $\delta$}, if $h(A_\delta,A_\mS)\leq\delta$.
\begin{theorem}\label{tt2}  (\cite[Theorem 6.2]{DOS})
Let $(X,d)$ be a complete metric space and $\mS$ be a GIFS on $X$ consisting of generalized Banach contractions. Let $\ve>0$, ${\hat{X}}$ be a proper $\ve$-net, $r:X\to {\hat{X}}$ be an $\ve$-projection on ${\hat{X}}$ and $\hat{\mS}:=(\hat{X},(\hat{\phi}_j)_{j=1}^L)$, where $\hat{\phi}_j=(r\circ \phi_j)_{\vert{\hat{X}}}$ is the discretization of $\phi_j$.\\
For any $K\in\K({\hat{X}})$ and $n\in\N$,
\begin{equation}\label{approx2}
h(\overline{F}_{\hat{\mS}}^n(K),A_\mS)\leq\frac{\ve}{1-\alpha_\mS}+\alpha_\mS^nh(K,A_\mS),
\end{equation}
where $A_\mS$ is the attractor of $\mS$.\\
In particular, there is $n_0\in\N$ such that for every $n\geq n_0$, $\overline{F}_{\hat{\mS}}^n(K)$ is an attractor of $\mS$ with resolution $\frac{2\ve}{1-\alpha_\mS}$.
\end{theorem}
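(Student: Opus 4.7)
The plan is to reduce Theorem \ref{tt2} to a direct application of the discrete Banach fixed point theorem (Theorem \ref{dfp}), but with the base space taken to be the hyperspace $\K(X)$ rather than $X$ itself. The whole argument is then a matter of correctly identifying the ingredients.

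First I would invoke Remark \ref{filip3}(2): since $\mS$ consists of generalized Banach contractions and $X$ is complete, the operator $\overline{F}_\mS:\K(X)\to\K(X)$ is a Banach contraction with $\on{Lip}(\overline{F}_\mS)\leq\alpha_\mS$ and its unique fixed point is the attractor $A_\mS$. Note that $(\K(X),h)$ is complete because $(X,d)$ is, so Theorem \ref{dfp} is applicable on this hyperspace. Next, by Lemma \ref{filip4}, the set $\K(\hat{X})$ is an $\ve$-net of $\K(X)$ and the induced map $r:\K(X)\to\K(\hat{X})$, $r(K):=\{r(x):x\in K\}$, is an $\ve$-projection.

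The key bookkeeping step is to check that $\overline{F}_{\hat{\mS}}$ is precisely the $r$-discretization of $\overline{F}_\mS$ in the sense of the hyperspace. For $K\in\K(\hat{X})$, a direct computation gives
\begin{equation*}
\overline{F}_{\hat{\mS}}(K)=\bigcup_{j=1}^L\hat{\phi}_j(K\times\cdots\times K)=\bigcup_{j=1}^L r(\phi_j(K\times\cdots\times K))=r\Bigl(\bigcup_{j=1}^L\phi_j(K\times\cdots\times K)\Bigr)=r(\overline{F}_\mS(K)),
\end{equation*}
so indeed $\overline{F}_{\hat{\mS}}=(r\circ \overline{F}_\mS)_{|\K(\hat{X})}$. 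With this identification, Theorem \ref{dfp} applied to $\overline{F}_\mS$ on the complete metric space $(\K(X),h)$, with the $\ve$-net $\K(\hat{X})$ and $\ve$-projection $r$, yields exactly the inequality (\ref{approx2}).

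The ``in particular'' clause is then immediate: since $\alpha_\mS<1$, we can pick $n_0$ large enough so that $\alpha_\mS^{n_0}h(K,A_\mS)\leq\frac{\ve}{1-\alpha_\mS}$, and then for every $n\geq n_0$,
\begin{equation*}
h(\overline{F}_{\hat{\mS}}^n(K),A_\mS)\leq \frac{2\ve}{1-\alpha_\mS},
\end{equation*}
which is the required resolution. There is no real obstacle here beyond verifying the commutation of $r$ with the union defining $\overline{F}_\mS$; this works simply because $r$ is defined pointwise and hence distributes over finite unions. The main conceptual point is that the hyperspace framework of Lemma \ref{filip4} turns the GIFS problem into a single-map fixed-point problem to which Theorem \ref{dfp} directly applies, and no separate inductive argument on $m$ is needed thanks to Remark \ref{filip3}(2).
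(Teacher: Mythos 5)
Your proof is correct and follows exactly the route the paper intends: the paper states Theorem \ref{tt2} without proof (recalling it from \cite{DOS}), but the surrounding machinery --- Theorem \ref{dfp}, Lemma \ref{filip4}, and Remark \ref{filip3}(2) --- is set up precisely so that the result follows by applying the discrete fixed point theorem to $\overline{F}_\mS$ on the complete hyperspace $(\K(X),h)$ with the induced net $\K(\hat{X})$ and projection $r$. Your verification that $\overline{F}_{\hat{\mS}}=(r\circ\overline{F}_\mS)_{|\K(\hat{X})}$ is the one nontrivial bookkeeping step, and it is done correctly.
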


\section{Discretization of the Markov Operator for IFSs and GIFSs}\label{sec:discret mark operator IFS}
Throughout this section we assume that $\mS=(X,(\phi_j)_{j=1}^L,(p_j)_{j=1}^L)$ is a GIFSp on a complete metric space $(X,d)$ consisting of $(a_0,...,a_{m-1})$-contractions, for some $a_0,...,a_{m-1}$ which satisfy $\sum_{j=1}^La_j<1$. Let us remark that if $m=1$, i.e., when $\mS$ is an IFSp, then $\mS$ consists of Banach contractions.\\
Consider $\ve>0$, ${\hat{X}}$ a proper $\ve$-net, $r:X\to {\hat{X}}$ a Borel measurable $\ve$-projection on ${\hat{X}}$ and $\hat{\mS}:=(\hat{X},(\hat{\phi}_j)_{j=1}^L, (p_j)_{j=1}^L)$, where $\hat{\phi}_j=(r\circ \phi_j)_{\vert{\hat{X}^m}}$ is the discretization of $\phi_j$.\\
\begin{lemma}
For any bounded set $D\subset X$, the set
  $\{y \in \hat{X} \; | \; r^{-1}(y)\cap D\neq \varnothing\}$
  is finite.
\end{lemma}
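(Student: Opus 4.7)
The plan is to show that every $y$ in the set under consideration must lie within distance $\varepsilon$ of $D$, so that the entire set is contained in a bounded region, and then invoke the assumption that $\hat X$ is a \emph{proper} $\varepsilon$-net.

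More precisely, let $y\in\hat X$ be such that $r^{-1}(y)\cap D\neq\varnothing$, and pick any $x\in r^{-1}(y)\cap D$. Since $r$ is an $\varepsilon$-projection, $d(x,y)=d(x,r(x))\leq\varepsilon$. Fix any $x_0\in D$; since $D$ is bounded, $M:=\sup_{z\in D}d(z,x_0)<\infty$, and then
$$
d(y,x_0)\leq d(y,x)+d(x,x_0)\leq \varepsilon + M.
$$
Thus the set $\{y\in\hat X \mid r^{-1}(y)\cap D\neq\varnothing\}$ is contained in the bounded set $B:=\{z\in X: d(z,x_0)\leq \varepsilon+M\}$.

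Finally, since $\hat X$ is a proper $\varepsilon$-net, $\hat X\cap B$ is finite by definition, and the desired set, being a subset of $\hat X\cap B$, is therefore finite as well.

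The argument is entirely routine once one realizes that $r^{-1}(y)\cap D\neq\varnothing$ forces $y$ to lie in the $\varepsilon$-enlargement of $D$; there is no real obstacle, as the essential content of the lemma is exactly the definition of ``proper $\varepsilon$-net'' combined with the defining bound of an $\varepsilon$-projection.
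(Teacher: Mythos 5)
Your proof is correct and follows essentially the same route as the paper's: show that the set of relevant $y$'s is bounded (you bound $d(y,x_0)$ by $\varepsilon+M$ via the $\varepsilon$-projection property, while the paper bounds pairwise distances by $2\varepsilon+\operatorname{diam}(D)$) and then invoke properness of $\hat X$. No issues.
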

\begin{proof}
Choose $y,y'\in \hat{X}$ so that for some $x,x'\in D$, $r(x)=y,\;r(x')=y'$. Then
$$d(y,y')=d(r(x),r(x'))\leq d(r(x),x)+d(x,x')+d(x',r(x'))\leq 2\ve+\on{diam}(D).
$$
Hence the considered set is bounded. Since $\hat{X}$ is proper, we arrive to thesis.
\end{proof}

Consider the family $\Omega \subseteq 2^X$ defined by
  \begin{equation}\label{partition}
    \Omega:=\{ r^{-1}(y) \; | \; y \in \hat{X}\}.
  \end{equation}
This family is obviously a measurable partition of $X$. Moreover, for any $y \in \hat{X}$ we have $r^{-1}(y) \subseteq \overline{B}_{\ve}(y)$ where $\overline{B}_{\ve}(y):=\{ z \in X \; | \;  d(z,y)\leq \ve\}$. Indeed, $z \in r^{-1}(y)$ implies that $r(z)=y$ and by definition  $d(z,y) \leq  \ve$.\\
Now let $e:\hat{X}\to X$ be identity map. Then the $e^\sharp:\mP(\hat{X}) \to \tilde{\mP}(\hat{X})$  is the operator of natural extension of measures from $\mP(\hat{X})$. Indeed, for any $\mu\in\mP(\hat{X})$ and a Borel set $B\subset X$, we have
\begin{equation}
e^\sharp\mu(B)=\mu(e^{-1}(B))=\mu(B\cap \hat{X}).
\end{equation}
Finally, set $\tilde{\mP}(\hat{X}):=e^\sharp(\mP(\hat{X}))$.
The next lemma can be considered as a counterpart of Lemma \ref{filip4} for the setting of measures:
\begin{lemma}\label{net prob}
In the above frame:
\begin{itemize}
  \item[a)] $\tilde{\mathcal{P}}(\hat{X})=\left\{ \sum_{i=1}^{n}   a_i \delta_{y_{i}} \; | \; n\in\N,\;y_{i} \in \hat{X},\; \sum_{i=1}^{m} a_i =1, \; m < \infty\right\}$, where $\delta_x$ is the Dirac measure on $X$ supported on $x$;
  \item[b)] $\tilde{\mathcal{P}}(\hat{X})$ is a proper $\ve$-net of $\mathcal{P}(X)$;
  \item[c)] $r^{\sharp}$ is an $\ve$-projection of $\mP(X)$ to $\tilde{\mP}(\hat{X})$, when considering $r$ as a map $r:X\to X$;
  \item[d)] $\hat{\overline{M}_\mS}=e^\sharp\circ \overline{M}_{\hat{\mS}}$, where $\hat{\overline{M}_\mS}=(r^\sharp\circ \overline{M}_\mS)_{\vert \tilde{\mP}(\hat{X})}$ is the $r^\sharp$-discretization of $M_\mS$.
\end{itemize}
\end{lemma}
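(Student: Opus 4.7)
The plan is to handle parts (a)--(c) by directly unpacking the definitions, and to deduce (d) via the integral characterization (\ref{filipaaa1}) of the generalized Markov operator, which bypasses any set-theoretic manipulation of preimages.

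For (a), I first observe that $\hat{X}$ is discrete as a topological subspace, because the proper $\ve$-net property forces every bounded (hence compact) subset of $\hat{X}$ to be finite. Thus every $\mu\in\mP(\hat{X})$ is finitely supported and has the form $\mu=\sum_{i=1}^n a_i\delta_{y_i}$ with $y_i\in\hat{X}$ and $\sum_{i=1}^n a_i=1$; applying the inclusion $e:\hat{X}\to X$ gives $e^\sharp\mu=\sum_{i=1}^n a_i\delta_{y_i}\in\mP(X)$, which is the family described in (a).

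For the $\ve$-net clause of (b) and for (c), the key is the one-line estimate
\begin{equation*}
\left|\int_X f\,d\mu-\int_X f\,dr^\sharp\mu\right|=\left|\int_X (f-f\circ r)\,d\mu\right|\leq\int_X d(x,r(x))\,d\mu(x)\leq\ve,
\end{equation*}
valid for every $\mu\in\mP(X)$ and every $f\in\on{Lip}_1(X,\R)$, which yields $d_{MK}(\mu,r^\sharp\mu)\leq\ve$. Together with the observation that $r^\sharp\mu$ is supported on the finite set $r(\on{supp}\mu)\subset\hat{X}$ (bounded, hence finite, by properness), this shows that $r^\sharp$ sends $\mP(X)$ into $\tilde{\mP}(\hat{X})$ and that $\tilde{\mP}(\hat{X})$ is $\ve$-dense in $\mP(X)$. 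Since $r$ fixes every point of $\hat{X}$, for $\nu=\sum a_i\delta_{y_i}\in\tilde{\mP}(\hat{X})$ we get $r^\sharp\nu=\sum a_i\delta_{r(y_i)}=\nu$, giving the projection property in (c). For the properness clause of (b), I plan to invoke the lemma immediately preceding the statement --- which asserts that $\{y\in\hat{X}:r^{-1}(y)\cap D\neq\varnothing\}$ is finite for bounded $D\subset X$ --- to bound the atoms of the measures in a $d_{MK}$-bounded family, once their supports are confined to a common bounded region of $X$.

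For (d), I test against continuous $f:X\to\R$ using (\ref{filipaaa1}). For $\mu\in\mP(\hat{X})$,
\begin{equation*}
\int_X f\,d(r^\sharp\circ\overline{M}_\mS)(e^\sharp\mu)=\int_X (f\circ r)\,d\overline{M}_\mS(e^\sharp\mu)=\sum_{j=1}^L p_j\int_{X^m}(f\circ r)\circ\phi_j\,d(e^\sharp\mu)^m.
\end{equation*}
Because $(e^\sharp\mu)^m$ is concentrated on $\hat{X}^m$ and $\hat{\phi}_j=(r\circ\phi_j)|_{\hat{X}^m}$, this collapses to $\sum_j p_j\int_{\hat{X}^m}f\circ\hat{\phi}_j\,d\mu^m$. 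On the other hand, since $\hat{\phi}_j$ lands in $\hat{X}$, the definitions of $e^\sharp$ and $\overline{M}_{\hat{\mS}}$ combined with (\ref{filipaaa1}) yield $\int_X f\,d(e^\sharp\circ\overline{M}_{\hat{\mS}})(\mu)=\sum_j p_j\int_{\hat{X}^m}f\circ\hat{\phi}_j\,d\mu^m$ as well. Equality for every continuous $f$ forces the two measures to coincide.

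The main obstacle I expect is the properness clause of (b). The metric $d_{MK}$ only controls an averaged distance of the support to a reference point, so a $d_{MK}$-bounded family can a priori harbour measures whose atoms drift arbitrarily far into $\hat{X}$ by placing small masses at remote points. The natural reading is to restrict attention to families whose supports sit in a common bounded region of $X$ --- which is automatic in the envisioned application, since iterates of $\overline{M}_{\hat{\mS}}$ remain close to the attractor --- and then the preceding lemma leaves only finitely many admissible atoms, after which the remaining argument is routine. The other three items are essentially bookkeeping: (a) reduces to discreteness of $\hat{X}$, the $\ve$-net and projection assertions reduce to the single Lipschitz estimate above, and (d) falls out cleanly from the characterization (\ref{filipaaa1}).
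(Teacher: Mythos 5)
Your argument is correct in substance and reaches the same conclusions, but it departs from the paper's proof in two places. For the net/projection claims (b)--(c), the paper first partitions the compact support $K$ of $\mu$ into the sets $r^{-1}(y_i)\cap K$, builds the discrete measure $\nu=\sum_i\mu(r^{-1}(y_i)\cap K)\,\delta_{y_i}$ by hand, estimates $d_{MK}(\mu,\nu)\leq\ve$ termwise over the partition, and only afterwards (in part (c)) verifies that this $\nu$ coincides with $r^\sharp\mu$; your single change-of-variables inequality $\left|\int f\,d\mu-\int f\,dr^\sharp\mu\right|=\left|\int(f-f\circ r)\,d\mu\right|\leq\ve$ collapses those two steps into one and is arguably cleaner. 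For (d) the paper works purely set-theoretically, chasing $\phi_j^{-1}(r^{-1}(B))$ through the definition of the Markov operator for an arbitrary Borel set $B$, whereas you test both sides against continuous $f$ via (\ref{filipaaa1}). Both routes are valid; yours needs the small additional remark that the identity (\ref{filip1}) extends from continuous to bounded Borel integrands, since $f\circ r$ is only Borel measurable, while the paper's computation avoids integration altogether.

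The one point you flag but do not close --- properness in item (b) --- is also not closed by the paper, and in fact cannot be: with the paper's definition (every $d_{MK}$-bounded subset meets the net in a finite set), $\tilde{\mP}(\hat{X})$ is \emph{not} proper, because even two atoms $y_1,y_2\in\hat{X}$ already carry a continuum of convex combinations $(1-t)\delta_{y_1}+t\delta_{y_2}$, all within $d_{MK}$-distance $t\,d(y_1,y_2)$ of $\delta_{y_1}$. Your proposed repair (confining supports to a common bounded region so that only finitely many atoms are admissible) does not rescue it either, for the same reason: finitely many atoms still support a whole simplex of measures. The paper's own proof of (b) silently establishes only the $\ve$-net property and never addresses properness. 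Fortunately nothing downstream depends on it --- Theorem \ref{dfp} requires only an $\ve$-net and an $\ve$-projection --- so the lemma should be read with ``proper'' deleted from item (b), after which both your argument and the paper's are complete.
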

\begin{proof}
  (a) Since $\hat{X}$ is proper, its compact sets are finite, and hence probability measures from $\mP(\hat{X})$ has finite support. This, together with the definition of $\tilde{\mP}(\hat{X})$, give (a).\\
(b) Choose $\mu\in\mP(X)$ and let $K\in\K(X)$ be such that $\mu(X\setminus K)=0$. Then the set
  $$\Omega_{\mu}:=\{ r^{-1}(y)\cap K \; | \; y \in \hat{X}\},$$
 is a measurable partition of $K$.

  Consider the set $\hat{K}:=\{y \in \hat{X} \; | \; r^{-1}(y)\cap K\neq \varnothing\}$ . We know that this set is finite, nominally $\hat{K}=\{y_1, ..., y_n\}$. Then we introduce the measure $\nu \in \tilde{\mathcal{P}}(\hat{X})$ by
  \begin{equation}\label{approx of K}
    \nu:= \sum_{i=1}^{n}   \mu(r^{-1}(y_i)\cap K) \, \delta_{y_{i}}.
  \end{equation}
  We claim that $d_{MK}(\mu,\nu) \leq \ve$. To see that consider $f \in \operatorname{Lip}_{1}(X, \mathbb{R})$ . Then
  $$\left|\int_{X} f(x) \mathrm{d} \mu(x)-\int_{X} f(x) \mathrm{d} \nu(x) \right| =
  \left|\sum_{i=1}^{m}\int_{r^{-1}(y_i)\cap K} f(x) \mathrm{d} \mu(x)- \sum_{i=1}^{m}   \mu(r^{-1}(y_i)\cap K) \, f(y_{i}) \right|=
  $$
   $$= \left|\sum_{i=1}^{m}\int_{r^{-1}(y_i)\cap K}  f(x) - f(y_{i}) \mathrm{d} \mu(x) \right|\leq\sum_{i=1}^{m}\int_{r^{-1}(y_i)\cap K}  \left|f(x) - f(y_{i})\right| \mathrm{d} \mu(x) \leq \ve
  $$
  because $f \in \operatorname{Lip}_{1}(X, \mathbb{R})$ and $r^{-1}(y) \subseteq \overline{B}_{\ve}(y)$.

  (c) We first show that the measure $\nu$ considered in previous point equals $r^\sharp\mu$. For a Borel set $B\subset X$, we have
$$
r^\sharp\mu(B)=\mu(r^{-1}(B))=\mu(r^{-1}(B)\cap K)=\mu(r^{-1}(B\cap\{y_1,...,y_n\})\cap K)=\sum_{i:y_i\in B}\mu(r^{-1}(y_i)\cap K)=\nu(B)
$$
and we are done. Thus it remains to prove that $r^\sharp(\mu)=\mu$ for $\mu\in \tilde{\mP}(X)$. If $\mu=\sum_{i=1}^{n}   a_i \delta_{y_{i}}\in\tilde{\mP}(\hat{X})$, then for every Borel set $B\subset X$,
$$
r^\sharp\mu(B)=\mu(r^{-1}(B))=\sum_{i:y_i\in r^{-1}(B)}a_i=\sum_{i:y_i\in (B)}a_i=\mu(B)
$$
so $r^\sharp\mu=\mu$.

  (d) Choose $\mu\in\tilde{\mP}(X)$ and any Borel set $B\subset X$. Then we have:
$$
r^\sharp\circ \overline{M}_\mS(\mu)(B)=\overline{M}_\mS(\mu)(r^{-1}(B))=
M(\mu,...,\mu)(r^{-1}(B))=\sum_{j=1}^Lp_j(\mu\times...\times \mu)(\phi_j^{-1}(r^{-1}(B)))=$$ $$=\sum_{j=1}^Lp_j(\mu\times...\times \mu)((r\circ \phi_j)^{-1}(B))=\sum_{j=1}^Lp_j(\mu\times...\times \mu)(((r\circ \phi_j)_{\vert \hat{X}^m})^{-1}(B))=$$ $$
=\sum_{j=1}^Lp_j(\mu\times...\times \mu)(((r\circ \phi_j)_{\vert \hat{X}^m})^{-1}(B\cap\hat{X}))=\overline{M}_{\hat{\mS}}(\mu_{\vert\hat{X}})(B\cap\hat{X})=e^\sharp\circ \overline{M}_{\hat{\mS}}(\mu).
$$
\end{proof}

Now we give  corollaries of the above lemma and of Theorem \ref{dfp}. Since the theory of IFSs is more widespread than that of GIFSs, we will give two versions separately - for IFSs and GIFSs.\\

A measure $\nu \in\tilde{\mathcal{P}}(\hat{X})$ will be called a \emph{discrete Hutchinson measure for $\mS$ with resolution $\delta$} if  $d_{MK}(\nu, \mu_{\mathcal{S}})\leq \delta$.

\begin{theorem}\label{discrete hutchinson measure2}
Let $(X,d)$ be a complete metric space and $\mS=(X,(\phi_j)_{j=1}^L,(p_j)_{j=1}^L)$ be a IFSp consisting of Banach contractions. Let $\ve>0$, ${\hat{X}}$ be a proper $\ve$-net, $r:X\to {\hat{X}}$ be a proper measurable $\ve$-projection on ${\hat{X}}$ and $\hat{\mS}:=(\hat{X},(\hat{\phi}_j)_{j=1}^L, (p_j)_{j=1}^L)$, where $\hat{\phi}_j=(r\circ \phi_j)_{\vert{\hat{X}}}$ is the discretization of $\phi_j$.\\
For any $\nu \in{{\mathcal{P}}}(\hat{X})$ and $n\in\N$,
\begin{equation}\label{approxDiscreteHutch}
d_{MK}(e^\sharp(M_{\hat{\mathcal{S}}}^n(\nu)),\mu_\mS)\leq\frac{\ve}{1-\alpha_\mS}+\alpha_\mS^n \, d_{MK}(e^\sharp(\nu),\mu_\mS),
\end{equation}
where $\mu_\mS$ is the  Hutchinson measure of $\mS$.\\
In particular,   there is $n_0\in\N$ such that for every $n\geq n_0$, $e^\sharp(M_{\hat{\mathcal{S}}}^n(\nu))$ is a discrete Hutchinson measure of $\mS$ with resolution $\frac{2\ve}{1-\alpha_\mS}$.
\end{theorem}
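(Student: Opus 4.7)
The proof will be a direct application of the discrete Banach fixed point theorem (Theorem \ref{dfp}) to the Markov operator $M_\mS$, leveraging the identifications established in Lemma \ref{net prob}.

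The plan is to verify the hypotheses of Theorem \ref{dfp} in the space $(\mP(X), d_{MK})$. First, since $X$ is complete, the metric space $(\mP(X), d_{MK})$ is complete. By Lemma \ref{rnf}, $M_\mS$ is a Banach contraction on $\mP(X)$ with Lipschitz constant at most $\alpha_\mS$, and its unique fixed point is the Hutchinson measure $\mu_\mS$. By Lemma \ref{net prob}(b), $\tilde{\mP}(\hat{X})$ is a proper $\ve$-net of $\mP(X)$, and by (c), $r^\sharp: \mP(X) \to \tilde{\mP}(\hat{X})$ is an $\ve$-projection. Thus the data $(M_\mS, \ve, \tilde{\mP}(\hat{X}), r^\sharp)$ fit the framework of Theorem \ref{dfp}, and the $r^\sharp$-discretization is
$$\hat{M}_\mS := (r^\sharp \circ M_\mS)_{|\tilde{\mP}(\hat{X})} : \tilde{\mP}(\hat{X}) \to \tilde{\mP}(\hat{X}).$$

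Next I would translate iterates of $\hat{M}_\mS$ on $\tilde{\mP}(\hat{X})$ into iterates of $M_{\hat{\mS}}$ on $\mP(\hat{X})$. By Lemma \ref{net prob}(d) (specialized to $m=1$, so $\overline{M}_\mS = M_\mS$), we have $\hat{M}_\mS \circ e^\sharp = e^\sharp \circ M_{\hat{\mS}}$ on $\mP(\hat{X})$. A straightforward induction on $n$ then yields
$$\hat{M}_\mS^n(e^\sharp \nu) = e^\sharp(M_{\hat{\mS}}^n(\nu))$$
for every $\nu \in \mP(\hat{X})$ and $n \in \N$.

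Applying Theorem \ref{dfp} with starting point $e^\sharp \nu \in \tilde{\mP}(\hat{X})$ now gives
$$d_{MK}(\hat{M}_\mS^n(e^\sharp \nu), \mu_\mS) \leq \frac{\ve}{1-\alpha_\mS} + \alpha_\mS^n\, d_{MK}(e^\sharp \nu, \mu_\mS),$$
and substituting the identity from the previous step yields the claimed estimate (\ref{approxDiscreteHutch}). For the ``in particular'' statement, it suffices to choose $n_0$ so that $\alpha_\mS^{n_0} d_{MK}(e^\sharp \nu, \mu_\mS) \leq \frac{\ve}{1-\alpha_\mS}$; this is possible since $\alpha_\mS < 1$.

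There is no real obstacle here: all the heavy lifting (contractivity of $M_\mS$, the net structure on $\mP(X)$, and the compatibility of discretization with $e^\sharp$) has been done in Lemmas \ref{rnf} and \ref{net prob}, and the conclusion is essentially a corollary of Theorem \ref{dfp}. The only point requiring mild care is the identification between $\mP(\hat{X})$ and $\tilde{\mP}(\hat{X})$ via the injective push-forward $e^\sharp$, which is needed to make sense of iterating $M_{\hat{\mS}}$ on $\mP(\hat{X})$ while simultaneously iterating $\hat{M}_\mS$ on $\tilde{\mP}(\hat{X}) \subset \mP(X)$.
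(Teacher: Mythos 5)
Your proposal is correct and follows exactly the route the paper intends: the theorem is presented there as a direct corollary of Theorem~\ref{dfp} applied to $M_\mS$ on the complete space $(\mP(X),d_{MK})$, using Lemma~\ref{rnf} for contractivity and Lemma~\ref{net prob} for the proper $\ve$-net $\tilde{\mP}(\hat{X})$, the $\ve$-projection $r^\sharp$, and the intertwining $\hat{M}_\mS\circ e^\sharp=e^\sharp\circ M_{\hat{\mS}}$. Your explicit induction transferring iterates of $\hat{M}_\mS$ to iterates of $M_{\hat{\mS}}$ and the choice of $n_0$ for the resolution bound are precisely the details the paper leaves implicit.
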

\begin{theorem}\label{discrete hutchinson measure}
In the frame of the above theorem, assume that $\mS=(X,(\phi_j)_{j=1}^L,(p_j)_{j=1}^L)$ is a GIFSp consisting of $(a_0,...,a_{m-1})$-contractions, where $\sum_{i=0}^{m-1}a_i<1$.\\
For any $\nu \in{{\mathcal{P}}}(\hat{X})$ and $n\in\N$,
\begin{equation}\label{approxDiscreteHutch1}
d_{MK}(e^\sharp(\overline{M}_{\hat{\mathcal{S}}}^n(\nu)),\mu_\mS)\leq\frac{\ve}{1-\alpha}+\alpha^n \, d_{MK}(e^\sharp(\nu),\mu_\mS),
\end{equation}
where $\mu_\mS$ is the  Hutchinson measure of $\mS$ and $\alpha:=\sum_{i=0}^{m-1}a_i$.\\
In particular,  there is $n_0\in\N$ such that for every $n\geq n_0$, $e^\sharp(\overline{M}_{\hat{\mathcal{S}}}^n(\nu))$ is a discrete Hutchinson measure of $\mS$ with resolution $\frac{2\ve}{1-\alpha_\mS}$.
\end{theorem}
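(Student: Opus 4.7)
The plan is to derive the theorem as a direct application of the abstract discrete fixed point theorem (Theorem \ref{dfp}) to the ``diagonal'' Banach contraction $\overline{M}_\mS:\mP(X)\to\mP(X)$, and then to transport the resulting estimate through the identifications established in Lemma \ref{net prob}. First, the Corollary preceding this section gives that $\overline{M}_\mS$ is a Banach contraction on the complete metric space $(\mP(X),d_{MK})$ with Lipschitz constant at most $\alpha=\sum_{i=0}^{m-1}a_i<1$, and its unique fixed point is the generalized Hutchinson measure $\mu_\mS$. This puts us in exactly the setting of Theorem \ref{dfp}.

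By Lemma \ref{net prob}(b)--(c), $\tilde{\mP}(\hat{X})$ is a proper $\ve$-net of $\mP(X)$ and $r^\sharp$ is an $\ve$-projection onto it. Hence Theorem \ref{dfp} applied to the $r^\sharp$-discretization $\hat{\overline{M}_\mS}:=(r^\sharp\circ \overline{M}_\mS)_{|\tilde{\mP}(\hat{X})}$ yields, for every $\tilde{\nu}\in\tilde{\mP}(\hat{X})$ and every $n\in\N$,
$$
d_{MK}\bigl(\hat{\overline{M}_\mS}^{\,n}(\tilde{\nu}),\mu_\mS\bigr)\leq \frac{\ve}{1-\alpha}+\alpha^n\,d_{MK}(\tilde{\nu},\mu_\mS).
$$

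Next I would invoke Lemma \ref{net prob}(d), which asserts $\hat{\overline{M}_\mS}=e^\sharp\circ \overline{M}_{\hat{\mS}}$ on $\tilde{\mP}(\hat{X})$, where $e^\sharp:\mP(\hat{X})\to\tilde{\mP}(\hat{X})$ is the natural bijection (its inverse being restriction to $\hat{X}$). Setting $\tilde{\nu}=e^\sharp(\nu)$ for a given $\nu\in\mP(\hat{X})$ and iterating the identity of Lemma \ref{net prob}(d) via induction on $n$, one obtains
$$
\hat{\overline{M}_\mS}^{\,n}(e^\sharp(\nu))=e^\sharp\bigl(\overline{M}_{\hat{\mS}}^{\,n}(\nu)\bigr),
$$
and plugging this into the previous display yields exactly the estimate (\ref{approxDiscreteHutch1}). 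The ``in particular'' clause is then immediate: since $\alpha<1$, choose $n_0\in\N$ with $\alpha^{n_0}d_{MK}(e^\sharp(\nu),\mu_\mS)\leq \frac{\ve}{1-\alpha}$, and then for every $n\geq n_0$ the right-hand side is bounded by $\frac{2\ve}{1-\alpha}$, so $e^\sharp(\overline{M}_{\hat{\mS}}^{\,n}(\nu))$ is a discrete Hutchinson measure of $\mS$ with resolution $\frac{2\ve}{1-\alpha_\mS}$ (using $\alpha_\mS\leq \alpha$).

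The only subtlety that requires care is the inductive identity $\hat{\overline{M}_\mS}^{\,n}\circ e^\sharp=e^\sharp\circ \overline{M}_{\hat{\mS}}^{\,n}$. One must check that the iteration is well-defined on the appropriate subspaces: $\overline{M}_{\hat{\mS}}$ sends $\mP(\hat{X})$ into itself by its very definition as the Markov operator of the discretized GIFSp $\hat{\mS}$, and $\hat{\overline{M}_\mS}$ sends $\tilde{\mP}(\hat{X})$ into itself because $r^\sharp$ projects into $\tilde{\mP}(\hat{X})$. Once this bookkeeping is in place, the induction step is just one further application of Lemma \ref{net prob}(d). Everything else reduces to verbatim quotation of Theorem \ref{dfp}; in particular, Theorem \ref{discrete hutchinson measure2} follows by exactly the same argument, using Lemma \ref{rnf} (with $m=1$) in place of the Corollary.
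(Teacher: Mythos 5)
Your proposal is correct and follows exactly the route the paper intends: the paper gives no written proof of this theorem, presenting it (together with Theorem \ref{discrete hutchinson measure2}) simply as a corollary of Lemma \ref{net prob} and Theorem \ref{dfp} applied to the Banach contraction $\overline{M}_\mS$ with fixed point $\mu_\mS$, which is precisely the argument you spell out, including the bookkeeping behind the inductive identity $\hat{\overline{M}_\mS}^{\,n}\circ e^\sharp=e^\sharp\circ\overline{M}_{\hat{\mS}}^{\,n}$. The only blemish is your closing parenthetical: since $\alpha_\mS\leq\alpha$ gives $\frac{2\ve}{1-\alpha_\mS}\leq\frac{2\ve}{1-\alpha}$, the bound $\frac{2\ve}{1-\alpha}$ you derive does not literally yield resolution $\frac{2\ve}{1-\alpha_\mS}$ --- but this traces back to an inconsistency in the paper's own statement (the intended constant is $\alpha$, cf.\ the redefinition $\alpha_\mS:=\sum_{i=0}^{m-1}a_i$ at the start of Section \ref{sec:discret determ algor for hutch}), not to a gap in your argument.
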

Let us explain the thesis. Starting with an IFSp (or with a GIFSp) $\mS=(X,(\phi_j)_{j=1}^L)$ consisting of generalized Banach contractions (or $(a_0,..,a_{m-1})$-contractions), we switch to $\hat{\mS}:=(\hat{X},(\hat{\phi}_j)_{j=1}^{L})$, which is the IFSp (or GIFSp) consisting of discretizations of maps from $\mS$ to $\ve$-net $\hat{X}$. Then, picking any $v\in\mP(\hat{X})$, it turns out that the sequence of iterates $(M_{\hat{\mS}}^n(v))$ (or the sequence of iterates $(\overline{M}_{\hat{\mS}}^n(v))$ in the GIFS case) gives an approximation to the Hutchinson measure $\mu_\mS$ of $\mS$ with resolution $\delta>\frac{\ve}{1-\alpha_\mS}$ (more precisely, their natural extensions from $\tilde{\mP}(\hat{X})$).

\begin{remark}\emph{
In the formulation of the above results we had to distinguish measures from $\mP(\hat{X})$ with their natural extensions from $\tilde{\mP}(\hat{X})$. However, it is rather clear that we can identify $\mu\in\mP(\hat{X})$ with $e^\sharp(\mu)\in\tilde{\mP}(\hat{X})$, and in the remaining part of the paper we will make this identification. This will not lead to any confusions and will simplify notations.}
\end{remark}

\section{Discrete Deterministic Algorithms for Hutchinson Measures}\label{sec:discret determ algor for hutch}
Within this section we assume that $\mS=(X,(\phi_j)_{j=1}^L,(p_j)_{j=1}^L)$ is a GIFSp on a complete metric space $X=(X,d)$ comprising of $(a_0,...,a_{m-1})$-contractions, and $\alpha_{\mS}:=\sum_{i=0}^{m-1}a_i$. Also, $\hat{X}$ is a proper $\ve$-net of $X$, $r:X\to\hat{X}$ is a Borel measurable $\ve$-projection and $\hat{\mS}:=(\hat{X},(\hat{\phi}_j)_{j=1}^L,(p_j)_{j=1}^L)$, where $\hat{\phi}_j:=(r\circ \phi_j)_{\vert X^m}$. All other symbols have the same meaning as in earlier sections.\\
Once again we point out that we formulate all results for GIFSs case, but if $m=1$, then we get the classical IFS case and then $\mS$ consists of Banach contractions, $\overline{F}_\mS=F_\mS$, the Hutchinson operator and $\overline{M}_\mS=M_\mS$, the Markov operator.
\subsection{A description of discrete Markov operator}

\begin{lemma}\label{filip6}
In the above frame, if $\mu=\sum_{i=1}^nv_i\delta_{y_i}\in\mP(\hat{X})$, then \begin{equation}\label{filip7}\on{supp}(\overline{M}_{\hat{\mS}}(\mu))=\{\phi_j(y_{i_0},...,y_{i_{m-1}}):j=1,...,L,\;i_0,...,i_{m-1}=1,...,n\}\end{equation} and enumerating this set by $\{z_1,...,z_{m'}\}$, we have:
\begin{equation}
\overline{M}_{\hat{\mS}}(v)=\sum_{q=1}^{m'}{v'_q}\delta_{z_q},
\end{equation}
where
\begin{equation}\label{definit new weight}
v'_{q}=\sum_{\hat{\phi}_j(y_{i_0},...,y_{i_{m-1}})=z_q} p_j v_{i_0}\cdot...\cdot v_{i_{m-1}}.
\end{equation}
\end{lemma}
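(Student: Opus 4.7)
The plan is to obtain the formula by direct substitution of $\mu=\sum_{i=1}^n v_i\delta_{y_i}$ into the definition
\[
\overline{M}_{\hat{\mS}}(\mu) = M_{\hat{\mS}}(\mu,\ldots,\mu) = \sum_{j=1}^L p_j\,\hat{\phi}_j^\sharp(\mu\times\cdots\times\mu),
\]
and then simplifying. The key observation is that products and push-forwards behave in an elementary way on finite combinations of Dirac masses, so everything collapses to a finite sum that can be reorganized by its atoms.

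First, I would expand the $m$-fold product. By distributivity of product measures over finite sums (verified first on measurable rectangles and then extended), one gets
\[
\mu\times\cdots\times\mu = \sum_{i_0,\ldots,i_{m-1}=1}^n v_{i_0}\cdots v_{i_{m-1}}\,\delta_{(y_{i_0},\ldots,y_{i_{m-1}})}.
\]
Next, I would push this forward by each $\hat{\phi}_j$. The identity $w^\sharp \delta_x = \delta_{w(x)}$ is immediate from the definition of push-forward, and by linearity one obtains
\[
\hat{\phi}_j^\sharp(\mu\times\cdots\times\mu) = \sum_{i_0,\ldots,i_{m-1}=1}^n v_{i_0}\cdots v_{i_{m-1}}\,\delta_{\hat{\phi}_j(y_{i_0},\ldots,y_{i_{m-1}})}.
\]
Plugging this back into the definition of $\overline{M}_{\hat{\mS}}$ yields
\[
\overline{M}_{\hat{\mS}}(\mu) = \sum_{j=1}^L \sum_{i_0,\ldots,i_{m-1}=1}^n p_j\,v_{i_0}\cdots v_{i_{m-1}}\,\delta_{\hat{\phi}_j(y_{i_0},\ldots,y_{i_{m-1}})}.
\]
The points $\hat{\phi}_j(y_{i_0},\ldots,y_{i_{m-1}})$ range over a finite subset of $\hat{X}$; enumerating its distinct elements as $z_1,\ldots,z_{m'}$ and collecting coefficients of coincident Dirac atoms produces exactly (\ref{definit new weight}). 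Finally, since $p_j>0$ for each $j$ and $v_i>0$ for each atom in the representation of $\mu$, each weight $v'_q$ is strictly positive, so $\{z_1,\ldots,z_{m'}\}$ is precisely $\on{supp}(\overline{M}_{\hat{\mS}}(\mu))$, which establishes (\ref{filip7}).

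The proof is essentially bookkeeping, so there is no genuine obstacle. The only point that warrants a sentence of justification is the grouping step, where one must note that distinct index tuples $(j,i_0,\ldots,i_{m-1})$ may yield the same point $z_q$, and that all of their contributions must be summed — this is precisely what the constraint $\hat{\phi}_j(y_{i_0},\ldots,y_{i_{m-1}})=z_q$ under the sum in (\ref{definit new weight}) enforces. Alternatively, one could deduce the support claim directly from Lemma~\ref{filip5a}, since $\on{supp}(\mu)=\{y_1,\ldots,y_n\}$ and $\overline{F}_{\hat{\mS}}(\{y_1,\ldots,y_n\})$ is exactly the right-hand side of (\ref{filip7}) (with $\hat{\phi}_j$ in place of $\phi_j$, which is the intended reading).
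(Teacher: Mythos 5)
Your proof is correct and follows essentially the same route as the paper: both arguments expand the product measure $\mu\times\cdots\times\mu$ over the atoms $(y_{i_0},\ldots,y_{i_{m-1}})$, push forward by each $\hat{\phi}_j$, and regroup the resulting weighted Dirac masses by their common image points $z_q$ (the paper phrases this by evaluating $\overline{M}_{\hat{\mS}}(\mu)(B)$ on an arbitrary Borel set $B$, while you manipulate the measures directly, but the computation is identical). Your closing remarks --- that the support identification requires $p_j>0$ and $v_i>0$, and that $\hat{\phi}_j$ is the intended reading in~(\ref{filip7}) --- are sound and in fact slightly more careful than the paper's own write-up.
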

\begin{proof}
For every Borel set $B\subset X$,
$$
\overline{M}_{\hat{\mS}}(v)(B)=\sum_{j=1}^Lp_j(\mu\times...\times\mu)(\phi_{j}^{-1}(B))=
\sum_{j=1}^Lp_j\sum_{(y_{i_0},...,y_{i_{m-1}})\in\phi^{-1}(B)}v_{i_0}...v_{i_{m-1}}=
$$
$$
=\sum_{j=1}^Lp_j\sum_{\phi_j(y_{i_0},...,y_{i_{m-1}})\in B}v_{i_0}...v_{i_{m-1}}=\sum_{z_q\in B}v_q'.
$$

\end{proof}
\begin{remark}\label{filip8}\emph{
Looking at the above lemma we see that, in the frames of Theorems \ref{discrete hutchinson measure} and \ref{discrete hutchinson measure2}, when iterating the operator $\overline{M}_{\hat{\mS}}(\mu)$, we automatically get successive iterations of $\overline{F}_{\hat{\mS}}(\on{supp}(\mu))$. In turn, the support of obtained discrete Hutchinson measure $\mu_\delta$ for $\mS$ with resolution $\delta$ is the attractor $A_\delta$ of $\mS$ with resolution $\delta$. This is a discrete version of the classical result where the actual Hutchinson measure of a IFS has support on its attractor.}
\end{remark}
  The next example shows that the discrete Markov operator may not be contractive even if the underlying IFS (of GIFS) consists of (generalized) Banach contractions.
\begin{example}\label{exxx}
Let $X=[0,2]$, and consider it with the euclidean metric. Set $\hat{X}=\left\{\frac{1}{2},1\right\}$ and define
\begin{equation*}
r(x)=\left\{\begin{array}{ccc}1&\mbox{if}&x\neq\frac{1}{2}\\
\frac{1}{2}&\mbox{if}&x=\frac{1}{2}
\end{array}\right..
\end{equation*}
Clearly, $r$ is $1$-projection of $X$ to $\hat{X}$. Now let $\mS=(X,(\phi),(1))$ be the IFSp where
$
\phi(x):=\frac{x}{2}.
$
As can be easily calculated,
\begin{equation*}
\hat{\phi}(x):=(r\circ \phi)_{\vert\hat{X}}(x)=\left\{\begin{array}{ccc}1&\mbox{if}&x=\frac{1}{2}\\
\frac{1}{2}&\mbox{if}&x=1\end{array}\right..
\end{equation*}
Then,
\begin{equation*}
M_{\hat{\mS}}(a\delta_{\frac{1}{2}}+(1-a)\delta_1)=(1-a)\delta_{\frac{1}{2}}+a\delta_1
\end{equation*}
and hence $M^{(2)}_{\hat{\mS}}(\mu)=\mu$, for every $\mu\in\mP(\hat{X})$. In particular, $M_{\hat{\mS}}$ cannot be Banach contraction.
\end{example}

For practical purposes, in presented algorithms we compute the set $F_{\hat{\mS}}(\supp(\nu))$ according to (\ref{filip7}). Then, we enumerate $F_{\hat{\mS}}(\supp(\nu)):=\{z_{q},  1\leq q \leq m'\}$ and, for each $q$ we define coefficients $a_q'$ according to formula (\ref{definit new weight}).

Thus, the output of our algorithm is a bitmap image with the equal shape of $A_{\delta}$ but each pixel represents the measure $\nu_{\delta}$ of the atom $\{y\}$, that is, a gray scale histogram. More than that, the value $\nu_{\delta}(\{y\})$ represents an approximation of the value
\begin{equation}
   \nu_{\delta}(\{y\}) \simeq \mu_{\mS}(r^{-1}(\{y\})).
\end{equation}

\subsection{Uniform $\ve$-nets}

In order to build an algorithm we are going to fix some notation and consider a special type of $\ve$-nets on rectangles in Euclidean spaces. In particular, we assume that $$X=[a_1,b_1]\times...\times[a_d,b_d]\subset \R^d.$$

Given $\varepsilon>0$ we consider the sequence $x[i] \in \mathbb{R}, i \in \mathbb{Z}$ such that $x[i]<x[i+1]$ and $x[i+1]-x[i]=\varepsilon$, for all $i \in \mathbb{Z}$. Then the set $\hat{X} \subseteq \mathbb{R}^{d}$ given by
\begin{equation}\label{net}
  \hat{X}:= \{(x[i_1],..., x[i_d])\in \hat{X} \; | \; i_1,...,i_d \in \mathbb{Z} \}
\end{equation}
is a proper $\ve$-net for $X$ with respect to the Euclidean distance.

We need also to define an  $\ve$-projection on ${\hat{X}}$.
Consider the auxiliary function $q: \mathbb{R} \to \{x[i] \in \mathbb{R}, i \in \mathbb{Z}\}$ given by
\begin{equation}\label{aux_q}
  q_{1}(t):=
  \left\{
    \begin{array}{ll}
       x[i_1], &  \text{ if }   t \leq x[i_1]\\
      x[i], & \text{ if } x[i]\leq t <\frac{x[i]+x[i+1]}{2},\; i\geq m \\
      x[i+1], &  \text{ if }\frac{x[i]+x[i+1]}{2}\leq t \leq x[i+1],\; i+1 \leq {i_{n_1}}\\
     x[i_{n_1}], &  \text{ if }   t \geq x[i_{n_1}]\\
    \end{array}
  \right.
\end{equation}
where $x[i_1]$ is the  smallest point such that $a_{1} \leq x[i]$ and $x[i_{n_1}]$ is the  biggest point such that $b_1 \geq x[i]$. The functions $q_{2},...,q_d$ refer to other coordinates and are defined by the same formula replacing $[a_1,b_1]$ by $[a_i,b_i]$, $i=2,...,q$.

Then, the actual $\ve$-projection on ${\hat{X}}$ is given by
\begin{equation}\label{projection}
  r(x_1,...,x_d):=(q_1(x_1),...,q_d(x_d)).
\end{equation}
We notice that $r$ is clearly proper and Borel measurable because $r^{-1}(v_1,...,v_d)$  is an $F_\sigma$ set  for any $(v_1,...,v_d) \in {\hat{X}}$.
Finally, for a measure $\nu\in\mP(\hat{X})$, let us adopt the notation
\begin{equation}
  \nu=\sum_{k=1}^{n}   \nu_{i^1_{k},...,i^d_{k}} \delta_{(x_1[i^1_{k}],...,x_d[i^d_{k}])},
\end{equation}
where $(x_1[i^1_{k}],...,x_d[i^d_{k}]) \in \hat{X}$ and $\sum_{k=1}^{n}   \nu_{i^1_{k},...,i^d_{k}}=1$.

\subsection{The IFS and GIFS algorithms}

Here we present an algorithm to generate discrete Hutchinson measure for $\mS$ with a desired resolution $\delta$, as well an attractor of $\mS$ with resolution $\delta$.
{\tt
\begin{tabbing}
aaa\=aaa\=aaa\=aaa\=aaa\=aaa\=aaa\=aaa\= \kill
     \> \texttt{GIFSMeasureDraw($\mS$)}\\
     \> {\bf input}: \\
     \> \>  \> $\delta>0$, the resolution.\\
     \> \>  \> $K \subseteq \hat{X}$, any finite and not empty subset (a list of points in $\hat{X}$).\\
     \> \>  \> $\nu$, any probability measure such that $\supp (\nu)= K$. \\
     \> \>  \> The diameter $D$ of a ball in $(X,d)$ containing $A_{\mS}$ and $K$.\\
     \> {\bf output:} \\
     \> \> \>  A bitmap representing a discrete attractor with resolution  at most $\delta$.\\
     \> \> \> A bitmap  image representing a discrete Hutchinson measure
     with resolution  at most $\delta$.\\
     \> {\bf Compute:}\\
     \> \> \> $\displaystyle \alpha_\mS:=\sum_{i=0}^{m-1}a_i$\\
     \> \> \> $\varepsilon >0$ and $N \in \mathbb{N}$ such that $\frac{\ve}{1-\alpha_\mS}+\alpha_\mS^N \, D < \delta$\\
     \> {\bf Initialize}  $\mu:=0$ and $W:=\varnothing$ \\
     \> {\bf for n from 1 to N do}\\
     \>  \>{\bf for $\ell$ from 1 to $\operatorname{Card}(K)$ do}\\
     \>  \>  \>  \>{\bf for $j$ from $1$ to $L$ do}\\
     \>  \>  \>  \>  \>$y_{i_0}:=K[\ell][1]$,..., $y_{i_{m-1}}:=K[\ell][m]$\\
     \>  \>  \>  \>  \>$(x_1[i^1],..., x_d[i^d]):=\hat{\phi}_{j}(y_{i_0},...,y_{i_{m-1}})$\\
     \>  \>  \>  \>  \>If $(x_1[i^1],..., x_d[i^d]) \not\in W$ then $W:=W \cup (x_1[i_1],..., x_d[i_d])$\\
     \>  \>  \>  \>  \>$\nu_{(x_1[i^1],..., x_d[i^d])}:=\nu_{(x_1[i^1],..., x_d[i^d])} +  p_{j} \nu_{y_0}\cdot...\cdot\nu_{y_{m-1}}$\\
     \>  \>  \>  \>{\bf end do} \\
     \>  \>{\bf end do} \\
     \> \> $K:=W$ and $W:=\varnothing$ \\
     \> \> $\nu:=\mu$ and $\mu:=0$\\
     \>{\bf end do}\\
     \>{\bf return: Print} $K$ and $\nu$ \\
\end{tabbing}}

\begin{remark} \label{analogy Hutchinson Measure GIFS}\emph{
By construction, the measure $\mu_{N}:=\overline{M}_{\hat{\mathcal{S}}}^{N}(\nu)$ has support on the finite set $K_{N}:=\overline{F}_{\hat{\mS}}^{N}(K)$ where $K=\supp(\nu)$. Recalling Remark \ref{filip8}, this shows that the discrete Hutchinson  measure $\nu_{\delta}$, with resolution $\delta$, is actually a measure of probability with finite support on a discrete fractal $A_{\delta}$ with resolution $\delta$. }
\end{remark}

\subsection{IFS examples}

  We have run our experiments using \textsc{Fortran 95} implementations of our algorithms. The computer used has an Intel i5-6400T 2.20GHz CPU with 8 GB of RAM.

\begin{example}\label{ex_barns_table}
  Using our algorithms we can recover the results of \cite[Chapter IX]{Bar}, for IFS with probabilities. In page 331, Table IX.1, that author considers the IFSp given by
  \[\mS:
\left\{
  \begin{array}{ll}
      \phi_1(x,y)   & =(0.5 x , \; 0.5 y ) \\
      \phi_2(x,y)   & =(0.5 x + 0.5, \; 0.5 y ) \\
      \phi_3(x,y)   & =(0.5 x , \; 0.5 y + 0.5) \\
      \phi_4(x,y)   & =(0.5 x + 0.5, \; 0.5 y + 0.5) \\
  \end{array}
\right.
\]
with the probabilities $(p_{1}=0.1, \; p_{2}=0.2, \;p_{3}= 0.3, \;p_{4}=0.4)$.  The comparison is made in Figure~\ref{Barns_table_Pic}.

\begin{figure}[ht]
  \centering
  \includegraphics[width=4cm,frame]{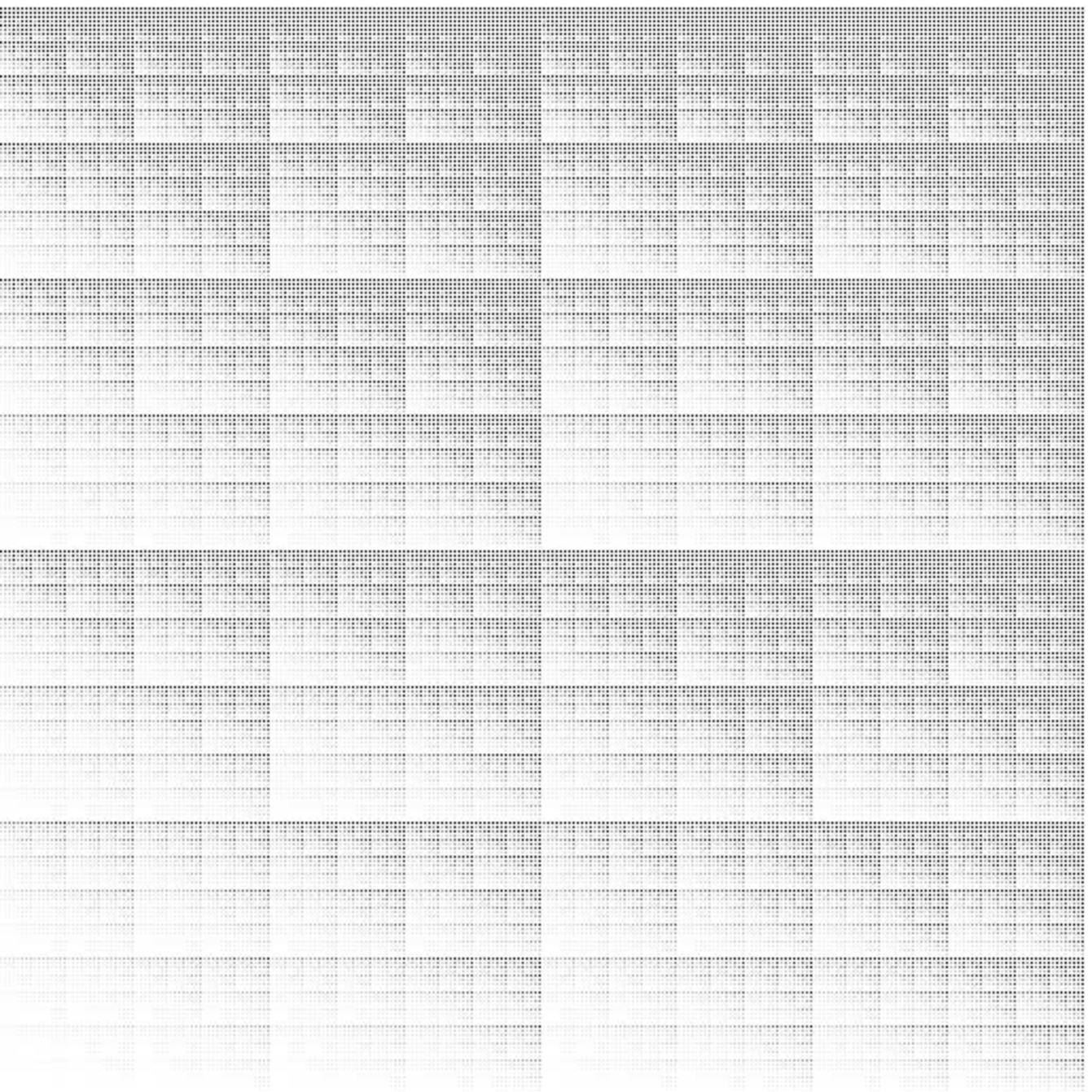}\;
  \includegraphics[width=4cm, height=4cm,frame]{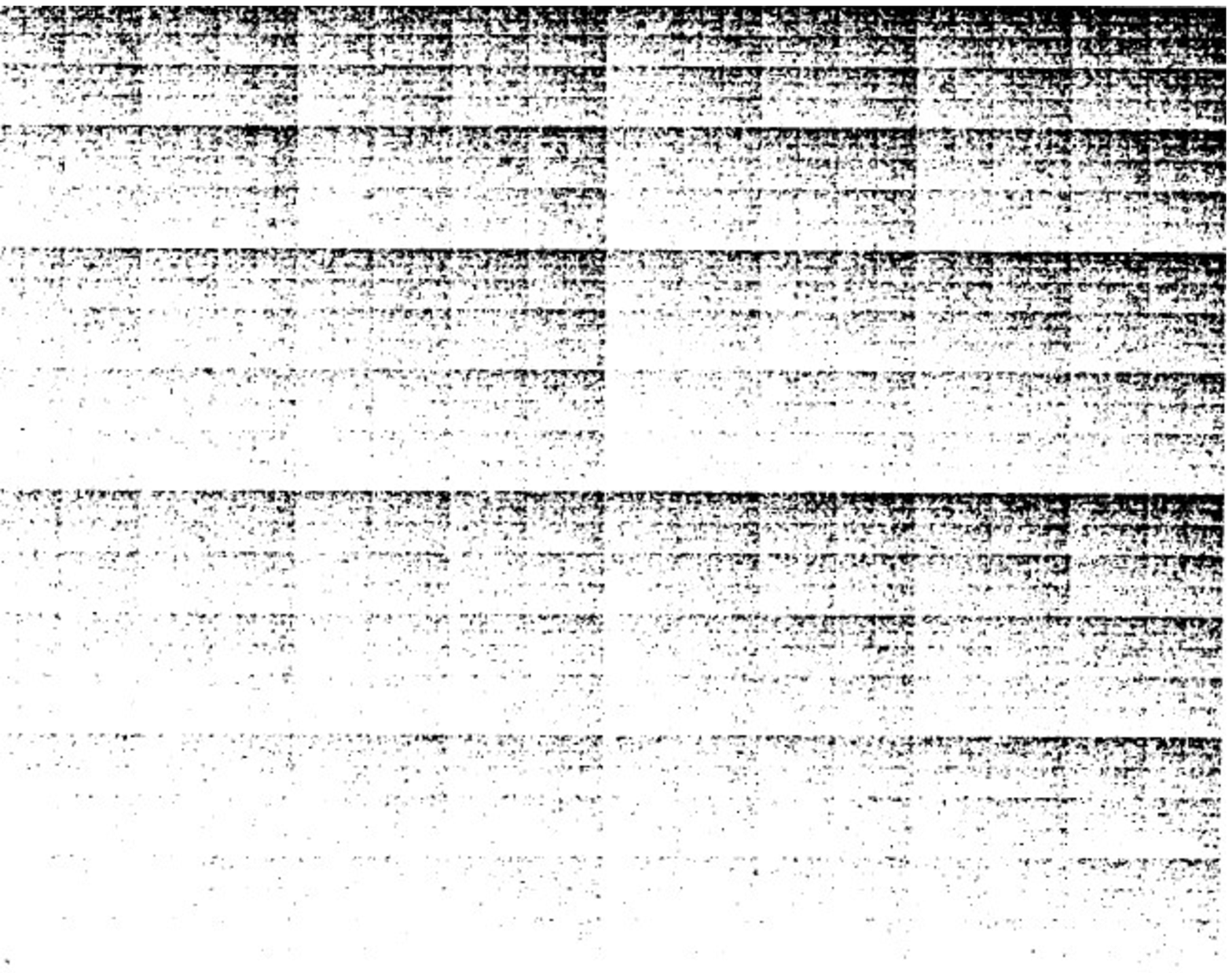}
  \caption{From the left to the right the output of the algorithm \texttt{GIFSMeasureDraw}($\mS$) after 8 iterations, with $512\times512$ pixels and the picture IX.247 obtained in  \cite{Bar} through a random process with 100.000 iterations.}\label{Barns_table_Pic}
  \end{figure}

\end{example}

\begin{example}\label{maple measures} This example is a classic geometric fractal, the Maple Leaf. The approximation of the attractor by the algorithm \texttt{IFSDraw($\mS$)} from \cite{DOS} is presented in Figure~\ref{MapleMeasurePic} and the approximation of the discrete Hutchinson  measure, through \texttt{GIFSMeasureDraw($\mS$)}, is presented in the same figure.  Consider $X=[-2,2]^2$ and the IFS $\phi_1,..., \phi_4: X \to X$  with probabilities $(p_j)_{j=1}^{L=4}$ where
\[\mS:
\left\{
  \begin{array}{ll}
      \phi_1(x,y)   & = (0.8 x + 0.1, 0.8 y + 0.04)\\
      \phi_2(x,y)   & = (0.5 x + 0.25, 0.5 y + 0.4)\\
      \phi_3(x,y)   & = (0.355 x - 0.355 y +0.266,  0.355 x + 0.355 y + 0.078)\\
      \phi_4(x,y)   & = (0.355 x + 0.355 y +0.378,  -0.355 x + 0.355 y + 0.434)\\
  \end{array}
\right.
\]
  As we can see, when the initial probabilities are small on the index of a map responsible for a part of the fractal attractor, the Hutchinson measure is very little concentrated on that part. For example, if we choose equal probability  we will have a much more equal distribution as in Figure~\ref{MapleMeasurePicUnif}.
 \begin{figure}[ht]
  \centering
  \includegraphics[width=4cm,frame]{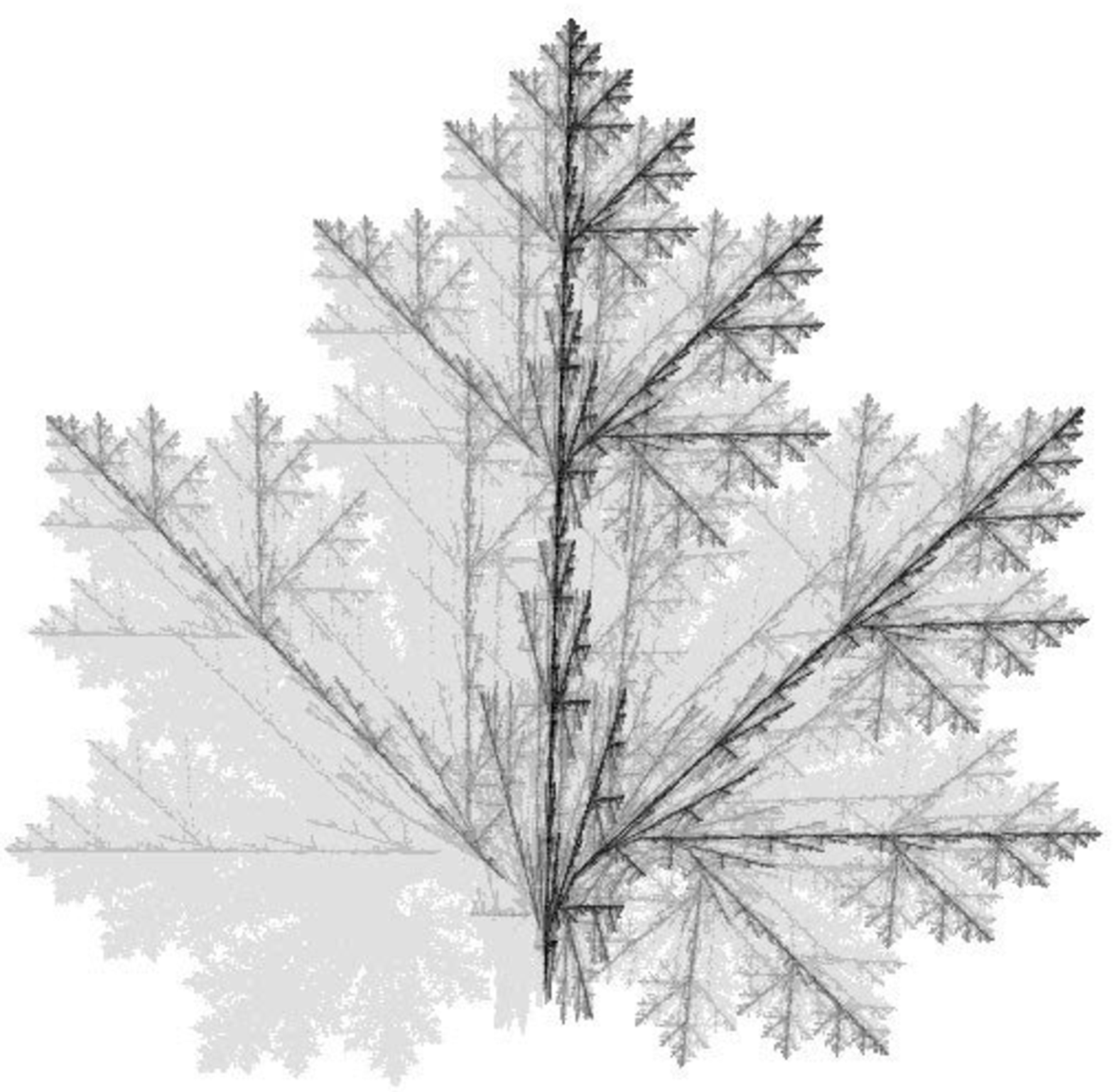}\;
  \includegraphics[width=4cm,frame]{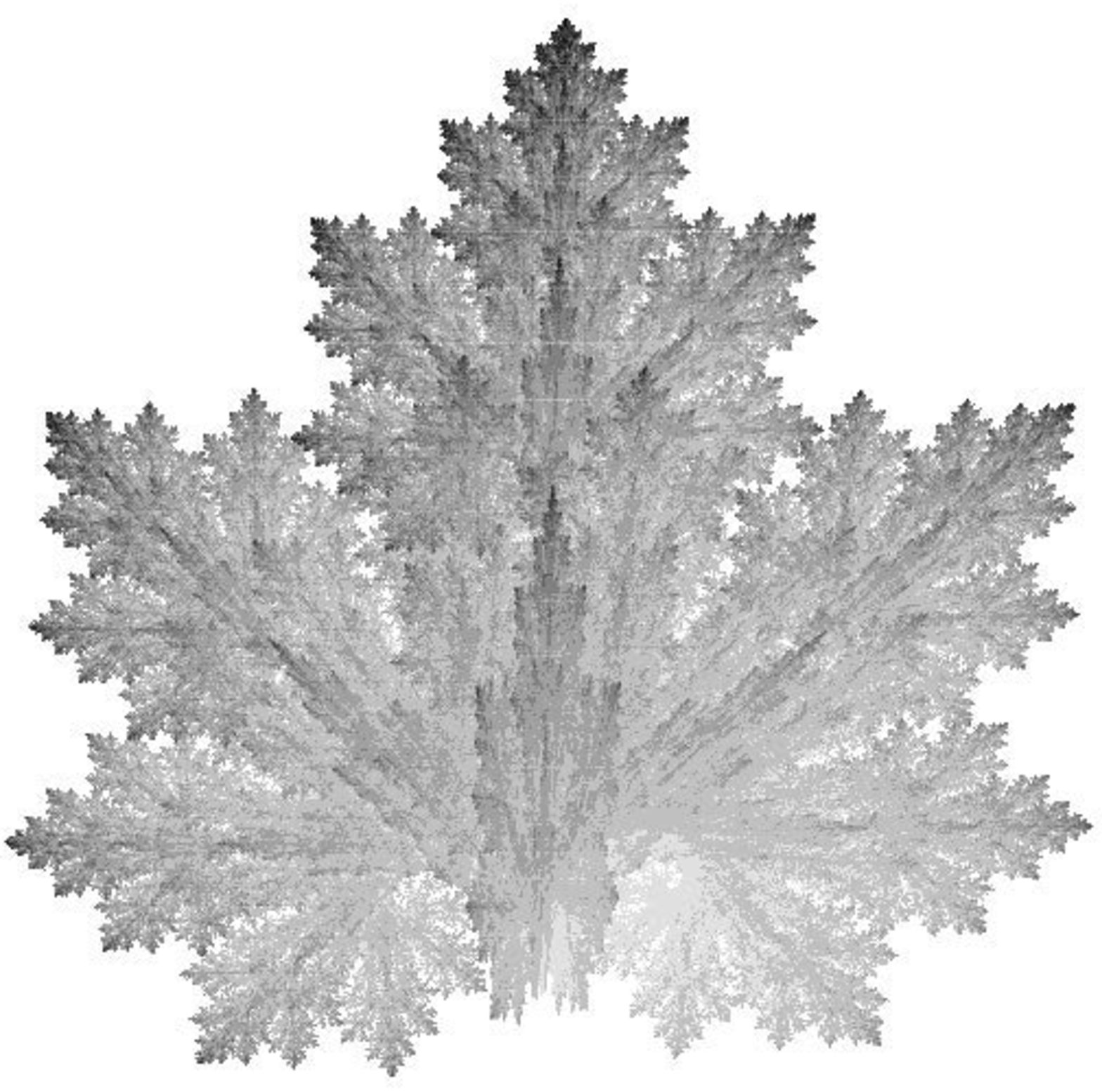}\;
  \includegraphics[width=4cm,frame]{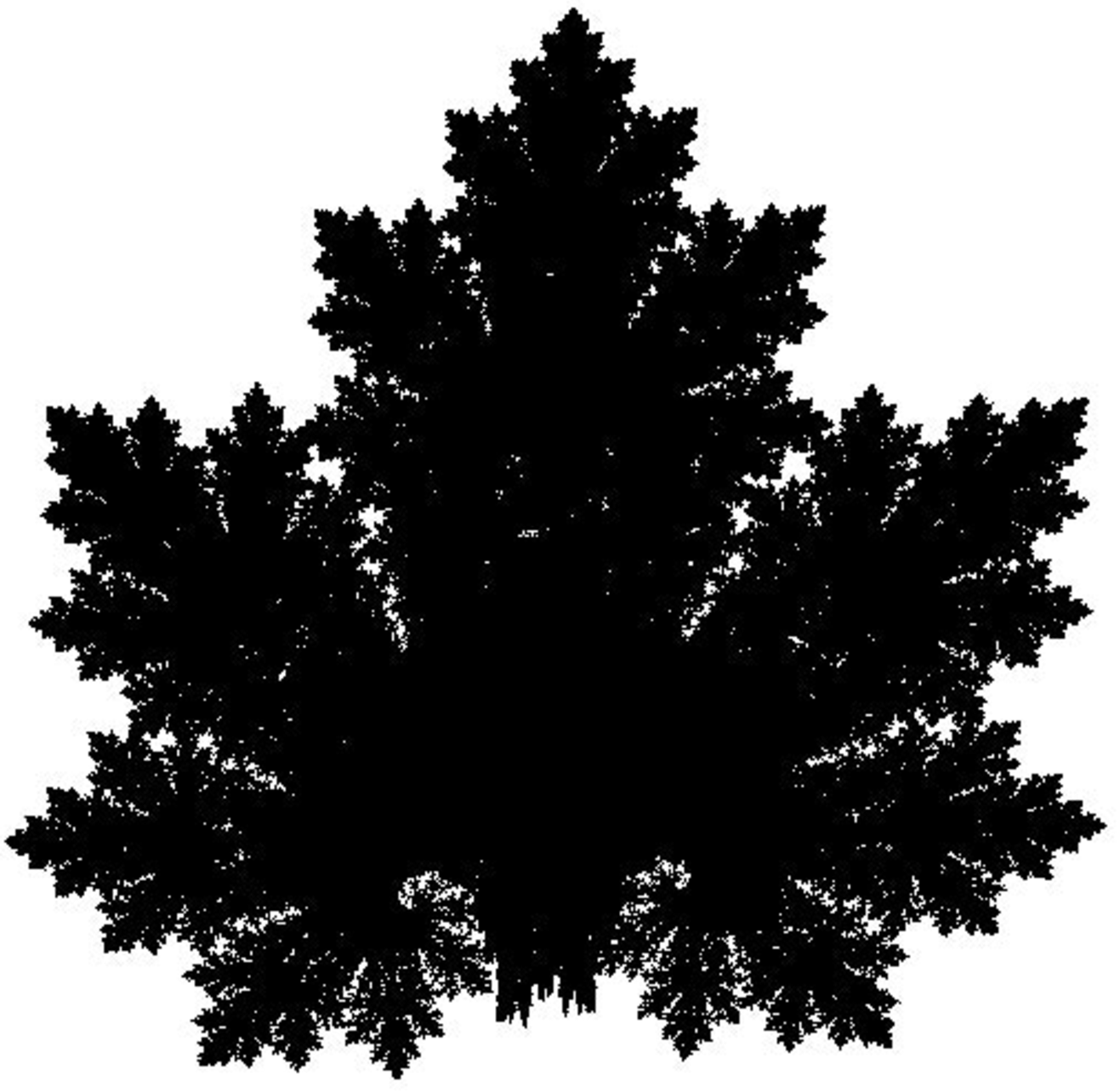}
  \caption{From left to right, the output of the algorithm \texttt{GIFSMeasureDraw}($\mS$) after 5 iterations, $512\times512$ pixels, with the set of probabilities $(p_{1}=0.3, \; p_{2}=0.2, \;p_{3}= 0.05, \;p_{4}=0.45)$ and $(p_{1}=0.05, \; p_{2}=0.2, \;p_{3}=0.3, \;p_{4}=0.45)$ respectively, for the rightmost, algorithm \texttt{IFSDraw($\mS$)} after 12 iterations.}\label{MapleMeasurePic}
  \end{figure}

\begin{figure}[ht]
  \centering
  \includegraphics[width=4cm,frame]{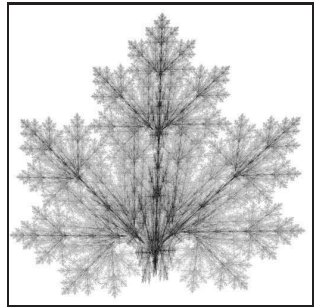}
  \caption{The output of the algorithm \texttt{GIFSMeasureDraw}($\mS$) after 12 iterations, $\ve=0.004$ (512 points in a uniform net) with the set of probabilities $(p_{1}=0.25, \; p_{2}=0.25, \;p_{3}= 0.25, \;p_{4}=0.25)$. In this case the measure $\nu$ is a Hutchinson measure with resolution inferior to $\delta=0.2441701363$.}\label{MapleMeasurePicUnif}
  \end{figure}
\end{example}

 \begin{example}\label{ex:galatolo_final} This example is presented in the final section of \cite{GMN}. Each map consists of a translation followed by rotation and a dilation by a factor smaller than one, thus ensuring that it is a Banach contraction and hence our algorithm is applicable. The approximation of the discrete Hutchinson measure, through \texttt{GIFSMeasureDraw($\mS$)}, is presented in the Figure~\ref{Galatolo_final_measure}. An approximation of the attractor, as a secondary output of the algorithm, is presented in Figure~\ref{Galatolo_final_ex}. Consider $X=[0,1]^2$ and the IFS $\phi_1,..., \phi_4: X \to X$  with probabilities $(p_j)_{j=1}^{L=4}$ where
\[\mS:
\left\{
  \begin{array}{ll}
      \phi_1(x,y)   & = (\cos(\frac{\pi}{6}) (0.4 x -0.24)-\sin(\frac{\pi}{6}) (0.4 y -0.08)+0.6, \\
      & \;\cos(\frac{\pi}{6}) (0.4 y -0.08)+\sin(\frac{\pi}{6}) (0.4 x -0.24)+0.2)\\
      \phi_2(x,y)   & = (\cos(\frac{\pi}{30}) (0.6 x -0.03)+\sin(\frac{\pi}{30}) (0.6 y -0.12)+0.05, \\
      & \; \cos(\frac{\pi}{30}) (0.6 y -0.12)-\sin(\frac{\pi}{30}) (0.6 x -0.03)+0.2)\\
      \phi_3(x,y)   & = (0.5 x -0.475+0.95, \;0.5 y -0.475+0.95)\\
      \phi_4(x,y)   & =(0.45 x -0.045+0.1, \; 0.45 y -0.405+0.9)\\
  \end{array}
\right.
\]
and $(p_{1}=0.18, \; p_{2}=0.22, \;p_{3}= 0.3, \;p_{4}=3)$.\\
As the maps are affine, a simple evaluation shows that $\alpha_\mS=0.6$, where $\mS=(X,(\phi_j)_{j=1}^4,(p_j)_{j=1}^4)$ ($\alpha_\mS$ is the maximum contraction between the four maps $\phi_j$, all being uniform contractions). After 15 iterations of our algorithm, which took only 5.56 seconds for $1024\times 1024$ pixels, we obtain (Figure~\ref{Galatolo_final_measure}, top-right) the associated Hutchinson (invariant) measure with resolution  at most $\delta$, where $\delta \sim \frac{\ve}{1-\alpha_\mS}+\alpha_\mS^N \, D =0.00310635 $, for $N=15$, $\ve=\frac{1}{1024}$ and $D=\sqrt{2}$. Following the technique of \cite{GMN} a similar approximation (Figure~\ref{Galatolo_final_measure}, bottom) is given with resolution of at most $0.0047583$, to compare. Most specifically, we have the following benchmark data given in the below table:\\
\begin{center}
\begin{tabular}{|c|c|c|c|}
  \hline
  Pixels $M\times M$ & Iter. Number $N$ & Time(in seconds)& Resol. at most $\delta \sim \frac{\ve}{1-\alpha_\mS}+\alpha_\mS^N \, D$  \\
  \hline
  $256 \times 256 $ & 15 & 0.56 &   0.010430566982011563\\
  $512 \times 512 $ & 15 & 1.76 &   0.005547754482011564\\
  $1024 \times 1024 $ & 15 & 5.56 & 0.003106348232011563\\
  \hline
\end{tabular}
\end{center}
 \begin{figure}[ht]
  \centering
  \includegraphics[width=4cm,,height=4cm,frame]{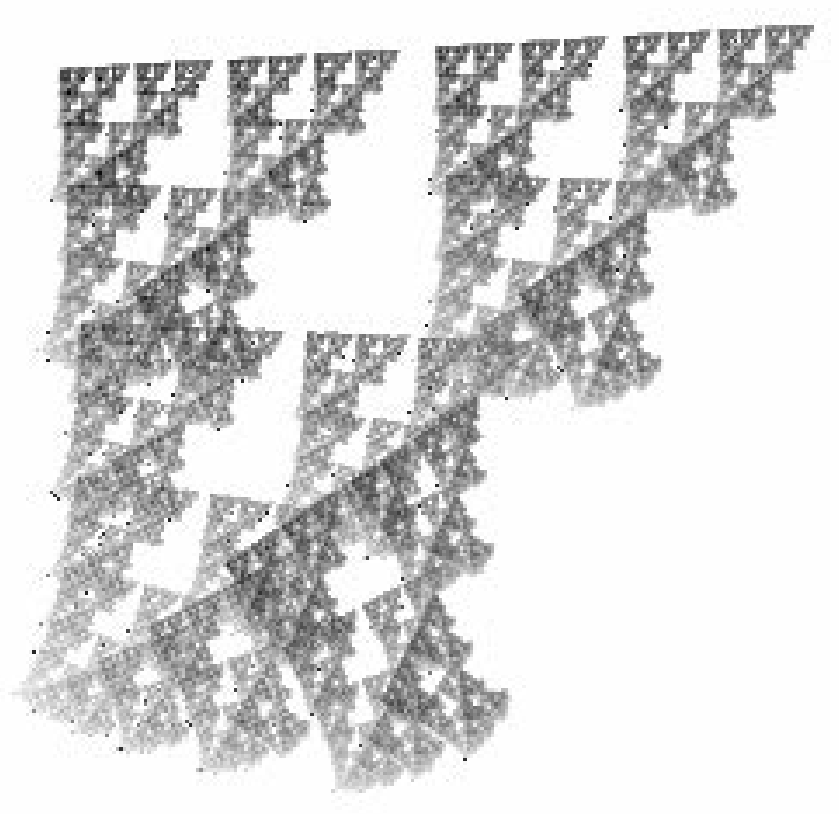}\;
  \includegraphics[width=4cm,,height=4cm,frame]{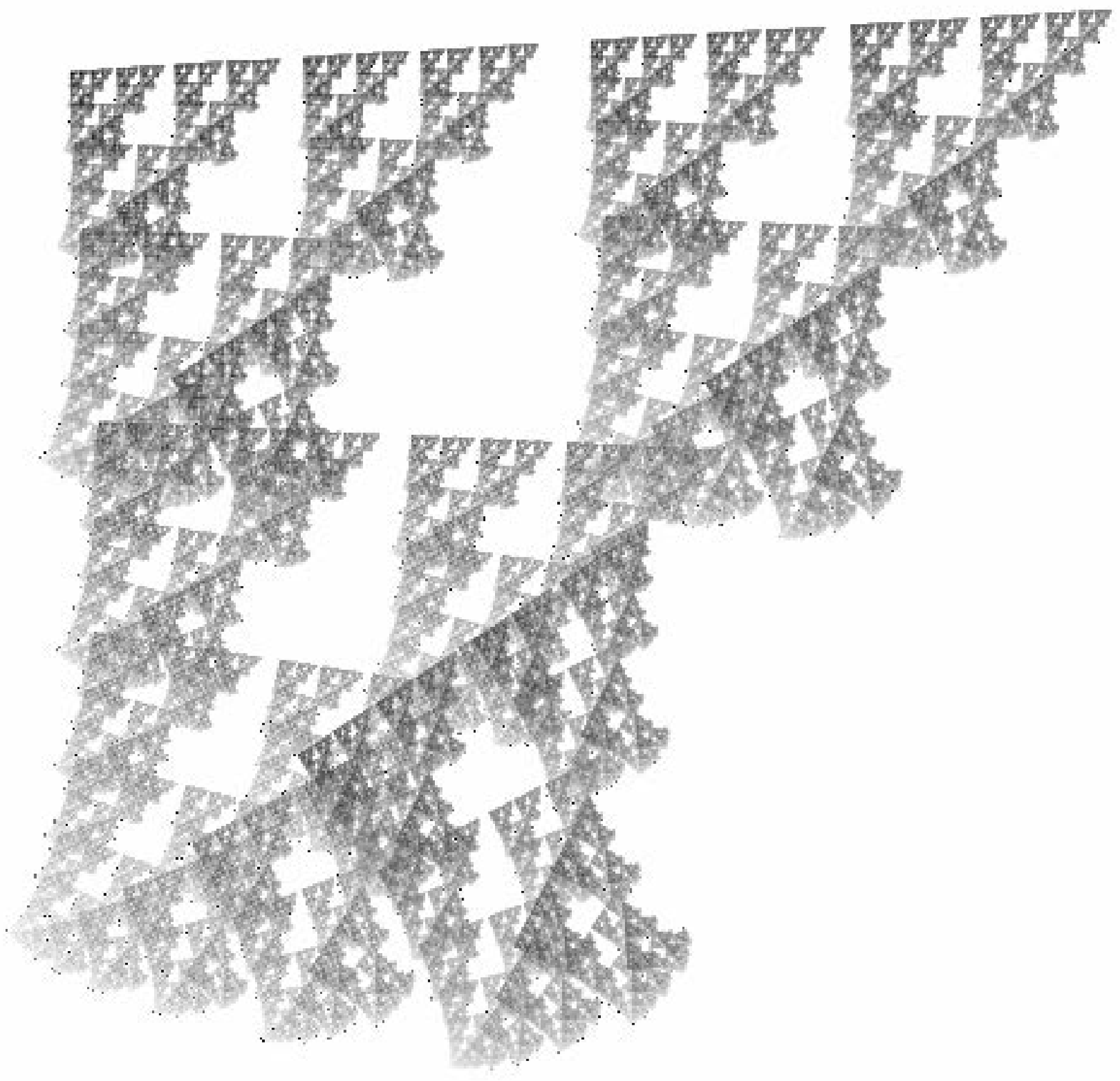}\;
  \includegraphics[width=4cm,,height=4cm,frame]{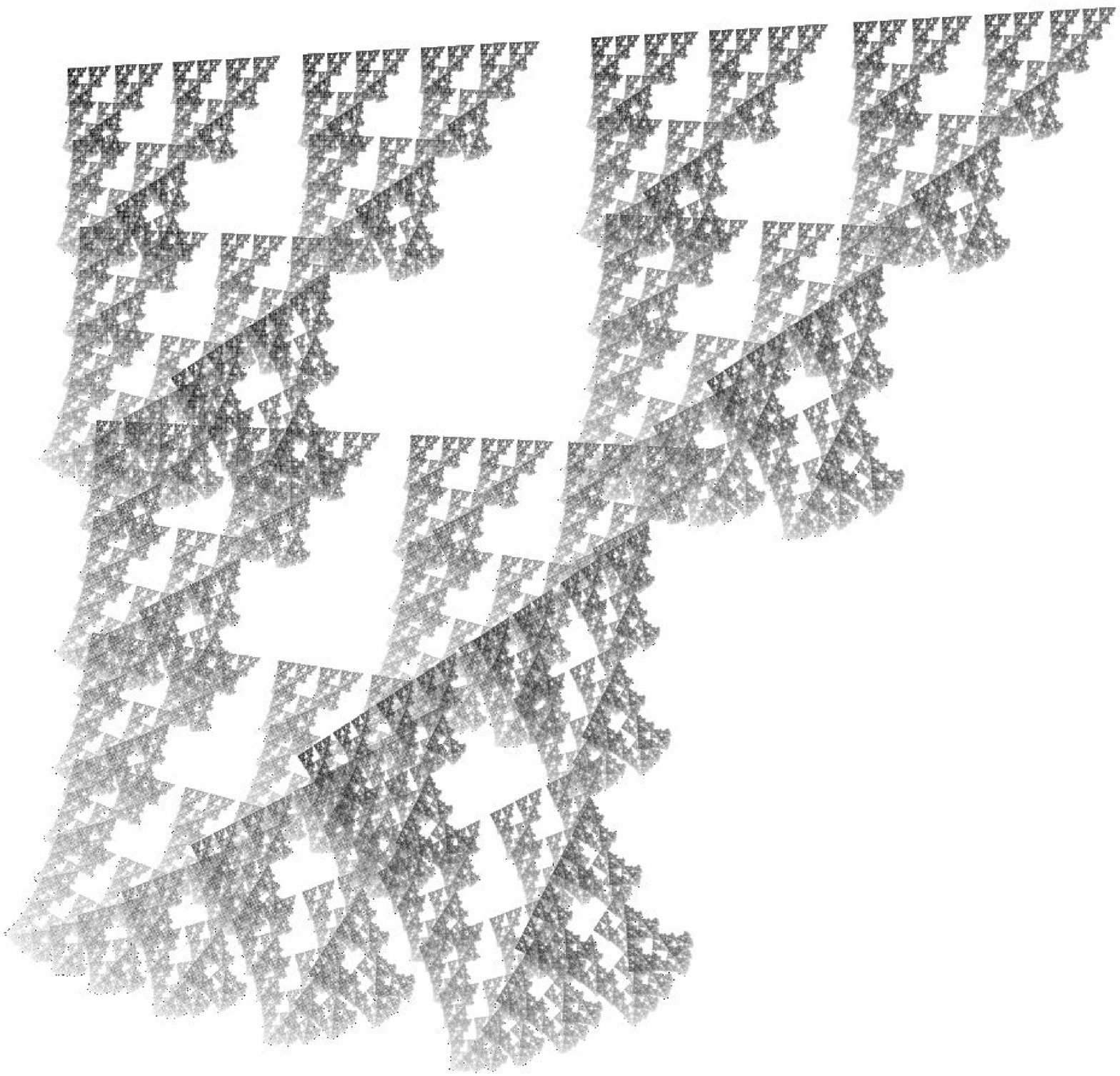}\\
  \includegraphics[width=4cm,,height=4cm,frame]{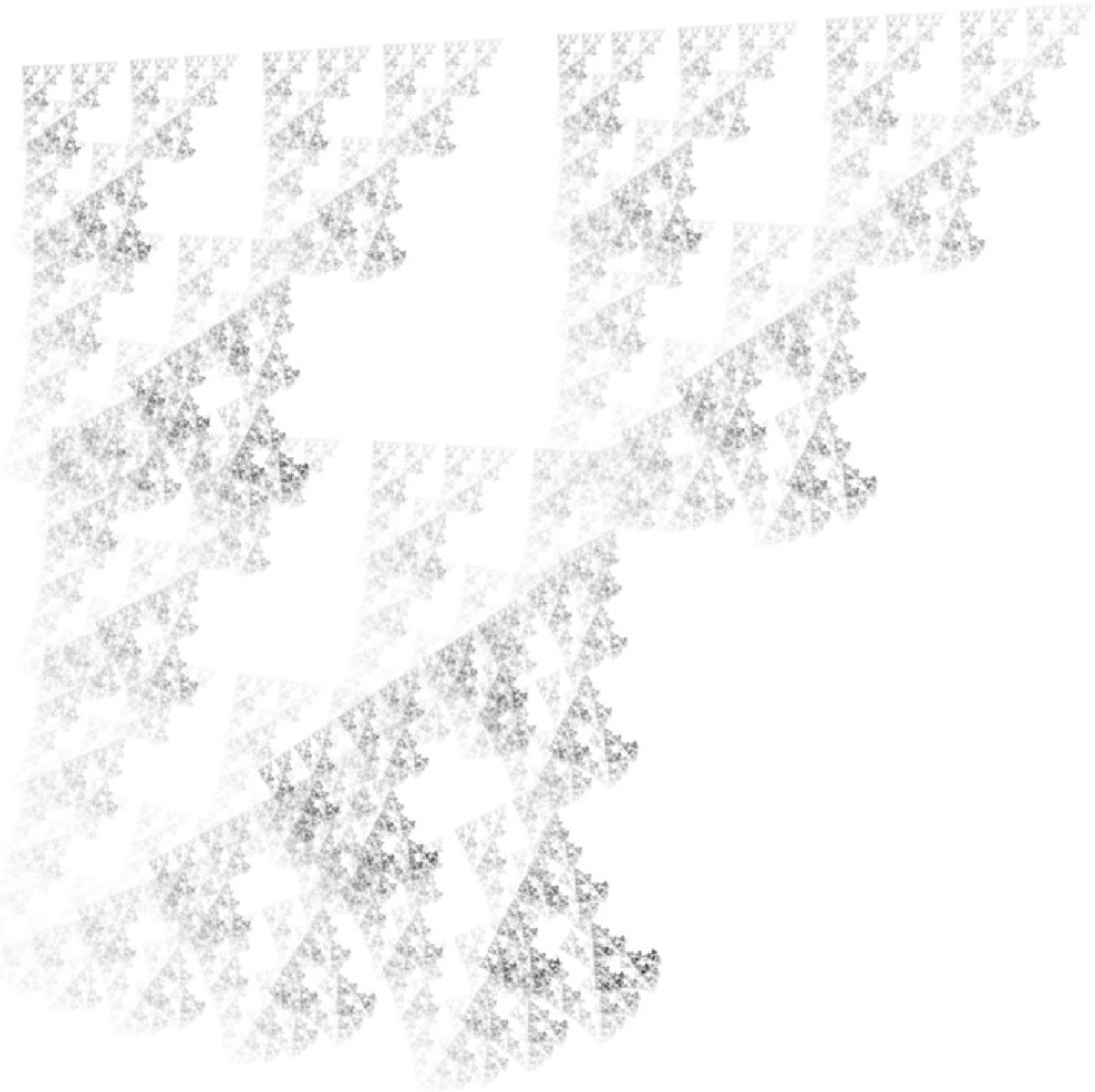}
  \caption{ From left to right, the output of the algorithm \texttt{GIFSMeasureDraw}($\mS$) after 15 iterations, for $256\times 256$, $512\times 512$ and $1024\times 1024$ pixels, with the prescribed set of probabilities; below, the picture given in \cite{GMN}.}\label{Galatolo_final_measure}
  \end{figure}
\begin{figure}[ht]
  \centering
  \includegraphics[width=4cm,frame]{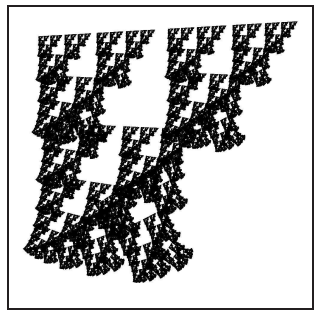}
  \caption{ The attractor $A_\mS$, as the secondary output of the algorithm \texttt{GIFSMeasureDraw}($\mS$), for the same configurations as in  Figure~\ref{Galatolo_final_measure}. }\label{Galatolo_final_ex}
  \end{figure}
\end{example}

\subsection{GIFS examples}
\begin{example}\label{example filip measures} The approximation of the attractor by the algorithm \texttt{GIFSDraw($\mS$)} from \cite{DOS} is presented in Figure~\ref{FilipMeasurePic} and the approximation of the discrete Hutchinson  measure, through \texttt{GIFSMeasureDraw($\mS$)}, is presented in the same figure.  Consider $X=[0,1]^2$ and the GIFS $\phi_1,..., \phi_3: X^2 \to X$  with probabilities $(p_j)_{j=1}^{L=3}$ where
\[\mS:
\left\{
  \begin{array}{ll}
       \phi_1(x_1,y_1,x_2,y_2)&=(0.25 x_1+0.2 y_2, 0.25 y_1+0.2 y_2) \\
       \phi_2(x_1,y_1,x_2,y_2)&=(0.25 x_1+0.2 x_2, 0.25 y_1+0.1 y_2+0.5)\\
       \phi_3(x_1,y_1,x_2,y_2)&=(0.25 x_1+0.1 x_2+0.5, 0.25 y_1+0.2 y_2))\\
  \end{array}
\right.
\]
  Once again, we can see that when the initial probabilities are small on the index of a map responsible for a part of the fractal attractor, the Hutchinson measure is very little concentrated on that part.
 \begin{figure}[ht]
  \centering
  \includegraphics[width=4cm,frame]{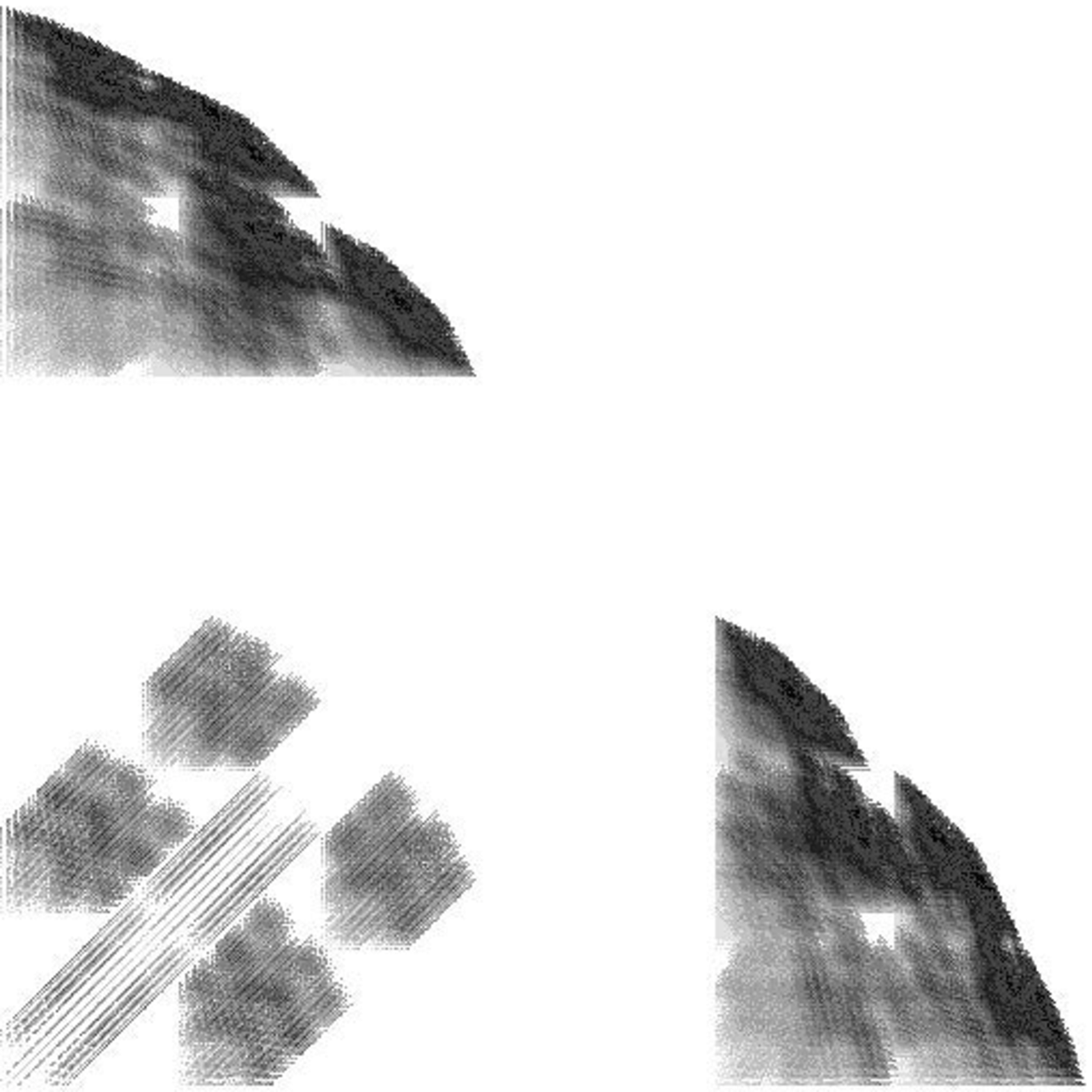}\;
  \includegraphics[width=4cm,frame]{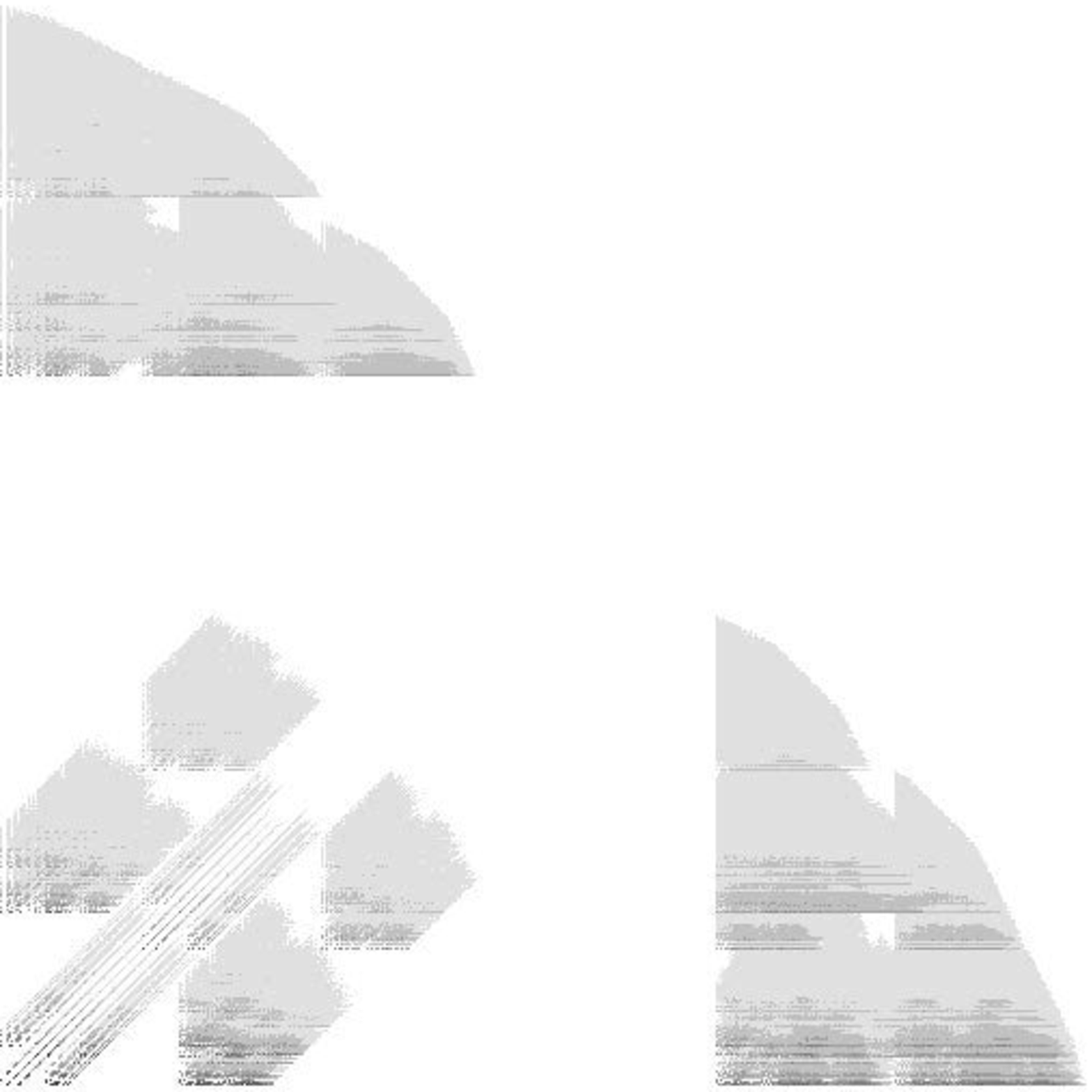}\;
  \includegraphics[width=4cm,frame]{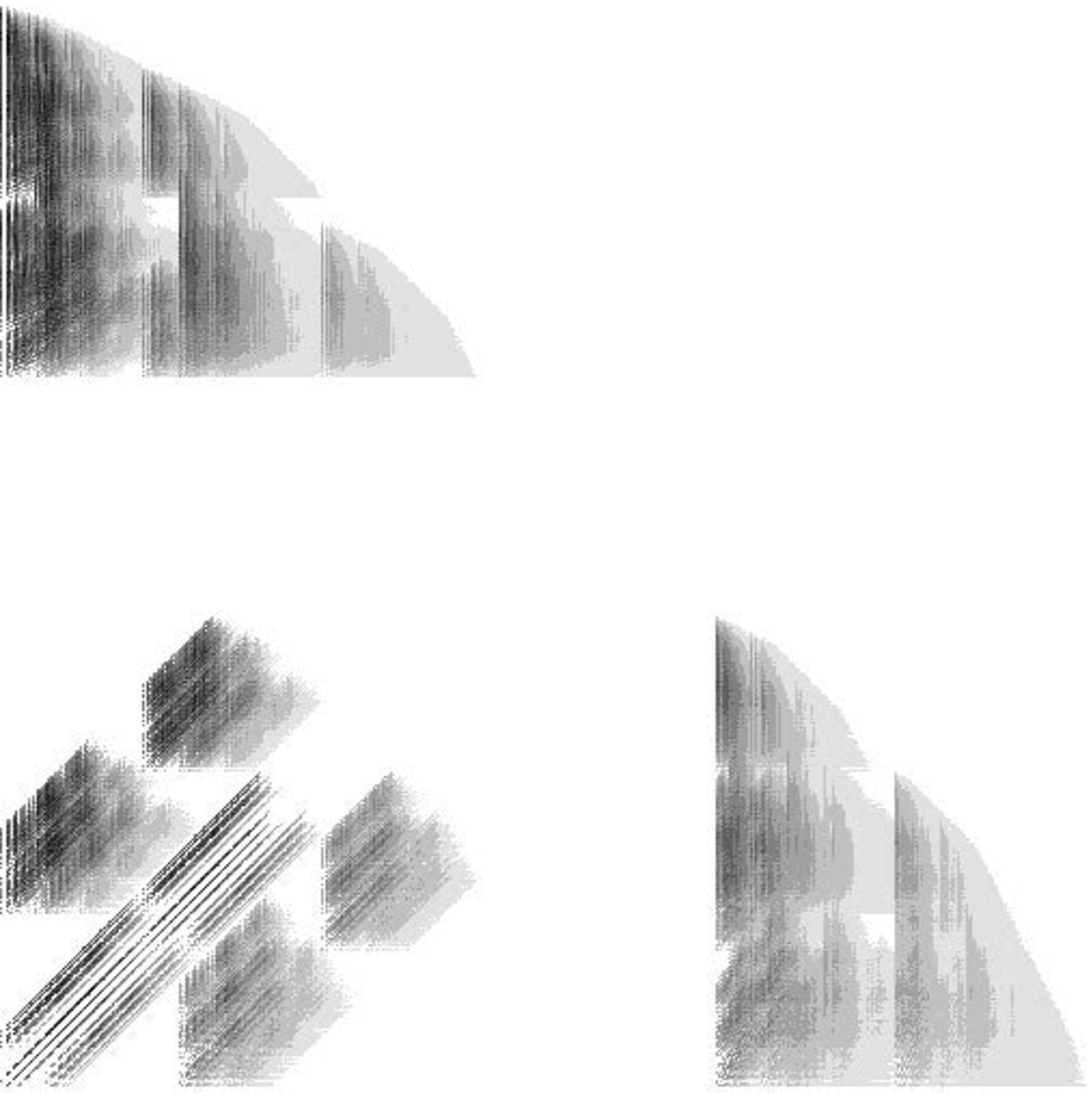}\\
  \includegraphics[width=4cm,frame]{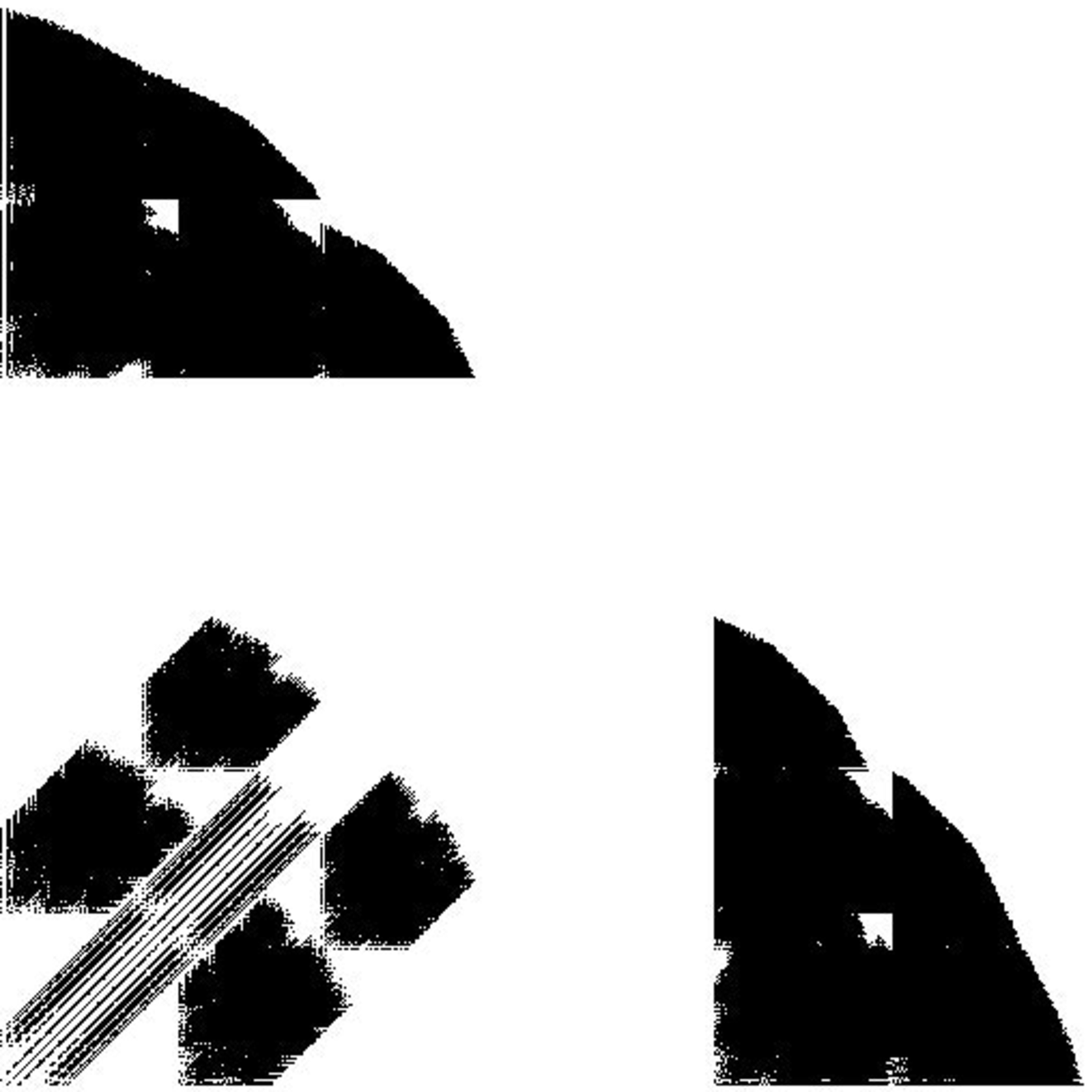}
  \caption{On the top, from the left to the right the output of the algorithm \texttt{GIFSMeasureDraw($\mS$)} after 12 iterations, with the set of probabilities $(p_{1}=0.10, \; p_{2}=0.45, \;p_{3}= 0.45)$, $(p_{1}=0.45, \; p_{2}=0.10, \;p_{3}= 0.45)$ and $(p_{1}=0.45, \; p_{2}=0.45, \;p_{3}= 0.10)$ respectively. On the bottom, algorithm \texttt{GIFSDraw($\mS$)} after 12 iterations. In all the cases we have , $512\times512$ pixels.}\label{FilipMeasurePic}
  \end{figure}
\end{example}

\section{Further applications} \label{sec:Further applications}

\subsection{Approximating integrals with respect to stationary
Probability measures}\label{sec:approx_integrals}

In a recent preprint \cite{CJ} the authors describe a method to approximate integral of functions with respect to stationary probability measures (measures that are fixed points for the Markov operator associated to an IFS with probabilities, $M_{\mathcal{S}}\left(\mu_{\mathcal{S}}\right)=\mu_{\mathcal{S}}$) which are the Hutchinson measures for those IFSs. The setting is the interval $[0,1]$ and the IFS is required to fulfill some additional regularity properties such as holomorphic extension, control of derivatives of the maps in the IFS and on the set of functions that one may integrate.

Since our algorithm \texttt{GIFSMeasureDraw}($\mS$) for dimension $1$ provides a discrete $\delta$-approximation of such measures $\mu_{\mS}$ in the form
$$
  \nu:=\sum_{k=1}^{n}   \nu_{i_{k}} \delta_{x[i_{k}]}
$$
where the points $x[i_{k}]$ are in the correspondent $\delta$-approximation of the actual attractor, we can approximate the integral of a Lipschitz function $g:[0,1] \to \mathbb{R}$ by
$$\int_{[0,1]} g d\mu_{\mS} \simeq \sum_{k=1}^{n}   \nu_{i_{k}} g(x[i_{k}])$$
with precision $\delta$.

As a demonstration of this we next describe the measures and the integrals for three examples found in \cite{CJ} using our algorithms.

\begin{example}\label{ex:Italo:moment}
 In this first example we consider the Hausdorff moment of the Hutchinson measure $\mu_{\mS}$ which is given by
 $$
\gamma_{n}=\gamma_{n}(\mu_{\mS}):=\int_{-\infty}^{\infty} x^{n} \mathrm{d} \mu_{\mS}(x), n=0,1, \ldots
$$
\[\mS:
\left\{
  \begin{array}{ll}
       \phi_1(x)&= \frac{1}{3}  x \\
       \phi_2(x)&= \frac{1}{3}  x+\frac{2}{3}\\
  \end{array}
\right.
\]
with probabilities $p_1:=1/3$ and $p_2:=2/3$.

For purpose of comparison we use the algorithm \texttt{GIFSMeasureDraw}($\mS$) to approximate $\mu_{\mS}$ (see Figure~\ref{measure_moment}) and then compute $\gamma_{n}, n=0,1, \ldots, 10$:
\begin{figure}[ht]
  \centering
  \includegraphics[width=3cm,frame]{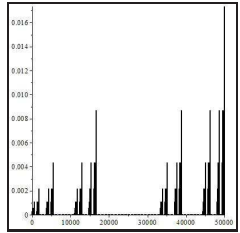}
  \caption{Histogram of $\mu_{\mS}$ produced by algorithm \texttt{GIFSMeasureDraw}($\mS$) with resolution $\delta=0.0001$ after 10 iterations, having a high definition $50,000$ pixels taking 1.9 seconds.}\label{measure_moment}
\end{figure}
The first $10$ moments are displayed in Table~\ref{tab:moment}.

\begin{table}
  \centering
   \begin{tabular}{|c|c|c|}
     \hline
     $n$ & Approx.  $\sum_{k=1}^{m}   \nu_{i_{k}} (x[i_{k}])^n$ & Actual value  $\int_{[0,1]} x^n d\mu_{\mS}$, from \cite{CJ}. \\
     \hline
     0 & 1.0000000 & 1 \\
     1 & 0.6666608 & 2/3=0.66666666666... \\
     2 & 0.5555478 & 5/9=0.55555555555... \\
     3 & 0.4957168 & 58/117=0.49572649572... \\
     4 & 0.4552590 &  799/1755=0.45527065527... \\
     5 & 0.4246685 &  \vdots \\
     6 & 0.4002248 &  \\
     7 & 0.3800868 &  \vdots\\
     8 & 0.3631560 &  1213077397297/3340208879865=0.36317411303...\\
     9 & 0.3486940 & 764170684622650/2191399705783431=0.34871351064...\\
    10 & 0.3361713 & 16313445679660723325/48524163685162512633=0.33619220694...\\
     \hline
   \end{tabular}
  \caption{Moments using the $\delta$-approximation of $\mu_{\mS}$ by \texttt{GIFSMeasureDraw}($\mS$)}\label{tab:moment}
\end{table}

\end{example}

\begin{example}\label{ex:Italo:wass}
In this second example we consider the Wasserstein distance between two  Hutchinson measures $\mu_{\mS_1}$ and $\mu_{\mS_2}$ (see Figure~\ref{measure_wass}) associated to different probabilities for the same IFS
\[\mS:
\left\{
  \begin{array}{ll}
       \phi_1(x)&= \frac{\sin (\pi x / 4)}{6}+\frac{1}{4} \\
       \phi_2(x)&= \frac{\sin (\pi x / 4)}{3}+\frac{2}{3}\\
  \end{array}
\right.
\]
with probabilities $p_1:=1/7, p_2:=6/7$ and $q_1:=1/2, q_2:=1/2$ respectively.

It is easy to see that the IFS is $\frac{\pi}{12}$-Lipschitz. Then our algorithm \texttt{GIFSMeasureDraw}($\mS$) can be used to approximate the integrals and, in particular, under the hypothesis of \cite{CJ} the Wasserstein  distance is given by
$$
W_{1}\left(\mu_{\mS_1}, \mu_{\mS_2}\right)=\left|\int x \mathrm{d} \mu_{\mS_{2}} - \int x \mathrm{d} \mu_{\mS_1}\right| \simeq 0.22104594557263850324...
$$
\begin{figure}[ht]
  \centering
  \includegraphics[width=3cm,frame]{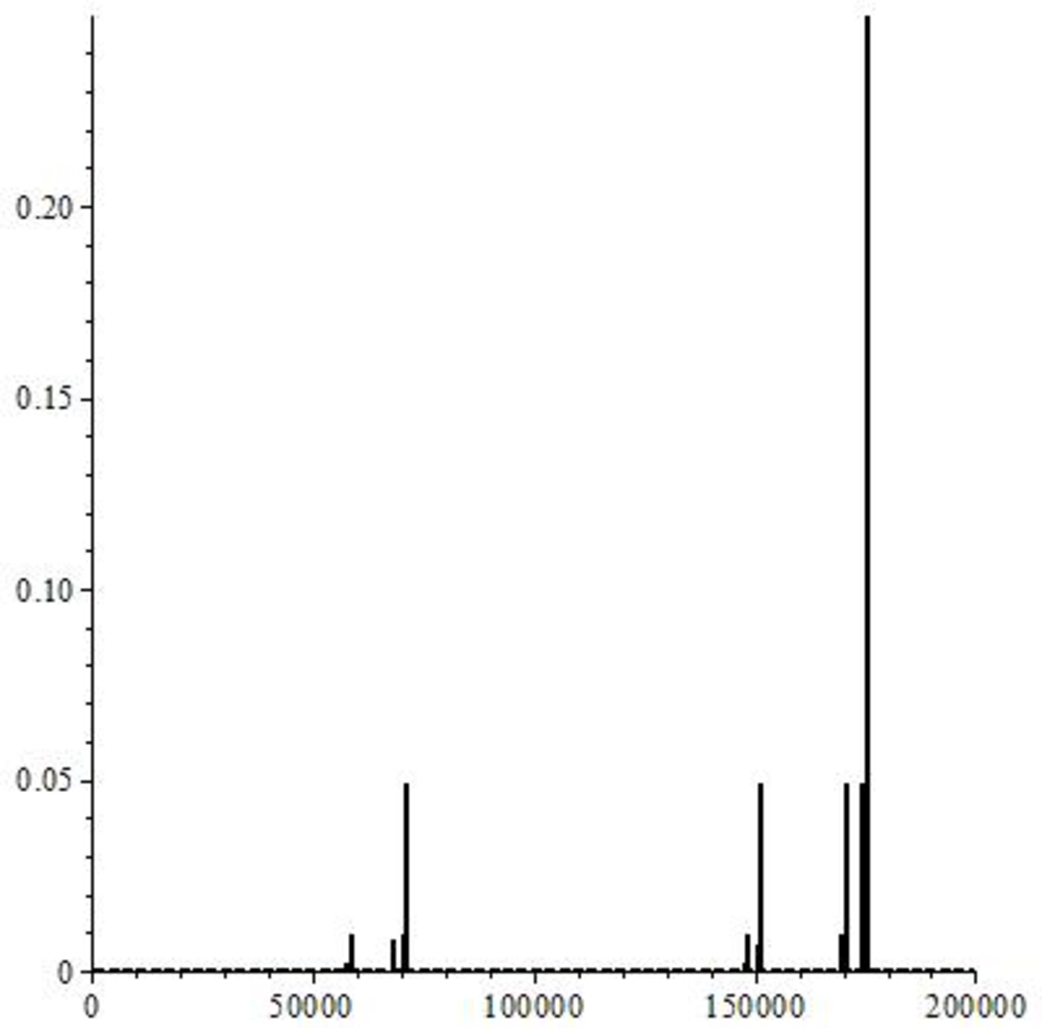}
  \includegraphics[width=3cm,frame]{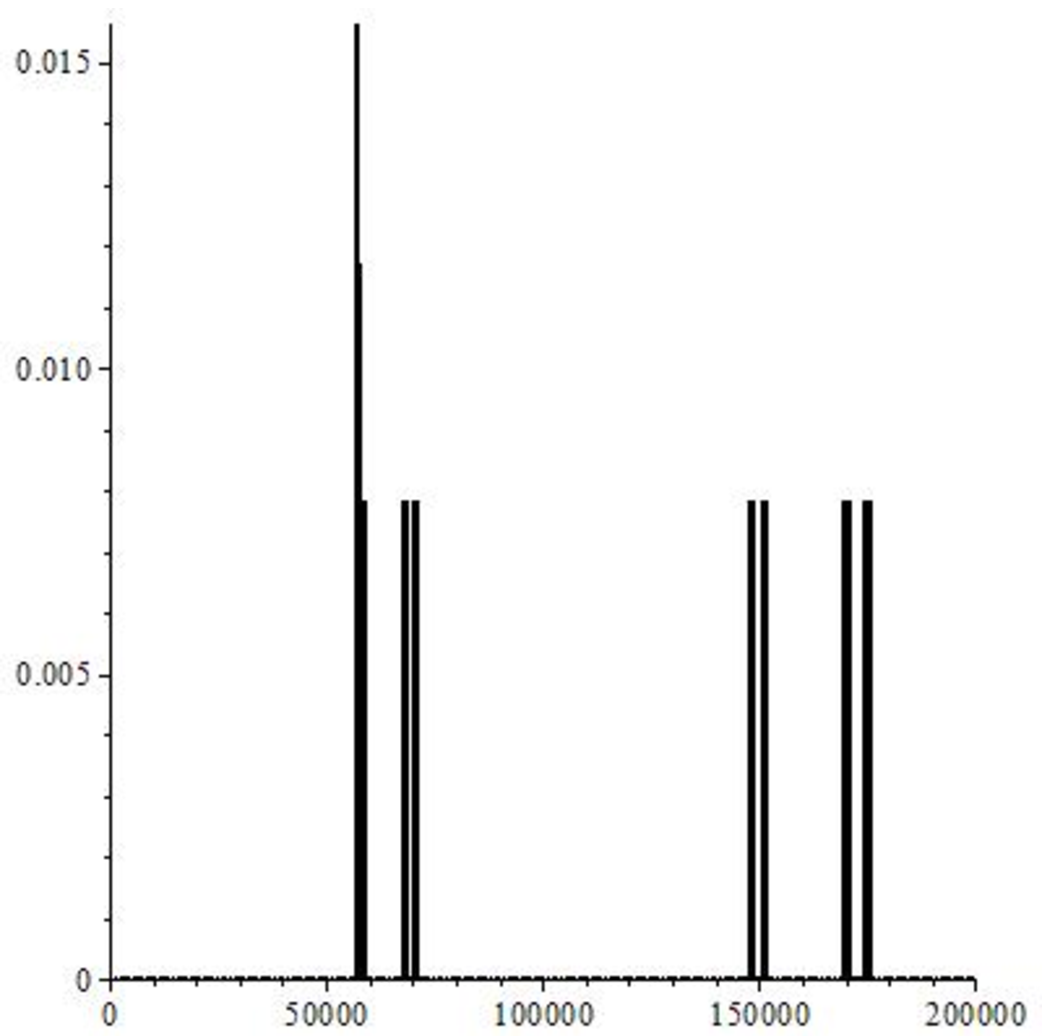}
  \caption{From left to right: the histograms of $\mu_{\mS_1}$ and $\mu_{\mS_2}$ produced by algorithm \texttt{GIFSMeasureDraw}($\mS$) with resolution $\delta=0.00003$ after 10 iterations, having a high definition $200,000$ pixels taking 5.8 seconds.}\label{measure_wass}
\end{figure}

\end{example}

\begin{example}\label{ex:Italo:lyapun}
For the last, we consider the problem of compute the Lyapunov exponent of the Hutchinson measure $\mu_{\mS}$ (see Figure~\ref{measure_lyap}) of a IFSp
given by
$$
\chi_{\mu_{\mS}} :=-\int \sum_{i=1}^{2} p_{i} \log \left|\phi_{i}^{\prime}(x)\right| \mathrm{d} \mu_{\mS}(x) \simeq 1.7367208099326368...
$$
for
\[\mS:
\left\{
  \begin{array}{ll}
       \phi_1(x)&= \frac{\sin (\pi x / 4)}{6}+\frac{1}{4}\\
       \phi_2(x)&= \frac{\sin (\pi x / 4)}{3}+\frac{2}{3}\\
  \end{array}
\right.
\]
with probabilities $p_1:=1/3, p_2:=2/3$.
\begin{figure}[ht]
  \centering
  \includegraphics[width=3cm,height=3cm,frame]{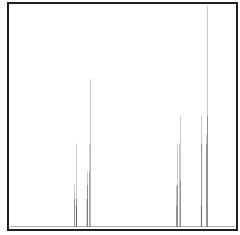}
  \caption{The histogram of $\mu_{\mS}$ produced by algorithm \texttt{GIFSMeasureDraw}($\mS$) with resolution $\delta=2.11\times10^{-6}$ after 20 iterations, having a high definition $3,200,000$ pixels taking 0.5690 seconds.}\label{measure_lyap}
\end{figure}
\end{example}

\subsection{Projected Hutchinson measures}
One can easily adapt algorithm \texttt{GIFSMeasureDraw}($\mS$) to also compute the integral of a function with respect to the Hutchinson measure and compare this result against the typical averages, as predicted by Elton's ergodic theorem, see \cite{Elt}.

For example, given the GIFS $\mS$ from \cite[Example 11]{EO},
\[\mS:
\left\{
  \begin{array}{ll}
       \phi_1(x,y)&= \frac{1}{3}  x+\frac{1}{4}   y \\
       \phi_2(x,y)&= \frac{1}{3}  x-\frac{1}{4}   y+ \frac{1}{2}\\
  \end{array}
\right.
\]
we consider its extension
\[\mR:
\left\{
  \begin{array}{ll}
       \psi_1(x,y)&=(y, \frac{1}{3}  x+\frac{1}{4}   y) \\
       \psi_2(x,y)&=(y, \frac{1}{3}  x-\frac{1}{4}   y+ \frac{1}{2})\\
  \end{array}
\right.
\]
which is an eventually contractive IFS on $[0,1]^2$ (the second power of $\mR$ is $0.5435541904$-Lipschtz), so our theory works. In both cases we consider probabilities $p_1:=0.65$ and $p_2:=0.35$. As pointed out in \cite{EO}, if $\nu \in \mP([0,1])$ is the Hutchinson measure for the GIFSp  $\mS=(X,(\phi_j)_{j=1}^2,(p_j)_{j=1}^2)$ and $\mu_{\mR} \in \mP([0,1]^2)$ is the Hutchinson measure for the IFSp $\mR=(X,(\psi_j)_{j=1}^2,(p_j)_{j=1}^2)$ (see Figure~\ref{ElismAttracIFSPic}) then $\nu \neq \nu'$, where $\nu'$ is the projected Hutchinson measure $\Pi_1^{\sharp} (\mu_{\mR})$.

Using our algorithms we are capable to display a histogram representation of such distributions on $[0,1]$. In each case the height of each vertical bar represents the approximate measure of a cell in the $\ve$-net with $350$ points in $X=[0,1]$, and resolution $2\times 10^{-2}$ (see Figure~\ref{ElismAttracGIFSPic}).
\begin{figure}[ht]
  \centering
  \includegraphics[width=3cm,frame]{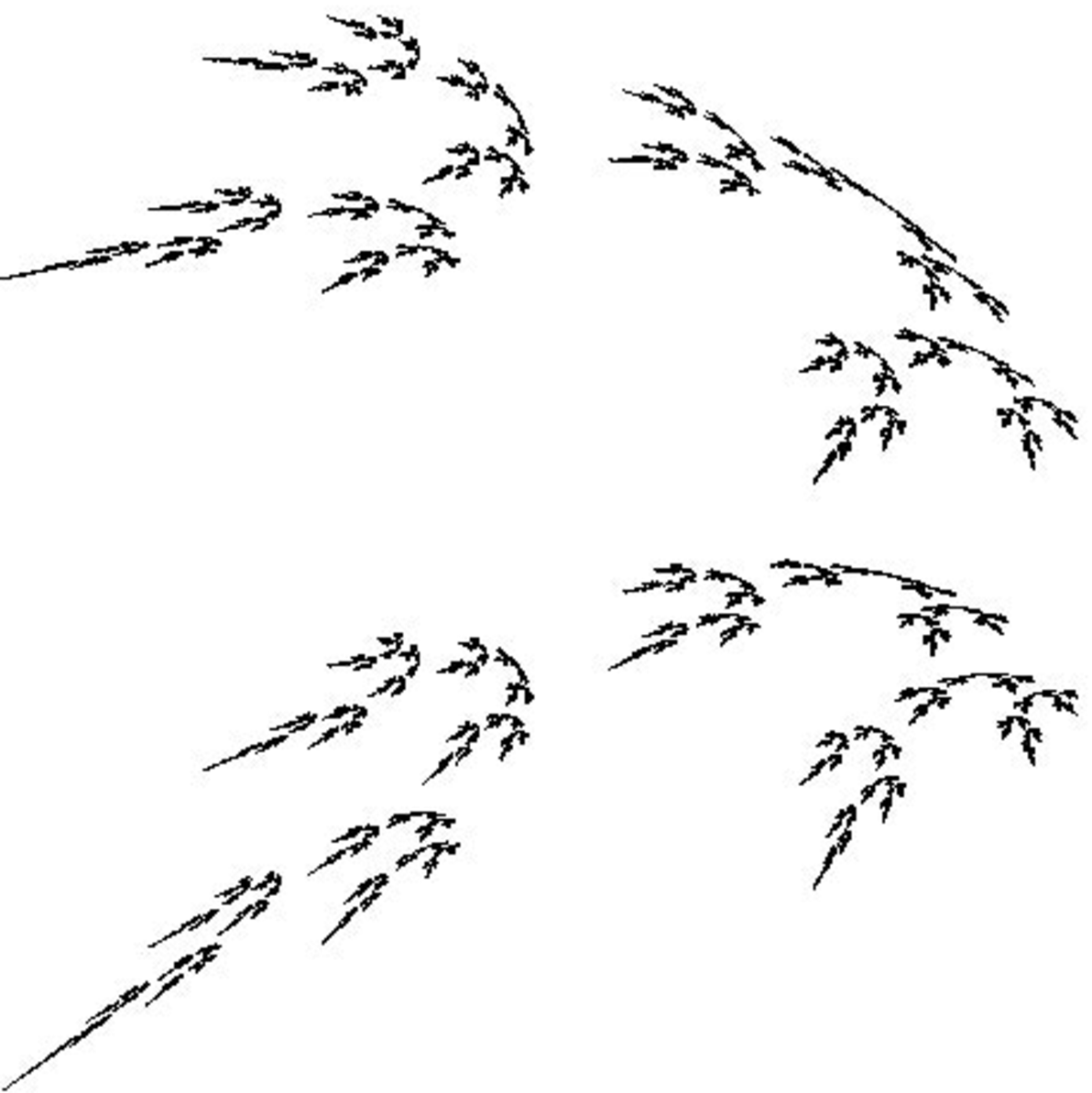}\;
  \includegraphics[width=3cm,frame]{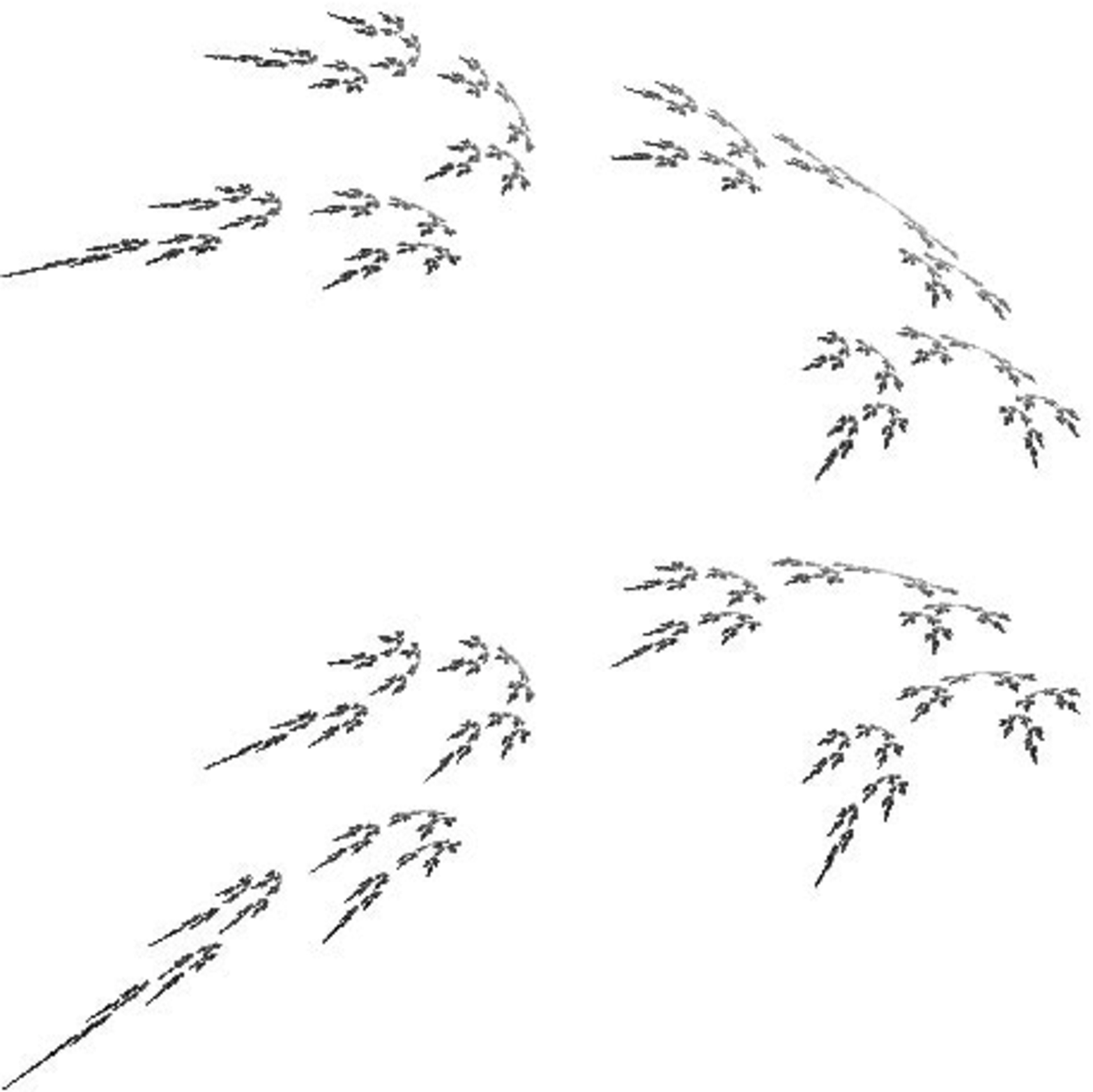}
  \caption{The output of algorithm \texttt{GIFSMeasureDraw}($\mR$) after 25 iterations, having a fairly high definition, $512\times512$ pixels. On the left the attractor $A_{\mR}$ on the right the histogram of $\mu_{\mR}$.}\label{ElismAttracIFSPic}
\end{figure}

\begin{figure}[ht]
  \centering
  \includegraphics[width=3cm,frame]{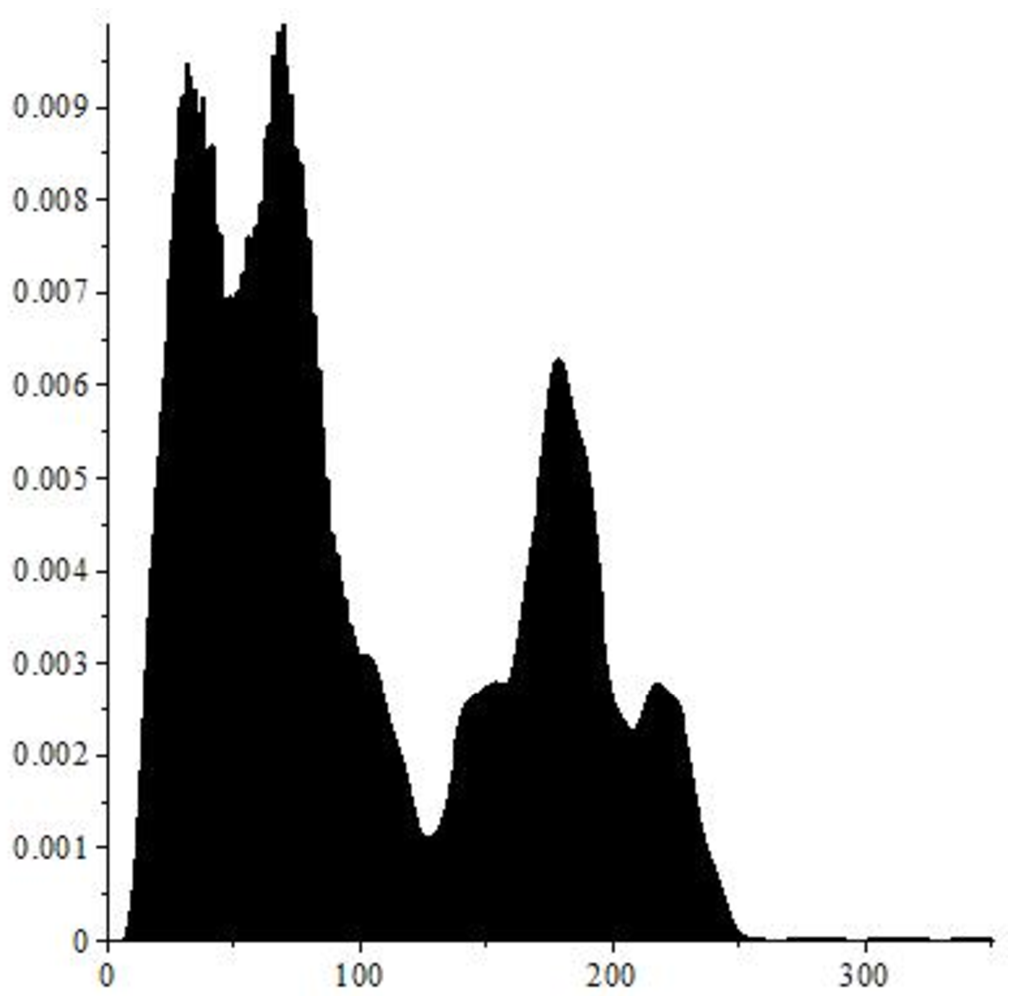}\;
  \includegraphics[width=3cm,frame]{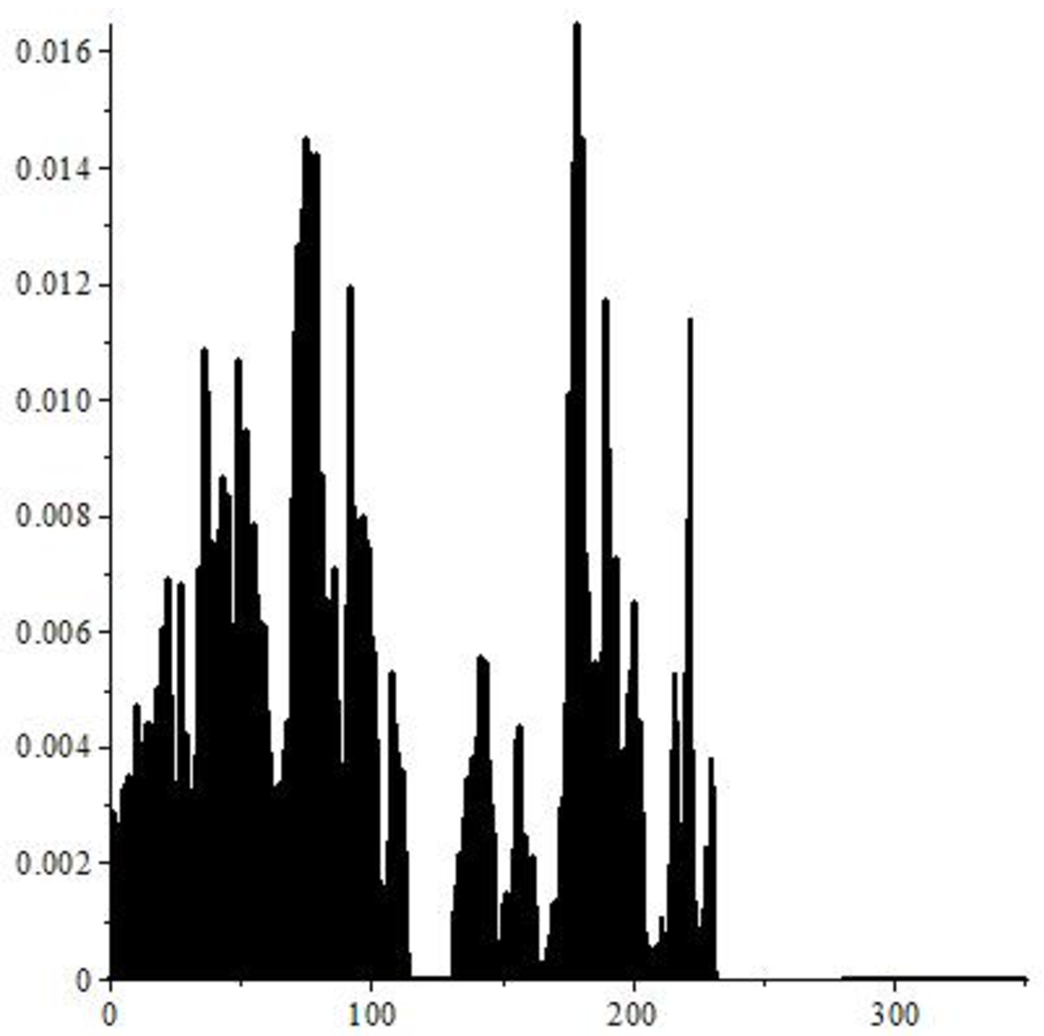}
  \caption{The output of algorithm \texttt{GIFSMeasureDraw}($\mS$) after 25 iterations, having a fairly high definition $350\times 350$ pixels. On the left the attractor $\nu$ on the right the histogram of $\nu'=\Pi_1^{\sharp} (\mu_{\mR})$ which is the projection of the Figure~\ref{ElismAttracIFSPic}.}\label{ElismAttracGIFSPic}
\end{figure}

The ergodic theorem for projected Hutchinson measures from \cite{EO} claims that
$$
\lim _{N \rightarrow+\infty} \frac{1}{N} \sum_{n=0}^{N-1} f\left(x_{n}(a)\right)=\int_{X} f(t) d \nu'(t)
 $$
where $x_{n}(a)$ is the projection on the first coordinate and $a$ is a sequence of symbols chosen with probability one in the sequences of $\{1,2\}^{\mathbb{N}} $ according to the probabilities $p_1:=0.65$ and $p_2:=0.35$.

Consider $f(x)=x^2$: using the measure obtained by \texttt{GIFSMeasureDraw}($\mR$) for estimate $\int_{0}^{1} x^2 d\nu'(x)$, the projected measure, we obtain
$$\int_{0}^{1} x^2 d\nu'(x) \, \simeq 0.12177521930...$$

On the other hand, using the ergodic theorem where each $a_i$ in the sequence $a=(a_0,a_1,a_2, ...)$ is picked from a random i.i.d.\~ variable $I \in\{1,2\}$ with distribution $p_1:=0.65$ and $p_2:=0.35$, $x_0=0.5$ and $N=10.000$ we get
$$
 \frac{1}{10000} \sum_{n=0}^{10000-1} \left(x_{n}(a)\right)^2 \,\simeq 0.1228183842857\ldots
 $$
with an absolute error of $5\times 10^{-2}$. For a fractal computed on a resolution of $500\times500$ pixels the error is $3\times 10^{-4}$.

\subsection{IFS and GIFS with place dependent probabilities}
We consider variable probabilities, that is, each $p_j$ is as function of $x$, such as in \cite{Hut}, \cite[Theorem 2.1]{BDEG}, assuming average-contractiveness, \cite{Ste}, \cite{Ob1} and more recently \cite{GMM} for IFS and in \cite[Section 3]{Mi} for GIFS.

We notice that Lemma~\ref{net prob} is still valid under the variable probability hypothesis. Thus we just need to ensure that the respective Markov operator is Banach contractive to use Theorem~\ref{dfp}. We are not going to remake several straightforward computations to update the algorithms. We only update the necessary computation of the probabilities in each case (both in the bidimensional version):
\begin{itemize}
  \item In the algorithm \texttt{GIFSMeasureDraw}($\mS$) we replace $$\nu_{(x_1[i^1],..., x_d[i^d])}:=\nu_{(x_1[i^1],..., x_d[i^d])} +  p_{j} \nu_{y_0}\cdot...\cdot\nu_{y_{m-1}}$$ by
$$\nu_{(x_1[i^1],..., x_d[i^d])}:=\nu_{(x_1[i^1],..., x_d[i^d])} +  p_{j}(y_0,..,y_{m-1}) \nu_{y_0}\cdot...\cdot\nu_{y_{m-1}}$$
\end{itemize}

\begin{example}\label{ExMiculescu}
   In \cite[p. 146]{M1}, the author considers the one dimensional case $X=[0,1]$ and a GIFS $\mS=(X,(\phi_j)_{j=1}^2)$ where
   \[
   \left\{
     \begin{array}{ll}
       \phi_1(x,y)= &  \frac{x}{4}+\frac{y}{4} \\
       \phi_2(x,y)= &  \frac{x}{4}+\frac{y}{4} +\frac{1}{2}.
     \end{array}
   \right.
   \]
Given a function
$$
\alpha(t)=\left\{\begin{array}{ll}{1,} & {\text { if } t \in\left[0, \frac{1}{4}\right]} \\ {2-4 t,} & {\text { if } t \in\left[\frac{1}{4}, \frac{1}{2}\right]} \\ {0,} & {\text { if } t \in\left[\frac{1}{2}, 1\right]}\end{array}\right.
 $$
he considers the probabilities $p_{1}(x, y)= \frac{1}{33} \alpha(x) \alpha(y)$,
$$
p_{2}(x, y)=1-p_{1}(x, y)=1-\frac{1}{33} \alpha(x) \alpha(y),
 $$
and the GIFSpdp $\mS=(X,(\phi_j)_{j=1}^2, (p_j)_{j=1}^2)$.

Under this hypothesis, he verifies that $M_{\mathcal{S}}$ is a contraction and $\mu_{\mathcal{S}}$ is its only fixed point. More than that, he verifies that the attractor is $A_{\mathcal{S}}=[0,1] $ and $\supp \mu_{\mathcal{S}} \subseteq \left[0, \frac{1}{4}\right] \cup\left[\frac{1}{2}, 1\right]$.

From the previous discussion it is easy to see that our algorithm \texttt{GIFSMeasureDraw}($\mS$) can be used to get approximations of $\mu_{\mathcal{S}}$ as we can see in figure~\ref{Ex_Miculescu_Histo}

\begin{figure}[ht]
  \centering
  \includegraphics[width=3cm,frame]{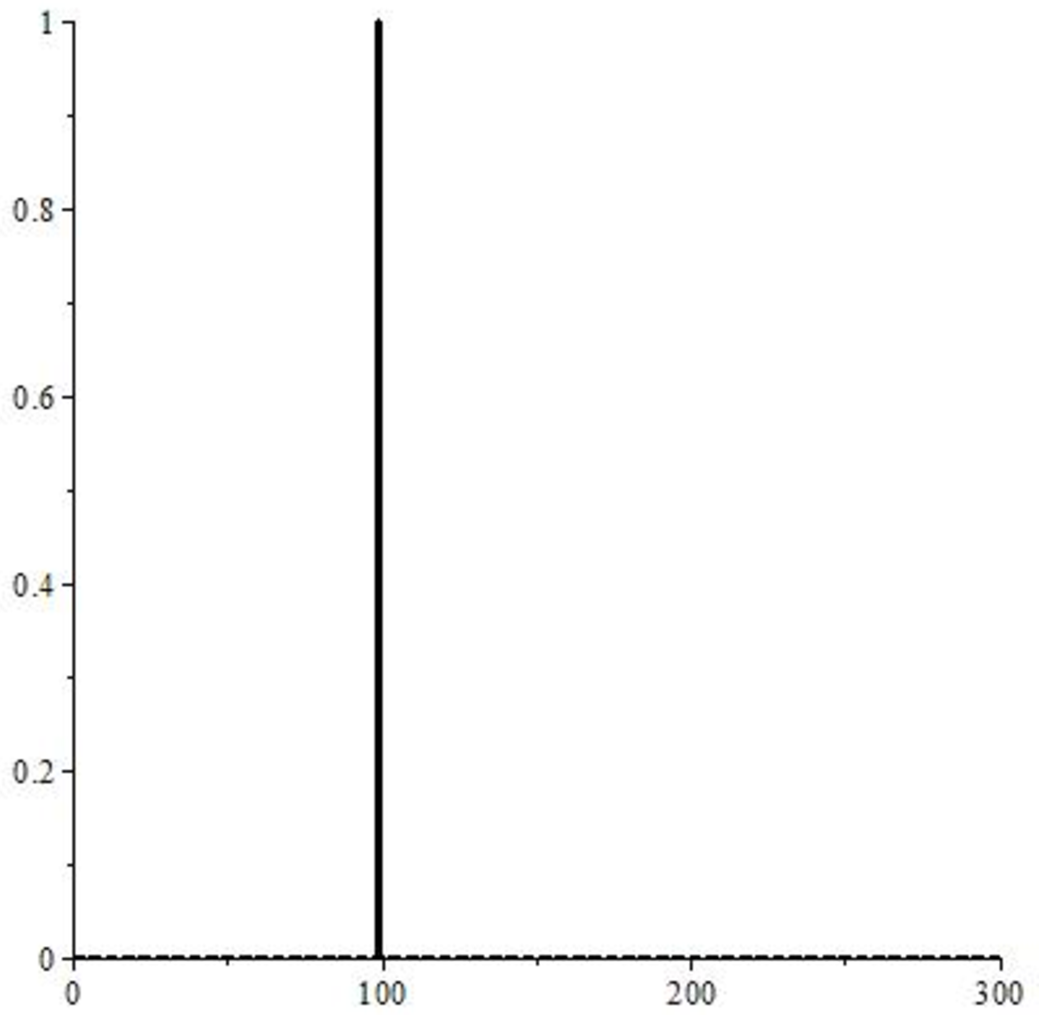}\;
  \includegraphics[width=3cm,frame]{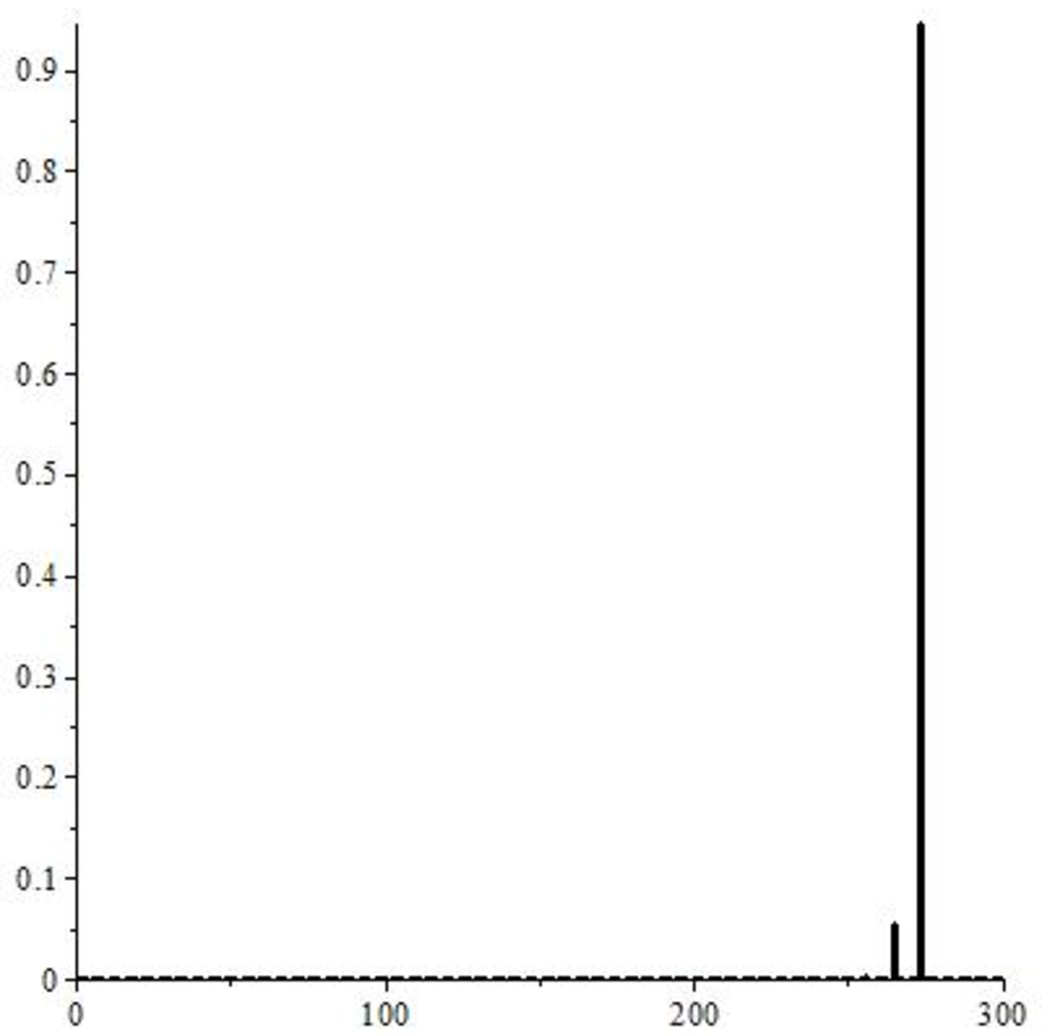}\;
  \includegraphics[width=3cm,frame]{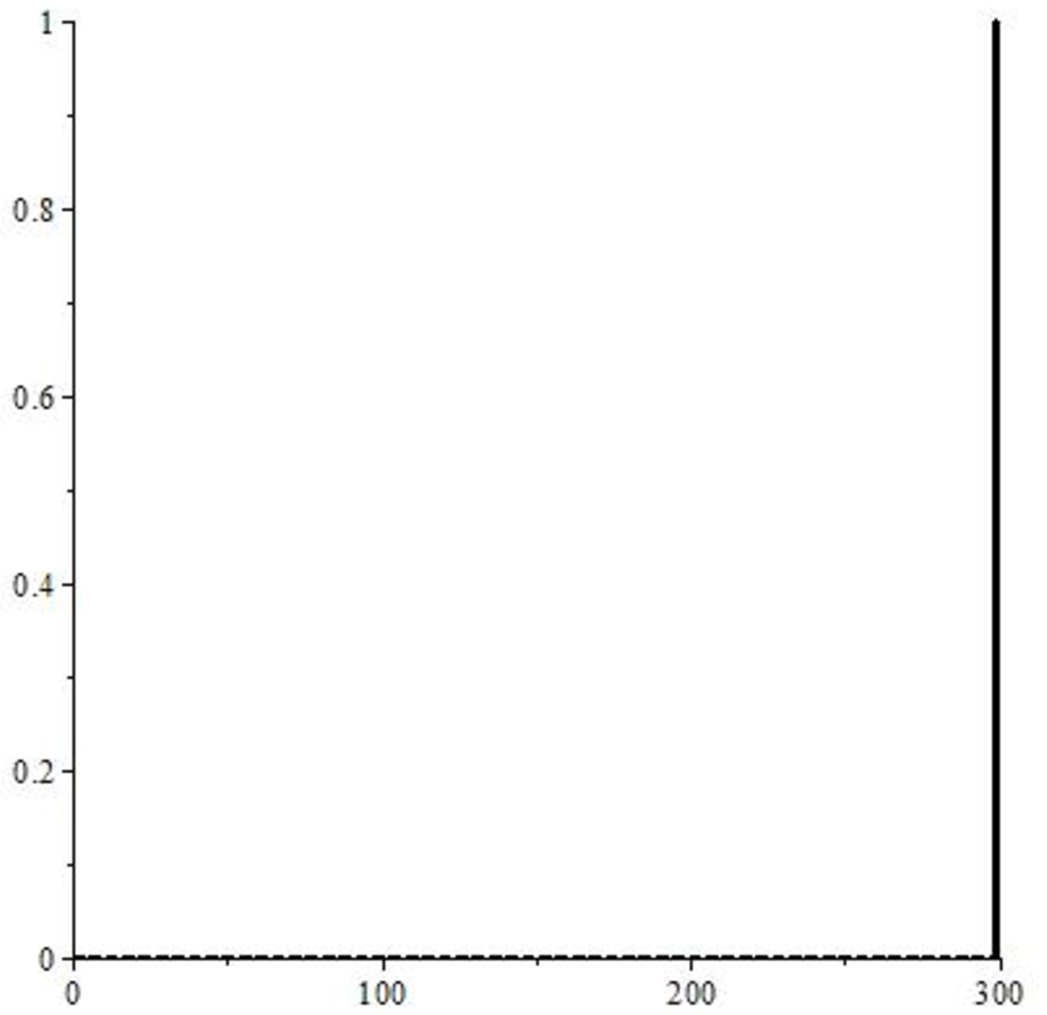}
  \caption{The output of the algorithm \texttt{GIFSMeasureDraw}($\mS$) after 1, 5 and 15 iterations, having a fairly high definition of $300$ pixels. }\label{Ex_Miculescu_Histo}
\end{figure}

The output suggests that $\mu_{\mathcal{S}}=\delta_{1}$! We can verify that by direct examination as follows.

Consider a continuous function $f:[0,1] \to \mathbb{R}$ then,
$$\int f(z) d M_{\mathcal{S}}(\delta_{1}\times \delta_{1})= \int_x\int_y p_{1}(x, y) f(\phi_1(x,y))+ p_{2}(x, y) f(\phi_1(x,y)) d \delta_{1}(x)d\delta_{1}(y)=$$
$$= p_{1}(1, 1) f(\phi_1(1,1))+ p_{2}(1, 1) f(\phi_1(1,1))= p_{1}(1, 1) f\left(\frac{1}{4}+\frac{1}{4}\right)+ p_{2}(1, 1) f\left(\frac{1}{4}+\frac{1}{4} +\frac{1}{2}\right)=$$
$$=f(1)= \int f(z) d \delta_{1}(z)),$$
because $p_{1}(1, 1)=0$. Therefore, $M_{\mathcal{S}}(\delta_{1}\times \delta_{1})=\delta_{1}$ meaning that $\mu_{\mathcal{S}}=\delta_{1}$.
\end{example}

\begin{example}\label{ex_conze}
   In this case we consider a negative case from \cite{CR}. In this case the authors consider the doubling map $T(x)=2x\mod 1$ on the interval and studies the solutions of the equation
   $$P_{u} f(x)=u\left(\frac{x}{2}\right) f\left(\frac{x}{2}\right)+u\left(\frac{x}{2}+\frac{1}{2}\right) f\left(\frac{x}{2}+\frac{1}{2}\right)
 $$ for a given potential $u:[0,1] \to \mathbb{R}$ and the $P_{u}$-harmonic functions, i. e. $P_{u}(f)=f$. Then they characterizes the dual solutions $P_{u}^*(\mu)=\mu$, which he calls invariant measures.

It follows that this problem is the same as the problem of finding the Hutchinson measure for the IFSpdp $\mS=(X,(\phi_j)_{j=1}^2, (p_j)_{j=1}^2)$ where
    \[
   \left\{
     \begin{array}{ll}
       \phi_1(x)= &  \frac{x}{2}  \\
       \phi_2(x)= &  \frac{x}{2} +\frac{1}{2}.
     \end{array}
   \right.
   \]
Given a function, say $u(t)=\cos^2(3 \pi t)$, there are considered probabilities $p_{1}(x )= u(\phi_1(x))=u(x/2)$ and $p_{2}(x )=u(\phi_2(x))= u(x/2 + 1/2)$.
\end{example}
\begin{figure}[ht!]
  \centering
  \begin{tabular}{cc}
  \includegraphics[width=3cm,frame]{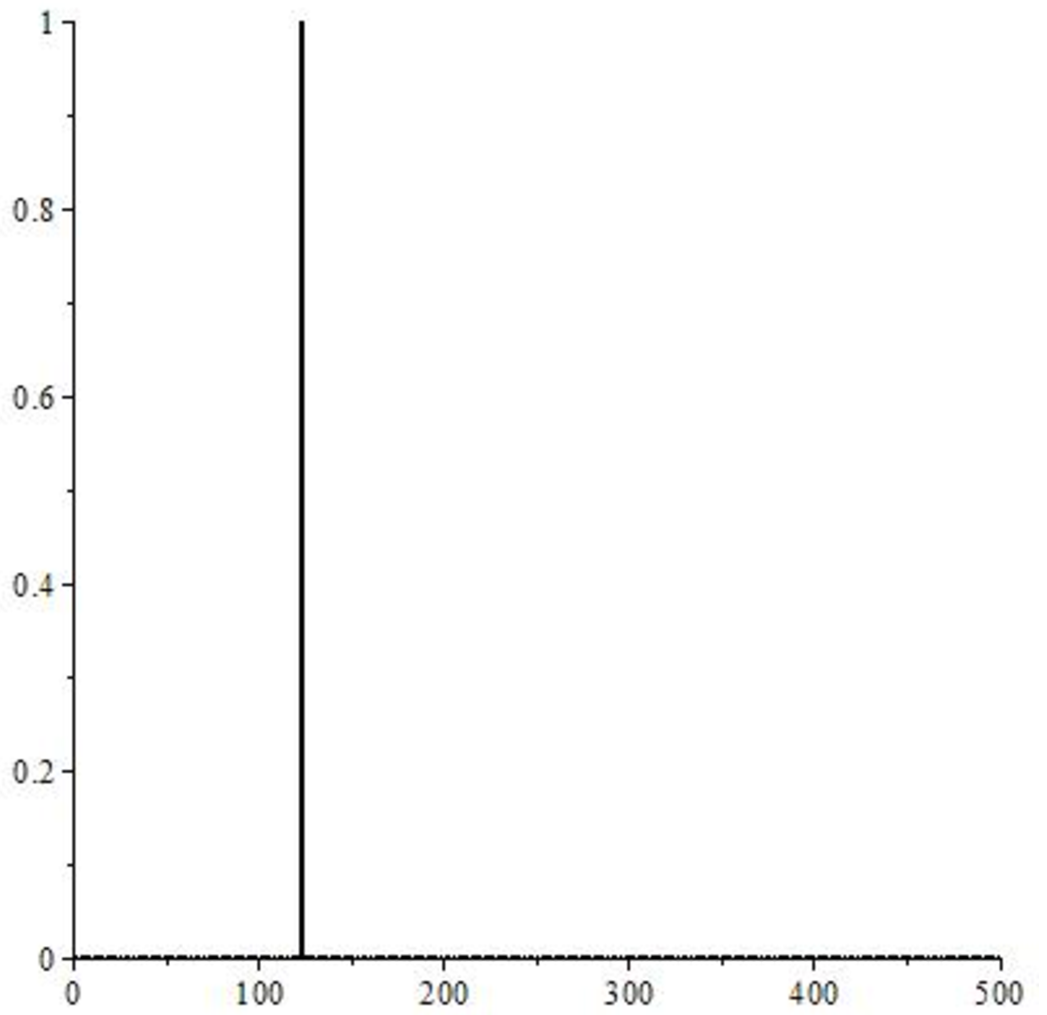}&
  \includegraphics[width=3cm,frame]{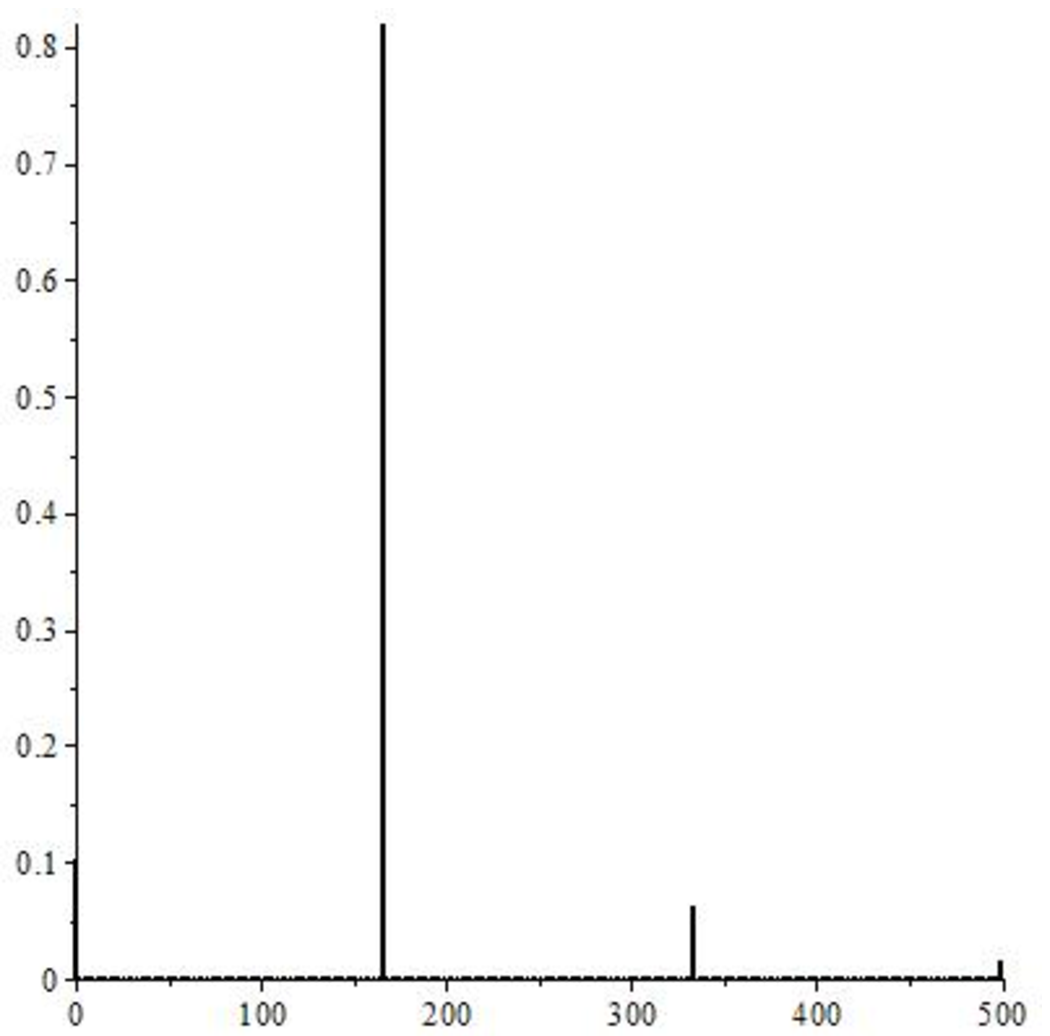}\\
  \includegraphics[width=3cm,frame]{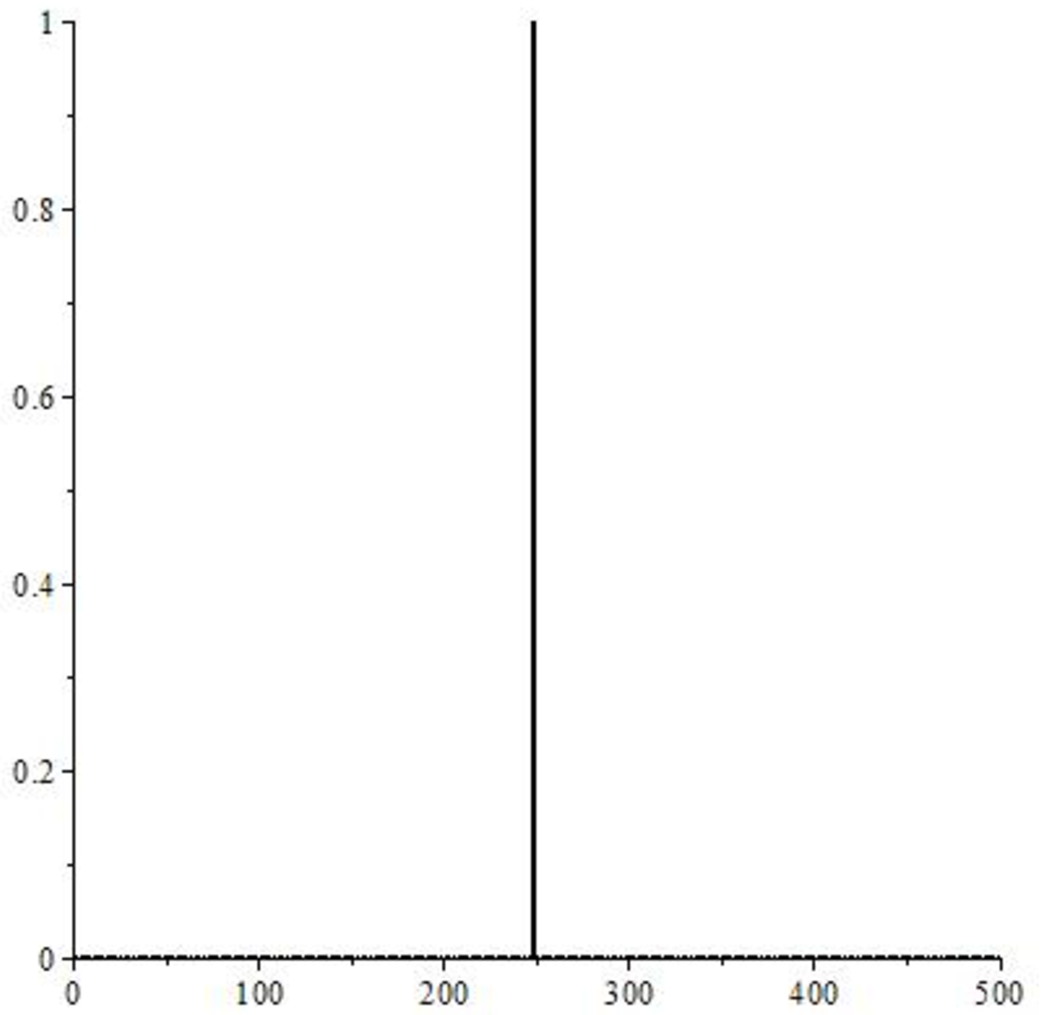}&
  \includegraphics[width=3cm,frame]{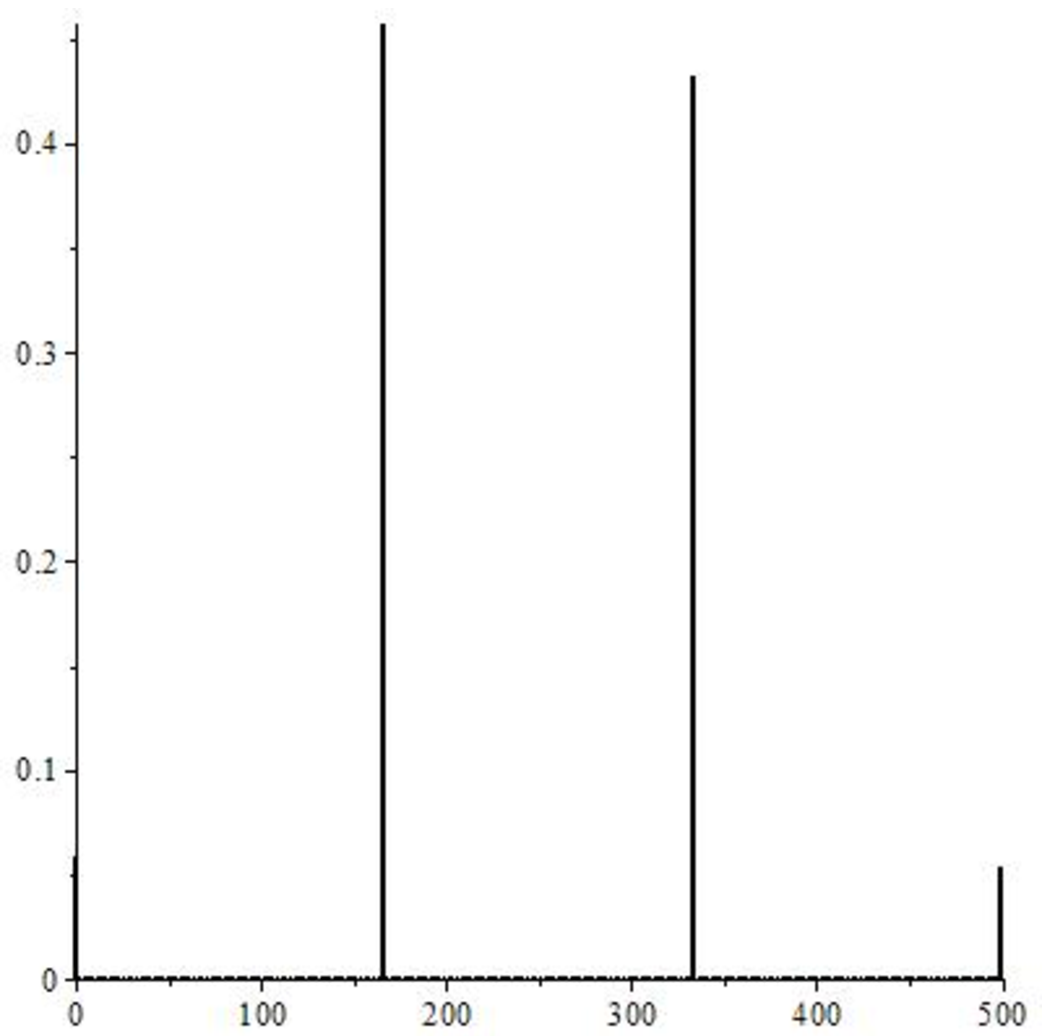}
  \end{tabular}
  \caption{The output of algorithm \texttt{GIFSMeasureDraw}($\mS$) after 1 and 100 iterations, having a fairly high definition of $500$ pixels. In the first row, after $100$ iterations the discrete measure is $\mu:= 0.1030321845 \, \delta_{0} + 0.8199775187 \, \delta_{\frac{1}{3}}+ 0.06209698542 \, \delta_{\frac{2}{3}}+ 0.01471437940 \, \delta_{1}$; in the second row, after $100$ iterations the discrete measure is $\mu:= 0.05792616463 \,  \delta_{0} + 0.4568641232 \, \delta_{\frac{1}{3}}+ 0.4317603436 \, \delta_{\frac{2}{3}}+ 0.05323785154 \, \delta_{1}$.}\label{conze_500_100_s4}
\end{figure}
In \cite[example 3 b]{CR}, the authors show that the invariant measures (Hutchinson measures for the IFSpdp) are the probabilities supported on the $T$-periodic cycles $\{0\}$, $\{1\}$ and $\{\frac{1}{3}, \frac{2}{3}\}$.  In that case our Markov operator cannot be a contraction because there is more than one Hutchinson measure; however, we can still use \texttt{GIFSMeasureDraw}($\mS$) to see if it converges to any measure. We show in Figure~\ref{conze_500_100_s4} a selection phenomenon: the iteration seems to converge to some combination $\mu:= a \delta_{0} + b \delta_{\frac{1}{3}}+c \delta_{\frac{2}{3}}+d \delta_{1}$ depending on the initial measure we choose.

\section*{Acknowledgments}
We are grateful to the referees whose careful reading improved the paper.

\end{document}